\documentclass[a4paper]{amsart}

\usepackage[backrefs,lite]{amsrefs}
\usepackage{url}

\makeatletter
\DeclareFontEncoding{LS1}{}{}
\DeclareFontSubstitution{LS1}{stix}{m}{n}
\DeclareMathAlphabet{\skr}{LS1}{stixscr}{m}{n}
\makeatother

\usepackage{amssymb}              
\usepackage{wasysym}              
\usepackage{lmodern,bm}              
\usepackage{stmaryrd}
\usepackage[all]{xy}
\usepackage{tikz}
\usepackage{graphicx}
\usetikzlibrary{calc} 

\usepackage{amsmath,amssymb,amsthm}
\usepackage{tikz, pgfplots, pgfplotstable}
\usetikzlibrary{calc}
\usetikzlibrary{decorations.markings}
\usetikzlibrary{positioning,arrows}

\CompileMatrices





\setcounter{tocdepth}{1}

\newtheorem{theorem}{Theorem}[section]

\newtheorem{proposition}[theorem]{Proposition}
\newtheorem{corollary}[theorem]{Corollary}

\theoremstyle{definition}

\newtheorem{definition}[theorem]{Definition}
\newtheorem{construction}[theorem]{Construction}

\numberwithin{equation}{theorem}


\def\Cl{{\rm Cl}}
\def\cl{{\rm cl}}

\def\CC{{\mathbb C}}
\def\KK{{\mathbb K}}
\def\TT{{\mathbb T}}
\def\ZZ{{\mathbb Z}}

\def\QQ{{\mathbb Q}}
\def\PP{{\mathbb P}}

\def\conv{{\rm conv}}

\def\im{{\rm im}}

\def\bangle#1{{\langle #1 \rangle}}

\def\Pic{{\rm Pic}}

\def\Spec{{\rm Spec}}

\def\cone{{\rm cone}}

\def\lcm{{\rm lcm}}
\def\im{{\rm im}}

\DeclareMathOperator{\Mov}{\mathrm{Mov}}

\title[Full intrinsic quadrics of dimension two]{Full intrinsic quadrics of dimension two}

\author[J\"urgen Hausen, Katharina Kir\'{a}ly]{J\"urgen Hausen, Katharina Kir\'{a}ly}

\address{Mathematisches Institut, Universit\"at T\"ubingen,
Auf der Morgenstelle 10, 72076 T\"ubingen, Germany}
\email{juergen.hausen@uni-tuebingen.de}

\address{Mathematisches Institut, Universit\"at T\"ubingen,
Auf der Morgenstelle 10, 72076 T\"ubingen, Germany}
\email{kaki@math.uni-tuebingen.de}

\subjclass[2010]{14L30,14J26}

\sloppy

\begin{document}

\begin{abstract}
A full intrinsic quadric is a normal complete
variety with a finitely generated Cox ring
defined by a single quadratic relation of
full rank.
We describe all surfaces of this type
explicitly via local Gorenstein indices.
As applications, we present upper and
lower bounds in terms of the Gorenstein
index for the degree, the log canonicity
and the Picard index.
Moreover, we determine all full intrinsic
quadric surfaces admitting a K\"ahler-Einstein
metric.
\end{abstract}

\maketitle

\section{Introduction}

The purpose of this article is to structure 
and explore the (infinite) playground of 
full intrinsic quadric (algebraic)
surfaces $X$, defined over an algebraically
closed field~$\KK$ of characteristic zero.
The name \emph{full intrinsic quadric}
refers to the property that the Cox ring
of $X$ is defined by a single homogeneous
quadratic relation of full rank; see~\cite{BeHa}.
Intrinsic quadrics exist as well in higher
dimensions and form an explicit example class
closely beneath the toric varieties which
are characterized by having a polynomial
ring as Cox ring; see~\cites{Bou, FaHa, Hi}
for sample work.

As we will see in Theorem~\ref{thm:fiqs-cr},
every full intrinsic quadric surface $X$ is
projective, normal, $\QQ$-factorial, rational
and allows a non-trivial action of the
multiplicative action of $\KK^*$.
This allows us to realize $X$ as a
surface in a specific toric threefold~$Z$.
More precisely, consider integral $3 \times n$
matrices $P$ of the form

{\small
\[
\left[
\begin{array}{cccc}
-1 & -1 & 2 & 0
\\
-1 & -1 & 0 & 2
\\
a  & b  & 1 & 1 
 \end{array}
\right],
\quad
\left[
\begin{array}{ccccc}
-1 & -1 & 1 & 1 & 0
\\
-1 & -1 & 0 & 0 & 2
\\
a  & b  & 0 & c & 1
 \end{array}
\right],
\quad
\left[
\begin{array}{cccccc}
-1 & -1 & 1 & 1 & 0 & 0
\\
-1 & -1 & 0 & 0 & 1 & 1
\\
a  & b  & 0 & c & 0 & d
 \end{array}
\right].
\]
}

\noindent
Given such $P$, fix a complete fan
$\Sigma$ in $\ZZ^3$ having the columns
of $P$ as its primitive ray generators,
let $Z$ be the associated toric threefold
and set 
\[
X
\ := \
\overline{ \{t \in \TT^3; \ 1+z_1+z_2 = 0\} }
\ \subseteq \
Z,
\]
where $\TT^3 \subseteq Z$ is the standard 3-torus
with the coordinates $z_1,z_2,z_3$.
Then $X$ is a full intrinsic quadric surface.
The Picard number of $X$ is $\rho(X) = n-3$
and~$X$ comes with the effective $\KK^*$-action
given on $X \cap \TT^3$ by
\[
t \cdot z \ = \ (z_1,z_2,tz_3).
\]

Our main results,
Theorems~\ref{thm:fiqs2matrix-rho1},~\ref{thm:fiqs2matrix-rho2}
and~\ref{thm:fiqs2matrix-rho3},
provide an \emph{explicit and redundance free
presentation of all full intrinsic quadric surfaces}
via their defining matrices $P$ in terms of local
Gorenstein indices and local class group orders
of the possibly singular points:
for each of the possible Picard numbers
$\rho(X)=1,2,3$, we find four infinite series,
each depending on two local Gorenstein indices,
$\iota^+$, $\iota^-$
and on $\rho(X)-1$ local class group orders,
bounded by $\iota^+$, $\iota^-$.

\goodbreak

All full intrinsic quadric surfaces $X$ turn out
to be log del Pezzo surfaces;
see Proposition~\ref{prop:fiqs2fano}.
We use our main results to study their geometry.
For instance,
Corollaries~\ref{cor:anticandeg},~\ref{cor:maxlc}
and~\ref{cor:picind} deliver upper and
lower bounds on the degree~$\mathcal{K}_X^2$,
the log canonicity $\varepsilon_X$
and the Picard index $\mathfrak{p}_X$, all in terms
of the Gorenstein index~$\iota_X$; in particular,
we obtain 
\[
\frac{2}{\iota_X} \le \mathcal{K}_X^2 \le \frac{9}{2} + \frac{9}{2\iota_X},
\quad
\frac{2}{\iota_X} \le \varepsilon_X \le \frac{3}{\sqrt{\iota_X}},
\quad  
\iota_X \le \mathfrak{p}_X \le 2\iota_X^2(4\iota_X-1)^2(2\iota_X-1).
\]
Another outcome of Theorems~\ref{thm:fiqs2matrix-rho1},
~\ref{thm:fiqs2matrix-rho2} and~\ref{thm:fiqs2matrix-rho3}
is the following explicit (infinite) list of all
full intrinsic quadric complex surfaces admitting
a K\"ahler-Einstein metric;
see Corollary~\ref{cor:ke-metrics} for the precise
formulation and more background.

\begin{corollary}
The full intrinsic quadric complex surfaces admitting
a K\"ahler-Einstein metric are precisely those constructed
from a matrix $P$ of the shape

{\small
\[
\begin{array}{lll}
\begin{array}{l}
\rho = 1, \ 2 \nmid \iota:
\\
\  
\end{array}
&\hspace*{.2cm}&
\begin{array}{l}
\rho = 3,
\quad
2 \nmid \iota,
\quad
-\iota+1 \, \le \, c \, \le \, -2, 
\\
\max(c, -2\iota - 2c)  \le  d \le  -\iota -1-c:
\end{array}
%
\\[10pt]
\ 
\left[
\begin{array}{cccc}
-1 & -1 & 2 & 0 
\\
-1 & -1 & 0 & 2
\\
\iota-1 & -\iota-1 & 1 & 1 
\end{array}
\right],
&\hspace*{1.2cm}&
\ 
\left[
\begin{array}{cccccc}
-1 & -1 & 1 & 1 & 0 & 0
\\
-1 & -1 & 0 & 0 & 1 & 1
\\
\iota & -\iota -c-d & 0 & c & 0 & d 
\end{array}
\right],
\\[30pt]
\begin{array}{l}
\rho = 1, \ 4 \mid \iota:
\\
\
\end{array}
&\hspace*{1.2cm}&
\begin{array}{l}
\rho=3,
\quad
-2\iota + 1 \, \le \, c \, \le \, -2,
\\
\max(c, -4\iota-2c) \le d \le -2\iota -1-c:
\end{array}
\\[10pt]
\ 
\left[
\begin{array}{cccc}
-1 & -1 & 2 & 0 
\\
-1 & -1 & 0 & 2
\\
\frac{\iota}{2}-1 & -\frac{\iota}{2}-1 & 1 & 1 
\end{array}
\right],
&\hspace*{1.2cm}&
\ 
\left[
\begin{array}{cccccc}
-1 & -1 & 1 & 1 & 0 & 0
\\
-1 & -1 & 0 & 0 & 1 & 1
\\
2\iota & -2\iota -c -d & 0 & c & 0 & d 
\end{array}
\right].
\end{array}    
\]
}

\medskip
\noindent
Here, $\rho = 1,3$ is the Picard number
and $\iota \in \ZZ_{\ge 1}$ the Gorenstein index of the
resulting full intrinsic quadric complex surface
$X$ arising from the matrix $P$.
\end{corollary}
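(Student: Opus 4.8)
The plan is to invoke the explicit classification from the main theorems, identify the Kähler–Einstein criterion for $\KK^*$-surfaces, and match the two conditions combinatorially.

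By Proposition~\ref{prop:fiqs2fano} every full intrinsic quadric surface $X$ is log del Pezzo, and Theorems~\ref{thm:fiqs2matrix-rho1}, \ref{thm:fiqs2matrix-rho2} and~\ref{thm:fiqs2matrix-rho3} present it, for each Picard number $\rho(X) = 1,2,3$, by one of four explicit series of defining matrices $P$ carrying the canonical $\KK^*$-action $t\cdot z = (z_1,z_2,tz_3)$. Over $\CC$, a Kähler--Einstein metric on the Fano variety $X$ exists if and only if $X$ is K-polystable, and, the action being present, it suffices to test K-polystability against $\KK^*$-equivariant special test configurations; the precise statement is recorded in Corollary~\ref{cor:ke-metrics}. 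The first step is to turn this into a condition that is read off directly from $P$.

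First I would translate K-polystability into two conditions on the matrix. The vanishing of the Futaki invariant of the generating field of the $\KK^*$-action is a balancing (barycenter-at-the-origin) condition which, for our hypersurfaces, collapses to the single linear equation $\sum_{j} P_{3j} = 0$ on the third row; this is exactly what produces the third rows of the listed matrices, as one checks from $(\iota-1)+(-\iota-1)+1+1 = 0$ and $\iota+(-\iota-c-d)+c+d = 0$. The residual K-stability then becomes an inequality on the moment data, and this inequality will cut out the admissible ranges of the local parameters $c$ and $d$ in the $\rho = 3$ families.

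Next I would feed the criterion into the four series of the classification one Picard number at a time, thereby establishing both inclusions simultaneously. For $\rho = 1$ the balancing equation, together with the redundancy-free normalisation and the prescribed Gorenstein index $\iota$, becomes an arithmetic condition; I expect a balanced representative of Gorenstein index $\iota$ to exist precisely when $2 \nmid \iota$ or $4 \mid \iota$, which yields the two $\rho = 1$ matrices and excludes the regime $\iota \equiv 2 \pmod 4$. For $\rho = 3$ the balancing equation fixes $a$ and $b$ in terms of $c, d$ and $\iota$, while the K-stability inequality delivers the stated bounds on $c$ and $d$. For $\rho = 2$ I would show that no member of the family is at once balanced and K-stable, so that Picard number two contributes nothing.

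The main obstacle is twofold. First, rendering the Kähler--Einstein criterion genuinely computable from $P$ requires pinning down the relevant $\KK^*$-equivariant test configurations and checking that they suffice; this is where the structure theory of complexity-one $T$-varieties and the anticanonical complex carry the conceptual weight. Second, the arithmetic that separates the divisibility regimes $2 \nmid \iota$ and $4 \mid \iota$ and that produces the inequalities on $c$ and $d$ must be organised so that the resulting solution set reproduces the listed matrices exactly, with no reappearance of the redundancies already removed in the main theorems.
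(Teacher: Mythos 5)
Your proposal follows essentially the same route as the paper: reduce to the normal forms of Theorems~\ref{thm:fiqs2matrix-rho1}--\ref{thm:fiqs2matrix-rho3}, then apply the combinatorial K-stability criterion for complexity-one $\KK^*$-surfaces via the special toric degenerations, testing the barycenter of each moment polytope. Your key observation that the Futaki/balancing condition collapses to the vanishing of the third-row sum of $P$ is exactly what the paper's computations yield: the first barycenter coordinate is in each case a nonzero multiple of $a+b+c+d$ (resp.\ $a+b+2$, $a+b+c+1$), so this part of your plan is sound and matches the paper.

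However, the proposal stops short precisely where the content of the statement lies, namely at the second barycenter coordinate $b_{\kappa,2}>0$. Everything that distinguishes the three Picard numbers comes from this positivity condition, and you leave it as an ``obstacle'' rather than resolving it. Concretely: (i) for $\rho=2$ your plan is to show that no member is ``at once balanced and K-stable,'' but the actual mechanism is that balancing forces the barycenter to be the origin, $b_2=(0,0)$, so positivity fails identically --- balanced members do exist, and without computing $\mathcal{B}_2$ explicitly you cannot see this degeneration; (ii) for $\rho=3$ the stated ranges of $c,d$ do not come from the balancing equation alone but from the three inequalities $b>0$, $b+d<0$, $a+d>0$ extracted from $b_{\kappa,2}>0$ for all three special degenerations $\kappa=0,1,2$, intersected with the normalization constraints of Construction~\ref{constr:matrix2fiqs-rho3}; (iii) for $\rho=1$ the positivity is automatic ($b_{\kappa,2}=1/6$), and the dichotomy $2\nmid\iota$ versus $4\mid\iota$ follows from the parity analysis of $\iota^\pm$ in Theorem~\ref{thm:fiqs2matrix-rho1} once balancing forces $\iota^+=\iota^-$ --- you correctly guess this but do not derive it. So the architecture is right, but the proof as written is a plan whose decisive computations (the explicit polytopes $\mathcal{A}_\kappa$, $\mathcal{B}_\kappa$ and their barycenters) remain to be carried out.
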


Finally, our description yields a filtration of the
whole infinite class of full
intrinsic quadric surfaces into finite subclasses
by bounding the Gorenstein index.
This allows, for instance, counting results
as the following.

\goodbreak

\begin{corollary}
Up to isomorphy, there are precisely
15$\,$538$\,$339 full intrinsic
quadric surfaces of Gorenstein index
at most 200.

\begin{enumerate}
\item
In Picard number one, we find in total 883 full intrinsic
quadric surfaces of Gorenstein index at most 200, filtered
as follows:
\begin{center}
\begin{tikzpicture}[scale=0.7]
\begin{axis}[
    xmin = 0, xmax = 200,
    ymin = 0, ymax = 900,
    width = \textwidth,
    height = 0.53\textwidth,
    xtick distance = 20,
    ytick distance = 100,
    xlabel = {Gorenstein index $\iota$},
    ylabel = {\# fiqs of Gorenstein index $\le \iota$}
]
\addplot[mark size=1pt, only marks] table {plot-data-fiqs-rho-1.txt};
\end{axis}
\end{tikzpicture}
\end{center}
Exactly 150 full intrinsic quadric complex surfaces of
Picard number one and Gorenstein index at most 200 admit a 
K\"ahler-Einstein metric.

\goodbreak

\item
In Picard number two, we find in total 71$\,$198 full intrinsic
quadric surfaces of Gorenstein index at most 200, filtered
as follows:
\begin{center}
\begin{tikzpicture}[scale=0.7]
\begin{axis}[
    xmin = 0, xmax = 200,
    ymin = 0, ymax = 75000,
    width = \textwidth,
    height = 0.53\textwidth,
    xtick distance = 20,
    ytick distance = 10000,
    xlabel = {Gorenstein index $\iota$},
    ylabel = {\# fiqs of Gorenstein index $\le \iota$}
]
\addplot[mark size=1pt, only marks] table {plot-data-fiqs-rho-2.txt};
\end{axis}
\end{tikzpicture}
\end{center}
In Picard number two, there are no full intrinsic
quadric complex surfaces at all admitting a
K\"ahler-Einstein metric.

\item
In Picard number three, we find in total 15$\,$466$\,$258 full intrinsic
quadric surfaces of Gorenstein index at most 200, filtered
as follows:
\begin{center}
\begin{tikzpicture}[scale=0.7]
\begin{axis}[
    xmin = 0, xmax = 200,
    ymin = 0, ymax = 16000000,
    width = \textwidth,
    height = 0.53\textwidth,
    xtick distance = 20,
    ytick distance = 2000000,
    xlabel = {Gorenstein index $\iota$},
    ylabel = {\# fiqs of Gorenstein index $\le \iota$}
]
\addplot[mark size=1pt, only marks] table {plot-data-fiqs-rho-3.txt};
\end{axis}
\end{tikzpicture}
\end{center}
Exactly 1$\,$006$\,$633 full intrinsic quadric complex surfaces of
Picard number three and Gorenstein index at most 200
admit a K\"ahler-Einstein metric.
\end{enumerate}
\end{corollary}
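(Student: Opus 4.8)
The plan is to reduce the counting problem to a finite arithmetic enumeration by exploiting the redundance-free parametrizations of Theorems~\ref{thm:fiqs2matrix-rho1},~\ref{thm:fiqs2matrix-rho2} and~\ref{thm:fiqs2matrix-rho3}. By those results, for each Picard number $\rho \in \{1,2,3\}$ the isomorphy classes of full intrinsic quadric surfaces are in bijection with the admissible parameter tuples defining the matrices $P$ in the four series: the two local Gorenstein indices $\iota^+, \iota^-$ together with $\rho-1$ local class group orders, the latter bounded in terms of $\iota^+, \iota^-$. Since the presentation is redundance free, distinct admissible tuples give non-isomorphic surfaces, so counting surfaces amounts to counting tuples; all these surfaces are $\QQ$-factorial by Theorem~\ref{thm:fiqs-cr}, hence $\QQ$-Gorenstein, and their Gorenstein index $\iota_X$ is well defined.

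First I would make $\iota_X$ explicit in the defining data. Reading off the local Gorenstein indices at the (at most finitely many) singular points from the matrix $P$ and using that $\iota_X$ is the least common multiple of these local indices, one obtains for each series a closed formula $\iota_X = \iota_X(\iota^+,\iota^-,\dots)$ in which $\iota_X$ grows at least linearly in each of $\iota^+$ and $\iota^-$. Consequently the constraint $\iota_X \le 200$ forces $\iota^+, \iota^-$ into a bounded range, and the accompanying local class group orders are then bounded as well; thus only finitely many tuples survive in each series. The actual tally is a purely finite computation: for each $\rho$ and each series, loop over the admissible $(\iota^+,\iota^-)$ with $\iota_X \le 200$, loop over the permitted class group orders, and count. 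Summing over the four series in each Picard number yields the totals $883$, $71\,198$ and $15\,466\,258$, and adding these gives $15\,538\,339$.

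For the K\"ahler-Einstein statement I would intersect the above enumeration with the explicit conditions of Corollary~\ref{cor:ke-metrics}. In Picard numbers one and three these conditions single out the displayed sub-series, cut out by the parity of $\iota$ and the inequalities on $c$ and $d$; restricting the same loop to these sub-series and again imposing $\iota_X \le 200$ produces $150$ and $1\,006\,633$. In Picard number two Corollary~\ref{cor:ke-metrics} rules out every surface, so the corresponding count is $0$.

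I expect the main obstacle to be the explicit determination of the Gorenstein-index formula $\iota_X(\iota^+,\iota^-,\dots)$ for every series in a form compatible with the normal forms of the main theorems, so that the bound $\iota_X \le 200$ translates into precisely the right parameter ranges and each surface is counted once. Once this formula and the induced ranges are fixed, the remaining summation carries no conceptual difficulty and is best verified by a direct computer enumeration.
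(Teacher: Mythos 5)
Your proposal is correct and follows essentially the same route the paper takes: the counts are obtained by enumerating the redundance-free parameter sets $S_{ij}(\rho,\iota)$ of Theorems~\ref{thm:fiqs2matrix-rho1},~\ref{thm:fiqs2matrix-rho2},~\ref{thm:fiqs2matrix-rho3} over $\iota\le 200$ (finiteness being immediate since $\iota^\pm$ divide $\iota=\lcm(\iota^+,\iota^-)$ and $c,d$ are explicitly bounded by $\iota^\pm$), and the K\"ahler--Einstein counts by intersecting with the explicit list of Corollary~\ref{cor:ke-metrics}. The only loose point is your claim that $\iota_X$ ``grows at least linearly'' in $\iota^+,\iota^-$; the correct and simpler observation is that $\iota^\pm\mid\iota_X$, which already bounds all parameters.
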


We assume the reader to be familiar with the very basics of
toric geometry, in particular the construction of a
toric variety from its defining fan, the orbit decomposition,
toric divisors and homogeneous coordinates; see for
instance~\cites{Dan,Ful,CoLiSc}.
Theorems~\ref{thm:fiqs2matrix-rho1},~\ref{thm:fiqs2matrix-rho2}
and~\ref{thm:fiqs2matrix-rho3} are formulated and proven in
Sections~\ref{sec:rho-1},~\ref{sec:rho-2} and~\ref{sec:rho-3},
respectively.
The considerations follow a common pattern but differ 
in the details; for convenience, we present the complete
arguments in each case.
The geometric applications are given in Section~\ref{sec:geom-app}.
The defining matrices for the full intrinsic quadrics
are available under~\cite{LDPDB} for Gorenstein index
up to 200 in Picard numbers one, two
and for Gorenstein index up to 40 in Picard number
three.

\section{Full intrinsic quadric surfaces allow a $\KK^*$-action}

In this section we show that every full intrinsic
quadric surface is rational, $\QQ$-factorial,
projective and admits a non-trivial $\KK^*$-action.
This will allow us to work with the approach
to $\KK^*$-surfaces provided by~\cites{HaHe,HaSu};
see also~\cite{ArDeHaLa}*{Sec.~5.4}.
First let us give a precise definition
of a full intrinsic quadric; see also~\cite{BeHa}*{Sec.~9}.

\begin{definition}
A \emph{full intrinsic quadric} is a normal
complete variety $X$ with finitely generated
divisor class group and Cox ring of the form
\[
\mathcal{R}(X)
\ = \
\bigoplus_{\Cl(X)} \Gamma(X,\mathcal{O}(D))
\ = \ 
\KK[T_1, \ldots, T_n] / \bangle{g}
\]
with $\Cl(X)$-homogeneous generators
$T_1, \ldots, T_n \in \mathcal{R}(X)$
and a $\Cl(X)$-homogeneous
quadric $g \in \KK[T_1, \ldots, T_n]$
of full rank.
\end{definition}

\goodbreak

\begin{theorem}
\label{thm:fiqs-cr}
Let $X$ be a full intrinsic quadric surface.
Then $X$ is $\QQ$-factorial, rational, projective
and admits an effective $\KK^*$-action.
Moreover, the Picard number of $X$ satisfies 
$\rho(X) \le 3$ and its Cox ring allows a
$\Cl(X)$-graded presentation as
\[
\mathcal{R}(X)
\ \cong \
\begin{cases}
\KK[T_1, \ldots, T_4] / \bangle{T_1T_2+T_3^2+T_4^2},
&
\rho(X) = 1,
\\
\KK[T_1, \ldots, T_5] / \bangle{T_1T_2+T_3T_4+T_5^2},
&
\rho(X) = 2,
\\
\KK[T_1, \ldots, T_6] / \bangle{T_1T_2+T_3T_4+T_5T_6},
&
\rho(X) = 3.
\end{cases}
\]
\end{theorem}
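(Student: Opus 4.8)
The plan is to exploit that $\mathcal R(X)=\KK[T_1,\dots,T_n]/\bangle{g}$ is a quadric hypersurface and to run everything through the characteristic quasitorus quotient. First I would record the dimension bookkeeping: $\mathcal R(X)$ is an integral, normal complete intersection of dimension $n-1$, and for any variety with finitely generated Cox ring one has $\dim\mathcal R(X)=\dim X+\mathrm{rk}\,\Cl(X)$. As $X$ is a surface, this gives $\mathrm{rk}\,\Cl(X)=n-3$. I write $\overline X:=\Spec\mathcal R(X)=\V(g)\subseteq\KK^n$ for the total coordinate space, a full-rank affine quadric cone, put $H:=\Spec\KK[\Cl(X)]$ for the characteristic quasitorus, and realise $X=\widehat X\git H$ as the good quotient of the set of semistable points.

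Next I would bring $g$ into graded normal form. As $g$ is $\Cl(X)$-homogeneous of some degree $w$, only the monomials $T_iT_j$ with $\deg T_i+\deg T_j=w$ occur, so the form splits into blocks according to the duality $d\leftrightarrow w-d$ on the occurring degrees. Full rank forces each cross block $V_d\times V_{w-d}$ with $d\ne w-d$ to be a perfect pairing, hence $\dim V_d=\dim V_{w-d}$, and the form on the self-dual block $V_{w/2}$ to be nondegenerate. Graded coordinate changes act inside each degree block, so I can normalise $g$ to a sum of $r$ hyperbolic cross pairs $T_iT_j$ with $\deg T_i\ne\deg T_j$ and $q:=\dim V_{w/2}$ squares $T_i^2$; thus $n=2r+q$. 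A useful clean-up is that two squares of exactly equal $\Cl(X)$-degree recombine into a cross pair via $T_i^2+T_j^2=(T_i+\mathrm i T_j)(T_i-\mathrm i T_j)$, so after reduction the surviving squares carry pairwise distinct degrees differing by $2$-torsion.

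The heart of the matter is the bound $\rho(X)\le 3$ together with the precise three normal forms. Rationally, every self-dual variable has weight $\overline w/2$ while each cross pair contributes dual weights summing to $\overline w$, so all weights lie in the affine span of $\{\overline w/2\}\cup\{\overline d_k\}$, of dimension at most $r+1$. Since the weights generate $\Cl(X)$, this yields $n-3=2r+q-3=\mathrm{rk}\,\Cl(X)\le r+1$, i.e. $q+r\le 4$. To sharpen this to $q+r=3$, equivalently $\rho(X)=r$ with exactly $3-\rho(X)$ squares, I would invoke that $\mathcal R(X)$ must be \emph{factorially} $\Cl(X)$-graded with the $T_i$ pairwise non-associated $\Cl(X)$-primes. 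This is where the torsion of $\Cl(X)$ enters essentially: the (non)factoriality of the quadric cone is controlled by the number of squares and by how their torsion degrees sit inside $\Cl(X)$, and it excludes the degenerate configurations with $q+r=4$; the guiding example is that an all-squares quadric such as $T_1^2+\dots+T_4^2$ would present a smooth quadric surface, whose Cox ring is polynomial rather than a quadric. What survives are exactly $T_1T_2+T_3^2+T_4^2$, $T_1T_2+T_3T_4+T_5^2$ and $T_1T_2+T_3T_4+T_5T_6$, for $\rho(X)=1,2,3$. I expect this factorial-grading bookkeeping, matching the admissible torsion data against the quadric normal form in the spirit of the quadratic Cox rings of \cite{BeHa}, to be the main obstacle.

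Finally I would assemble the geometry. Since $\overline X=\V(g)$ has full rank, its only singular point is the apex, so $H$ acts with finite isotropy on the smooth-in-codimension-one locus $\widehat X$; hence $X$ has at worst quotient singularities and is $\QQ$-factorial, and a $\QQ$-factorial complete variety with finitely generated Cox ring is projective. In particular $\rho(X)=\mathrm{rk}\,\Cl(X)$, which legitimises the identifications above. Rationality follows because the rational cone $\overline X$ dominates $X$ through $\widehat X\git H$, so $X$ is unirational and, being a surface in characteristic zero, rational by Castelnuovo's criterion. For the torus action I would compute the diagonal symmetry group $T_{\overline X}=\{t\in(\KK^*)^n;\ t\cdot g=\chi(t)\,g\}$: in each of the three normal forms it has dimension $\dim H+1$, so it descends to an effective $\KK^*$-action on $X$.
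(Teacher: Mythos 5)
Your overall strategy---graded normal form of the quadric, the rank estimate $2r+q-3=\rho(X)\le r+1$ from the weight configuration, and then elimination of the extremal configurations---coincides with the paper's, which imports the normal form from \cite{FaHa}*{Prop.~2.1} and obtains the same bound ($n/2+m\le 4$ in its notation) by the identical weight-span computation. The gap sits in the elimination step, which is where the real content of the theorem lies, and the mechanism you propose for it does not work. First, the configurations with $r+q=4$ and large Picard number (e.g.\ four hyperbolic pairs and no squares, giving $\rho(X)=5$) are not ruled out by factoriality of the grading: the paper excludes them geometrically, by exhibiting a linear form $u$ on $\Cl_\QQ(X)$ vanishing on all weights except one dual pair, on which it takes opposite signs, so that $\Mov(X)=\bigcap_i\cone(w_j;\ j\ne i)$ is trapped in $\ker(u)$---contradicting full dimensionality of the movable cone on a $\QQ$-factorial projective variety. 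Second, and more seriously, for $\rho(X)=1$ you must also dispose of $(r,q)=(2,0)$ and $(r,q)=(0,4)$, and your guiding example misdiagnoses the latter. The candidate $\KK[T_1,\ldots,T_4]/\bangle{T_1^2+T_2^2+T_3^2+T_4^2}$ does \emph{not} present the smooth quadric surface: the forced grading group is $\ZZ\times\ZZ/2\ZZ\times\ZZ/2\ZZ$, and the paper checks that this grading satisfies every algebraic requirement of a Cox ring presentation (the $T_i$ are pairwise non-associated homogeneous primes and any three of the weights generate $\Cl(X)$). No factorial-grading bookkeeping excludes it; the paper kills it only by observing that the resulting $X$ would be a log del Pezzo surface of Picard number one and Gorenstein index one, and then verifying against the classifications in \cite{HaKeLa} and \cite{ArDeHaLa} that no such surface has this Cox ring---an external, partly computer-assisted input that your outline has no substitute for. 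The case $(r,q)=(2,0)$, i.e.\ $T_1T_2+T_3T_4$ with $\rho(X)=1$, needs yet another argument: there $\bar X$ is toric with the quasitorus $H$ inside its big torus, so $X$ would be toric and its Cox ring polynomial.

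The remaining assertions are handled adequately, with some genuinely alternative routes: rationality via unirationality and Castelnuovo is a legitimate replacement for the paper's appeal to the complexity-one torus action with finitely generated class group, and your computation of the diagonal symmetry group of $g$ having dimension $\dim H+1$ is exactly how the paper produces the residual $\KK^*$-action. One smaller caveat: your derivation of $\QQ$-factoriality from finite isotropy on $\widehat X$ is circular as stated, since realizing $X$ as a geometric quotient of $\widehat X$ already presupposes $\QQ$-factoriality; the paper simply quotes \cite{ArDeHaLa}*{Thm.~4.3.3.5}, which gives $\QQ$-factoriality and projectivity for any normal complete surface with finitely generated Cox ring, and this is also what legitimizes $\rho(X)=\operatorname{rk}\Cl(X)$ before the weight count is run.
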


\begin{proof}
By definition, $X$ is a normal complete surface with
finitely generated Cox ring.
From~\cite{ArDeHaLa}*{Thm.~4.3.3.5} we infer that $X$
is $\QQ$-factorial and projective.
Moreover, by~\cite{FaHa}*{Prop.~2.1}, we have a
$\Cl(X)$-graded presentation
\[
\mathcal{R}(X)
 \cong 
\KK[T_1, \ldots, T_{n+m}]
/
\bangle{g},
\quad
g  =  T_1T_2 + \ldots + T_{n-1}T_n +T_{n+1}^2 + \ldots + T_{n+m}^2.
\]

We use this to show $\rho(X) \le 3$.
Recall from~\cite{ArDeHaLa}*{Cor.~1.6.2.7 and Constr.~1.6.3.1}
that $X$ is the geometric
quotient of an open subset of the total coordinate space
\[
\bar X
\ = \
\Spec \, \mathcal{R}(X)
\ = \
V(g)
\ \subseteq \
\KK^{n+m}.
\]
by the quasitorus $H = \Spec \, \KK[\Cl(X)]$, which,
due to $\QQ$-factoriality of $X$, is of dimension~$\rho(X)$.
Consequently, we have
\[
2 \ = \ \dim(X) \ = \ \dim(\bar X) - \dim(H) \ = \ n+m-1-\rho(X).
\]
The degrees $w_1, \ldots, w_{n+m}$ of $T_1, \ldots, T_{n+m}$
generate $\Cl(X)$. Moreover, the degree $\mu = \deg(g)\in \Cl(X)$
satisfies $\mu = w_i+w_{i+1}$ for $i = 1,3, \ldots, n-1$
and $\mu = 2 w_{n+j}$ for $j = 1, \ldots, m$.
Thus, $\Cl(X)$ is generated by $w_1,w_2,w_3,w_5, \ldots, w_{n-1}$
and we see
\[
n+m-3 \ = \ \rho(X) \ \le \ 2 + \frac{n-2}{2}. 
\]
We conclude $n/2+m \le 4$ and thus $n \le 8$.
Assume $n=8$. Then $m=0$ and $\rho(X)=5$ hold.
Consequently, $(w_1,w_2,w_3,w_5,w_7)$ is a
basis for the rational vector space $\Cl_\QQ(X)$.
Let $u$ be a linear form on $\Cl_\QQ(X)$
such that
\[
\bangle{u,w_1}
=  
\bangle{u,w_2}
=  
\bangle{u,w_3}
=  
\bangle{u,w_5}
=
0,
\qquad
\bangle{u,w_7}
<
0.
\]
Then $u$ annihilates as well $\mu = w_1+w_2$ and thus
also $w_4 = \mu-w_3$ and $w_6=\mu-w_5$. Moreover,
$u$ evaluates positively on $w_8 = \mu - w_7$.
Consequently, computing the cone of movable divisor
classes according to~\cite{ArDeHaLa}*{Prop.~3.3.2.3},
we obtain
\[
\Mov(X)
\ = \
\bigcap_{i=1}^8 \cone(w_j; \ j \ne i)
\ \subseteq \
\ker(u)
\ \subseteq \
\Cl_\QQ(X).
\]
This contradicts to the fact that $\Mov(X)$ is
a cone of full dimension in $\Cl_\QQ(X)$.
We conclude, $n \le 6$. The case $n=6$, $m=1$ and
$\rho(X)=4$ is excluded by the same arguments as used for
$n=8$ and $\rho(X)=5$.

Thus, we have $\rho(X) \le 3$. If $\rho(X)=3$, then
$n=6$ and $m=0$, which leads to third case
in the assertion.
For $\rho(X)=2$, we are left with the choices
$n=4$ with $m=1$ and $n=0,2$. The first one
gives the second case of the assertion
and the other two would produce, similarly
as before, a cone of movable divisors
of dimension less than that of  $\Cl_\QQ(X)$
and hence can't occur.

For $\rho(X)=1$, we find the possibility
$n=m=2$, which is the first case of the
assertion.
Also, $n=0,4$ might happen.
We first exclude the case $n=4$.
There, the prospective total coordinate
space $\bar X = \Spec \, \KK[\mathcal{R}(X)]$
is explicitly given as
\[
\bar X
\ = \
V(T_1T_2+T_3T_4)
\ \subseteq \
\KK^4.
\]
In this setting, we find a diagonal action
of a three-dimensional torus $\TT$
on $\KK^4$ turning $\bar X$
into a toric variety.
Thus, $X$ as a GIT-quotient of
$\bar X$ by a one-dimensional subgroup
of $\TT$ is as well a toric variety
and must have a polynomial ring
as its Cox ring; a contradiction
to $\bar X$ being singular.

Now we exclude the case $n=0$. 
Due to $\QQ$-factoriality of $X$,
the divisor class group $\Cl(X)$
is of rank one and hence
of the form $\ZZ \otimes \Gamma$
with a finite abelian group~$\Gamma$.
Moreover, the prospective Cox ring of $X$
is given as
\[
\mathcal{R}(X)
\ = \
\KK[T_1,T_2,T_3,T_4] / \bangle{T_1^2+T_2^2+T_3^2+T_4^2}.
\]
With respect to the $\Cl(X)$-grading of $\mathcal{R}(X)$,
the generators $T_i$ as well as the
relation~$g$ are homogeneous.
Moreover, the $T_i$ are prime in $\mathcal{R}(X)$
and any three of the $w_i = \deg(T_i)$ must
generate $\Cl(X)$;
see~\cite{ArDeHaLa}*{Def.~3.2.1.1 and Cor.~3.2.1.11}.
This is realized for instance by  
\[
\Cl(X) = \ZZ \otimes \ZZ / 2\ZZ \otimes  \ZZ / 2\ZZ,
\qquad
Q = \left[
\begin{array}{cccc}  
1 & 1 & 1 & 1
\\
\bar 0 & \bar 1 & \bar 1 & \bar 0 
\\
\bar 0 & \bar 1 & \bar 0 & \bar 1 
\end{array}
\right],
\]
where $Q$ hosts the $w_i$ as its columns.
Note that $w_1,w_2,w_3,w_4$ are pairwise different,
which ensures that $\mathcal{R}(X)$ is not isomorphic
to one of the graded rings with $n=2,4$.
One directly verifies that $\Cl(X)$ and $\mathcal{R}(X)$
as above are uniquely determined by these features.

The task is to show that the above $\Cl(X)$-graded algebra
$\mathcal{R}(X)$ can't be a Cox ring.
Otherwise, as $\bar X$ is smooth apart from the origin,
$X$ would be quasismooth, hence log terminal.
Moreover, we can apply~\cite{ArDeHaLa}*{Cor.~3.3.3.3}
to see that $X$ is a del Pezzo surface of Picard number
one and Gorenstein index one; we used the software
package~\cite{HaKe} for the computation.
The Cox rings of all log del Pezzo surfaces of
Picard number one and Gorenstein index one without
torus action have been computed in~\cite{HaKeLa}*{Thm.~4.1}
and for those with torus action the Cox rings are listed
in~\cite{ArDeHaLa}*{5.4.4.2}; none of these Cox rings
is isomorphic to $\mathcal{R}(X)$ from above.

We verified, that the Cox ring of any full intrinsic
quadric sruface $X$ is as in the assertion, in
particular it is defined by trinomial relations.
Consequently, the associated total coordinate space
$\bar X$ allows a diagonal torus action
of complexity one.
This action induces a non-trivial $\KK^*$-action
on $X$.
Since $\Cl(X)$ is finitely generated by assumption,
this forces $X$ to be rational.
\end{proof}

\section{Picard number one}
\label{sec:rho-1}

The main result of this section,
Theorem~\ref{thm:fiqs2matrix-rho1},
provides the description of all
full intrinsic quadric surfaces
of Picard number one in terms of
the local Gorenstein indices of
two of their possibly singular
points.

\begin{construction}[Full intrinsic quadric
surfaces $X$ of Picard number one
as $\KK^*$-surfaces]
\label{constr:matrix2fiqs-rho1}
Consider an integral matrix of the
form
\[
P
\ := \ 
[v_1,v_2,v_3,v_4]
\ := \ 
\left[
\begin{array}{cccc}
-1 & -1 & 2 & 0  
\\
-1 & -1 & 0 & 2
\\
 a & b & 1 & 1
\end{array}
\right],
\qquad
b \le -2, \ 0 \le a \le -b -2.
\]
Let $Z$ be the toric variety arising from the
fan $\Sigma$ in $\ZZ^3$ with generator matrix $P$
and the maximal cones
\[
\sigma^+ := \cone(v_1,v_3,v_4),
\qquad
\sigma^- := \cone(v_2,v_3,v_4),
\qquad
\tau_0 := \cone(v_1,v_2).
\]
Denote by $U_1,U_2,U_3$ the coordinate
functions on the standard 3-torus $\TT^3 \subseteq Z$.
Then we obtain a normal, non-toric, rational, projective
surface
\[
X \ := \ X(P) \ := \ \overline{V(h)} \ \subseteq \ Z,
\qquad
h \ := \ 1 + U_1 + U_2 \ \in \mathcal{O}(\TT^3),
\]
with a $\KK^*$-action, given on $V(h) \subseteq \TT^3$
by $t \cdot x = (x_1, x_2, tx_3)$.
With $K := \ZZ^4/\im(P^*)$, the divisor class group
of $X$ is given by 
\[
\Cl(X)
\ \cong \
\Cl(Z)
\ \cong \
K
\ \cong \
\ZZ \times \ZZ \, / \, 2\gcd(2a+2, \, a-b) \ZZ \, .
\]
Moreover, denoting by $Q \colon\ZZ^4 \to K$ the projection,
we obtain the following description of the Cox ring of $X$
as a graded algebra:
\[
\mathcal{R}(X) 
\ \cong \   
\KK[T_1, \ldots, T_4] / \bangle{T_1T_2+T_3^2+T_4^2},
\qquad
\deg(T_i) = Q(e_i) = [D_i],
\]
where $D_i \subseteq X$ is the prime divisor on $X$ obtained by
intersecting $X$ with the toric prime divisor of $Z$
given by the ray through $v_i$ and $[D_i] \in \Cl(X)$ denotes
its class.
\end{construction}

\begin{proof}
By definition, the columns $v_1,\ldots,v_4$ of $P$ are pairwise
different primitive integral vectors. Moreover, $v_1,\ldots,v_4$
generate $\QQ^3$ as a convex cone, as we have
\[
2v_1 + v_3 + v_4 =  [0,0,2a+2], \ a \ge 0,
\qquad
2v_2 + v_3 + v_4 =  [0,0,2b+2], \ b \le -2.
\]
Consequently, $P$ is a defining matrix of a rational projective
$\KK^*$-surface in the sense of~\cite{ArDeHaLa}*{Constr.~3.4.3.1}.
The statements follow from those
of~\cite{ArDeHaLa}*{Sec.~3.4.3}.
\end{proof}

The \emph{local class group} $\Cl(X,x)$ of a point $x \in X$
is the group of Weil divisors of~$X$ modulo those being
principal near $x$, and by $\cl(X,x)$ the order of $\Cl(X,x)$.

\begin{proposition}
\label{prop:fiqs-rho1-props-1}
Let $X = X(P)$ arise from
Construction~\ref{constr:matrix2fiqs-rho1}.
The fixed points of the $\KK^*$-action on $X$ are
given in Cox coordinates by 
\[
x^+ \ := \ [0,1,0,0],
\qquad
x^- \ := \ [1,0,0,0],
\qquad
x_0 \ := \ [0,0,1,I].
\]
Moreover, for the orders of the local class groups of
the fixed points of the $\KK^*$-action we obtain
\[
\cl(X,x^+) = 4a+4,
\qquad
\cl(X,x^-) = -4b-4,
\qquad
\cl(X,x_0) = a-b.
\]
Finally, the ordered pair $(4a+4, \, -4-4b)$
is an isomorphy invariant of the algebraic
surface $X$.
\end{proposition}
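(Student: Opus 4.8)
The plan is to compute the three local class group orders explicitly and then extract the claimed isomorphy invariant. The first three equalities follow directly from the general theory of $\KK^*$-surfaces presented via defining matrices in \cite{ArDeHaLa}*{Sec.~3.4.3}. Concretely, each fixed point of the $\KK^*$-action corresponds to a maximal cone (or the contracted curve) of the fan $\Sigma$, and the order of its local class group is governed by the determinant of the relevant $3 \times 3$ submatrix of $P$. For the elliptic fixed points $x^+$ and $x^-$, sitting over the cones $\sigma^+ = \cone(v_1,v_3,v_4)$ and $\sigma^- = \cone(v_2,v_3,v_4)$, one reads off
\[
\cl(X,x^+) = \abs{\det[v_1,v_3,v_4]},
\qquad
\cl(X,x^-) = \abs{\det[v_2,v_3,v_4]},
\]
and a short computation with the first and second columns replaced by $(-1,-1,a)^{\mathrm t}$ and $(-1,-1,b)^{\mathrm t}$ gives $4a+4$ and $-4b-4$ (note $a \ge 0$ and $b \le -2$ make these positive). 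For the hyperbolic fixed point $x_0$ lying on the contracted source/sink curve over $\tau_0 = \cone(v_1,v_2)$, the order $\cl(X,x_0)$ is computed from the lattice-index data of the two vectors $v_3, v_4$ relative to the cone $\tau_0$, yielding $a-b$.

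The heart of the statement is the final assertion that the \emph{ordered} pair $(4a+4,\,-4b-4)$ is an isomorphy invariant. First I would record that $\{\cl(X,x^+),\cl(X,x^-)\}$, as an unordered pair, is manifestly invariant: any isomorphism $X(P) \cong X(P')$ must carry fixed points to fixed points and preserve the local class group order, and it must match elliptic fixed points with elliptic fixed points (these are distinguished intrinsically, e.g.\ as the two points whose local class group order can exceed that forced by the hyperbolic point, or more robustly by the structure of the $\KK^*$-action near them). The issue is therefore purely one of \emph{orientation}: why can an isomorphism not swap $x^+$ with $x^-$, thereby exchanging the two entries?

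The key point is that $x^+$ and $x^-$ are the source and the sink of the $\KK^*$-action, and these two roles are not interchangeable by an abstract isomorphism of the variety $X$ alone unless one also allows inverting the torus. I would argue as follows: the $\KK^*$-action on a full intrinsic quadric surface is intrinsic, being induced (as in the proof of Theorem~\ref{thm:fiqs-cr}) by the complexity-one torus action on the total coordinate space determined by the trinomial Cox ring; hence any isomorphism is equivariant up to an automorphism of $\KK^*$. The only nontrivial automorphism of $\KK^*$ is $t \mapsto t^{-1}$, which swaps source and sink, so a priori the ordered pair is only invariant up to this swap. To pin down the order, I would invoke an asymmetry between the two elliptic fixed points that survives the inversion $t \mapsto t^{-1}$: namely the relation of each local class group order to the sign of the third coordinate and to the orientation of the curves $D_3, D_4$ through $x_0$. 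Concretely, $x^+$ is the limit $\lim_{t \to 0} t\cdot x$ and $x^-$ the limit $\lim_{t \to \infty} t \cdot x$ for generic $x$, and these are exchanged by inversion; but the pair $(4a+4,-4b-4)$ together with the common value $a-b$ at $x_0$ over-determines $(a,b)$, so an isomorphism inducing $t \mapsto t^{-1}$ would force $4a+4 = -4b-4$ and $-4b-4 = 4a+4$ simultaneously, i.e.\ $a = -b-2$, in which case the swap is a genuine symmetry and the ordered pair is unchanged anyway. Thus in every case the ordered pair is preserved.

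The main obstacle I anticipate is the orientation argument in the last paragraph: making precise that the only ambiguity is the $\KK^*$-inversion, and then verifying that this inversion either preserves the ordered pair or only occurs in the symmetric situation $a = -b-2$. I expect this to reduce, via the redundance-free normalization already built into Construction~\ref{constr:matrix2fiqs-rho1} (the constraints $b \le -2$, $0 \le a \le -b-2$), to checking that two admissible matrices $P, P'$ give isomorphic surfaces only if their column data agree up to the allowed symmetries of the cone configuration; the local class group orders $(4a+4,-4b-4,a-b)$ then recover $(a,b)$ uniquely, and comparison of these triples closes the argument.
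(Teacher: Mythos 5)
The computation of the fixed points and of the local class group orders via the determinants $\det[v_1,v_3,v_4]$, $\det[v_2,v_3,v_4]$ and the $2\times 2$ minor for $x_0$ is exactly the paper's route, and your reduction of the final claim to the invariance of the unordered set $\{\cl(X,x^+),\cl(X,x^-)\}$ is also how the paper proceeds: $x^+,x^-$ are characterized intrinsically as the only fixed points lying in the closure of infinitely many orbits, and any two $\KK^*$-actions on a non-toric rational projective surface are conjugate in the automorphism group, so the unordered pair depends only on the abstract surface.

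The step where you pin down the \emph{order} is where your argument breaks. You claim that an isomorphism $X(P)\cong X(P')$ matching $x^+$ with $x^-$ would force $4a+4=-4b-4$ because the data "over-determine" $(a,b)$. It does not: such an isomorphism yields $4a+4=-4b'-4$ and $-4b-4=4a'+4$ together with $a-b=a'-b'$, and the last equation is automatically satisfied whenever the first two are, so no contradiction arises from over-determination alone. What actually settles the order --- and what the paper's terse "the assertion follows" silently uses --- is the normalization $0\le a\le -b-2$ built into Construction~\ref{constr:matrix2fiqs-rho1}: it gives $4a+4\le -4b-4$ for every admissible $P$, so the ordered pair is simply the unordered invariant pair listed increasingly, hence itself an invariant. (In the swap scenario the two inequalities $4a+4\le-4b-4$ and $4a'+4\le-4b'-4$ combine with the displayed equalities to force all four numbers to coincide, so the ordered pairs agree there too.) You gesture at the normalization in your closing paragraph, but the concrete argument you actually offer in its place is not valid; replacing it with this one-line observation completes the proof, and the detour through source/sink limits and $t\mapsto t^{-1}$ becomes unnecessary.
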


\begin{proof}
For the first statement, we refer to~\cite{HaHaSp}*{Rem.~5.6}.
For the second one, we
use~\cite{ArDeHaLa}*{Lemma~2.1.4.1, Prop.~3.3.1.5}
to compute the
local class group orders of $x^+$, $x^-$ and $x_0$ as
\[
\det[v_1,v_3,v_4],
\qquad
\det[v_2,v_3,v_4],
\qquad
\det
\left[\renewcommand{\arraystretch}{.7} \arraycolsep=.7\arraycolsep 
  \begin{array}{rr}
  -1 &  -1 \\ a &  b 
  \end{array}
\right].
\] 
For the last statement recall from~\cite{ArDeHaLa}*{Prop.~5.4.1.9}
that $x^+$, $x^-$ are the only $\KK^*$-fixed points
lying in the closure of infinitely many orbits.
Thus, $\{\cl(X,x^+),\cl(X,x^-)\}$ and
$\cl(X,x_0)$ are invariants of the $\KK^*$-surface $X$.
Since on a non-toric, rational, projective surface
any two $\KK^*$-actions are conjugate in the automorphism
group, the assertion follows.
\end{proof}

\begin{proposition}
\label{prop:fiqs2matrix-rho1}
Every full intrinsic quadric surface $X$ 
of Picard number one is isomorphic to
an $X(P)$ for precisely one matrix $P$
from Construction~\ref{constr:matrix2fiqs-rho1}.
\end{proposition}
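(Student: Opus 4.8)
The plan is to separate the claim into existence (every such $X$ equals some $X(P)$) and uniqueness (the admissible $P$ is determined by $X$), dispatching uniqueness first since it follows almost directly from invariants already recorded. Given a full intrinsic quadric surface $X$ of Picard number one, Theorem~\ref{thm:fiqs-cr} supplies the $\KK^*$-action and the Cox ring $\KK[T_1,\dots,T_4]/\langle T_1T_2+T_3^2+T_4^2\rangle$, while Proposition~\ref{prop:fiqs-rho1-props-1} attaches to $X$ the orders $\cl(X,x^+)=4a+4$ and $\cl(X,x^-)=-4b-4$ of the local class groups at the two elliptic fixed points and states that these form an isomorphy invariant of $X$. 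The defining inequalities $0\le a$ and $a\le -b-2$ of Construction~\ref{constr:matrix2fiqs-rho1} are equivalent to $4\le 4a+4\le -4b-4$, so the two orders are automatically listed in increasing size; hence the invariant determines the pair $(4a+4,-4b-4)$, and thereby $(a,b)$, uniquely. Consequently any two matrices from the construction yielding isomorphic surfaces must coincide.

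For existence I would start again from Theorem~\ref{thm:fiqs-cr}: $X$ is a $\QQ$-factorial, rational, projective $\KK^*$-surface with the Cox ring above. By the structure theory of rational projective $\KK^*$-surfaces in~\cite{ArDeHaLa}*{Sec.~3.4.3}, such an $X$ is of the form $X(P)$ for a defining matrix $P$ whose columns are the primitive ray generators of the describing fan in $\ZZ^3$. The upper two rows of $P$ are then forced by the trinomial relation: the monomials $T_1T_2$, $T_3^2$, $T_4^2$ make the two columns carrying $T_1,T_2$ have top part $(-1,-1)^{\mathsf T}$, while the columns carrying $T_3,T_4$ have top parts $(2,0)^{\mathsf T}$ and $(0,2)^{\mathsf T}$, exactly as in Construction~\ref{constr:matrix2fiqs-rho1}.

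It then remains to normalize the bottom row and to pin down the parameter range. Using the admissible operations on defining matrices, namely unimodular row operations (basis changes of the acting lattice $\ZZ^3$) together with permutations of columns within the same monomial block of the relation, I would add suitable integral multiples of the two upper rows to the bottom one so as to reduce the bottom entries over $T_3,T_4$ to $1,1$, and denote the remaining two by $a,b$. Completeness of the fan, that is the requirement that the columns positively span $\QQ^3$, then forces $a\ge 0$ and $b\le -2$ via $2v_1+v_3+v_4=(0,0,2a+2)$ and $2v_2+v_3+v_4=(0,0,2b+2)$. The only residual freedom is the involution $(a,b)\mapsto(-b-2,-a-2)$, realized by a unimodular row operation combined with the transposition of the two columns carrying $T_1,T_2$ (equivalently, by reversing the $\KK^*$-action, which interchanges $x^+$ and $x^-$); normalizing so that $\cl(X,x^+)\le\cl(X,x^-)$, that is $a\le -b-2$, removes it and lands $P$ in the stated region.

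The main obstacle is exactly this last normalization: one must identify the full group of admissible operations on the defining matrices and verify that the region $b\le -2$, $0\le a\le -b-2$ is a strict fundamental domain, so that distinct admissible matrices never give isomorphic surfaces and the boundary case $a=-b-2$ is covered without overlap. Concretely, this means checking that, beyond column permutations within a monomial block and the single involution $(a,b)\mapsto(-b-2,-a-2)$, no further operation preserves the isomorphy type of $X(P)$; the invariance recorded in Proposition~\ref{prop:fiqs-rho1-props-1} is what certifies that these operations account for all identifications, simultaneously closing existence and uniqueness.
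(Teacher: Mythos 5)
Your proposal is correct and follows essentially the same route as the paper: existence via the structure theory of rational projective $\KK^*$-surfaces and the same sequence of admissible operations (row reduction of the bottom row, slope-ordering, and the involution $(a,b)\mapsto(-b-2,-a-2)$ given by negating the last row), and uniqueness via the isomorphy invariant $(4a+4,-4b-4)$ from Proposition~\ref{prop:fiqs-rho1-props-1}, which indeed renders the worry about a ``strict fundamental domain'' moot.
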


\begin{proof}
According to Theorem~\ref{thm:fiqs-cr}
and~\cite{HaHiWr}*{Ex.~7.1},
the  defining matrix $P$ is of the format
$3 \times 4$ and the first two rows are as
in the assertion:
\[
P
\ = \ 
\left[
\begin{array}{cccc}
-1 & -1 & 2 & 0 
\\
-1 & -1 & 0 & 2
\\
d_1 & d_2 & d_3 & d_4 
\end{array}
\right].
\]
Admissible operations~\cite{HaHaSp}*{Prop.~6.7}
bring us to the setting of
Construction~\ref{constr:matrix2fiqs-rho1}.
First, adding suitable multiples of the first two rows
to the last one yields
\[
P
\ = \ 
\left[
\begin{array}{cccc}
-1 & -1 & 2 & 0 
\\
-1 & -1 & 0 & 2
\\
a & b & 1 & 1
\end{array}
\right].
\]
Second, swapping the first two columns if necessary,
we achieve that $P$ is slope-ordered, meaning
\[
a > b.
\]
As for any defining matrix of a
rational $\KK^*$-surface with two elliptic fixed
points, slope orderedness implies 
\[
a + \frac{1}{2} + \frac{1}{2} =: m^+ > 0,
\qquad\qquad
b + \frac{1}{2} + \frac{1}{2} =: m^- < 0,
\]
see~\cite{HaHaSp}*{Rem.~7.5}.
Multiplying the last row by $-1$ turns $m^{\pm}$
into $m^{\mp}$.
Doing so, if necessary, and re-arranging
via the first two steps yields
\[
a + 1 
\ \le \
-b -1.
\]
If $X(P) \cong X(P')$ holds with $P,P'$ as in
Construction~\ref{constr:matrix2fiqs-rho1},
then we have $P =P'$, as due to 
Proposition~\ref{prop:fiqs-rho1-props-1},
the entries $a,b$ of $P$ and $a',b'$ of $P'$
satisfy 
\[
(4a+4, \, -4b-4)
\ = \
(4a'+4, \, -4b'- 4).
\]
\end{proof}

Recall that the \emph{Gorenstein index} of a $\QQ$-factorial variety
$X$ is the smallest positive integer~$\iota_X$ such that the
$\iota_X$-fold of a canonical divisor of $X$ is Cartier.
The  \emph{local Gorenstein index} $\iota_x$ of a point $x \in X$
is the smallest positive integer such that the $\iota_x$-fold
of a canonical divisor of $X$ is Cartier near $x$.

\begin{theorem}
\label{thm:fiqs2matrix-rho1} 
For any $\iota \in \ZZ_{\ge 1}$, consider the set~$M_\iota$
of pairs $\eta = (\iota^+,\iota^-) \in \ZZ_{\ge 1}^2$
with $\lcm(\iota^+,\iota^-) = \iota$. 
Define subsets
\[
\begin{array}{lcl}
S_{11}(1,\iota)
& := & 
\{
\eta \in M_\iota;
\ \iota^+ \text{ odd},
\ \iota^-  \text{ odd},
\ \iota^+ \le \iota^-
\},
\\[5pt]       
S_{12}(1,\iota)
& := & 
\{
\eta \in M_\iota;
\ \iota^+ \text{ odd},
\ \iota^- \text{ even},
\ 4 \mid \iota^-,       
\ 2\iota^+ \le \iota^- 
\},
\\[5pt]       
S_{21}(1,\iota)
& := & 
\{
\eta \in M_\iota;
\ \iota^+ \text{ even},
\ \iota^- \text{ odd},
\ 4 \mid \iota^+,       
\ \iota^+ \le 2\iota^-
\},
\\[5pt]       
S_{22}(1,\iota)
& := & 
\{
\eta \in M_\iota;
\ \iota^+ \text{ even},
\ \iota^- \text{ even},
\ 4 \mid \iota^+,       
\ 4 \mid \iota^-,       
\ \iota^+ \le \iota^- 
\}.
\end{array}
\]
Then each set $S_{ij}(1,\iota)$ provides us with a series
of defining matrices $P_\eta$ of full intrinsic quadric
surfaces:

{\small
\[
\begin{array}{lcl}
\eta = (\iota^+,\iota^-) \in S_{11}(1,\iota)\colon
& &
\eta = (\iota^+,\iota^-) \in S_{12}(1,\iota)\colon
\\[5pt]
P_\eta = 
\left[
\begin{array}{cccc}
-1 & -1 & 2 & 0 
\\
-1 & -1 & 0 & 2
\\
\iota^+-1 & -\iota^--1 & 1 & 1 
\end{array}
\right],
&  &
P_\eta  =
\left[
\begin{array}{cccc}
-1 & -1 & 2 & 0 
\\
-1 & -1 & 0 & 2 
\\
\iota^+-1 & -\frac{\iota^-}{2}-1 & 1 & 1 
\end{array}
\right],
\\[30pt]
\eta = (\iota^+,\iota^-) \in S_{21}(1,\iota)\colon
& &
\eta = (\iota^+,\iota^-) \in S_{22}(1,\iota)\colon
\\[5pt]
P_\eta  =
\left[
\begin{array}{cccc}
-1 & -1 & 2 & 0 
\\
-1 & -1 & 0 & 2
\\
\frac{\iota^+}{2}-1 & -\iota^- -1 & 1 & 1 
\end{array}
\right],
&  &
P_\eta  =
\left[
\begin{array}{cccc}
-1 & -1 & 2 & 0 
\\
-1 & -1 & 0 & 2
\\
\frac{\iota^+}{2}-1 & -\frac{\iota^-}{2}-1 & 1 & 1 
\end{array}
\right].
\end{array}           
\]
}

\medskip

\noindent
Each surface $X(P_\eta)$ is of Picard number one, Gorenstein index
$\iota = \lcm(\iota^+,\iota^-)$ and $\iota^\pm$
are the local Gorenstein indices of the points
\[
x^+ \ = \ [0,1,0,0], \qquad\qquad
x^- \ = \ [1,0,0,0].
\]
Moreover, every full intrinsic quadric surface of Picard number one
and Gorenstein index~$\iota$ is isomorphic to $X(P_\eta)$ for
precisely one $P_\eta$ from the above list.
\end{theorem}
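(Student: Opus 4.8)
The statement to establish is the last one: every full intrinsic quadric surface of Picard number one and Gorenstein index $\iota$ is isomorphic to $X(P_\eta)$ for precisely one $\eta$ in the disjoint union $S_{11}(1,\iota)\cup S_{12}(1,\iota)\cup S_{21}(1,\iota)\cup S_{22}(1,\iota)$. By Proposition~\ref{prop:fiqs2matrix-rho1}, every such surface is already isomorphic to $X(P)$ for exactly one matrix $P$ from Construction~\ref{constr:matrix2fiqs-rho1}, that is, with entries satisfying $b \le -2$ and $0 \le a \le -b-2$. So the plan is to reparametrize this admissible set of pairs $(a,b)$ by the local Gorenstein indices $(\iota^+,\iota^-)$ at $x^+,x^-$ and to verify that the resulting constraints cut out exactly the four sets $S_{ij}(1,\iota)$. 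The uniqueness assertion will then follow from the uniqueness already present in Proposition~\ref{prop:fiqs2matrix-rho1}.

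The core step is to compute $\iota^\pm$ in terms of $a$ and $b$. Since the local Gorenstein index $\iota_x$ is the order of the canonical class in $\Cl(X,x)$, I would start from the adjunction formula $\mathcal{K}_X = \deg(g)-\sum_i\deg(T_i) = -(w_3+w_4)$ and determine the order of its image in the local class groups, using the presentations underlying Proposition~\ref{prop:fiqs-rho1-props-1}, namely $\Cl(X,x^+)$ as the cokernel of $[v_1,v_3,v_4]$ of order $4a+4$, and similarly $\Cl(X,x^-)$ of order $-4b-4$. The outcome I expect is the dichotomy
\[
\iota^+ = \begin{cases} a+1, & a \text{ even},\\ 2(a+1), & a \text{ odd},\end{cases}
\qquad
\iota^- = \begin{cases} -b-1, & b \text{ even},\\ 2(-b-1), & b \text{ odd};\end{cases}
\]
in particular each of $\iota^\pm$ is either odd or divisible by $4$, never congruent to $2$ modulo $4$. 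For the global index I would use that the singular locus is contained in the three fixed points $x^+,x^-,x_0$, so that $\iota_X = \lcm(\iota^+,\iota^-,\iota_0)$, where $\iota_0$ is the local Gorenstein index at $x_0$. The remaining point $x_0$ is a cyclic quotient singularity of type $A_{a-b-1}$ (the two bounding rays $v_1,v_2$ share their first two coordinates), hence Gorenstein with $\iota_0=1$; therefore $\iota_X=\lcm(\iota^+,\iota^-)=\iota$, as claimed.

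With the dichotomy in hand the reparametrization is bookkeeping. The four parity cases for $(a,b)$ correspond bijectively to the four series: inverting the formula gives $a+1\in\{\iota^+,\iota^+/2\}$ and $-b-1\in\{\iota^-,\iota^-/2\}$, so $a$ odd forces $4\mid\iota^+$ and $b$ odd forces $4\mid\iota^-$. Translating the normalization $a+1\le -b-1$ from Proposition~\ref{prop:fiqs2matrix-rho1} yields, in the four cases respectively, the inequalities $\iota^+\le\iota^-$, $2\iota^+\le\iota^-$, $\iota^+\le 2\iota^-$ and $\iota^+\le\iota^-$, which are exactly those defining $S_{11}$, $S_{12}$, $S_{21}$, $S_{22}$; the conditions $a\ge 0$ and $b\le -2$ become automatic. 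Since $\lcm(\iota^+,\iota^-)=\iota$ pins down $M_\iota$, this identifies the admissible pairs of Gorenstein index $\iota$ with the disjoint union of the four $S_{ij}(1,\iota)$, the disjointness being visible from the mutually exclusive parities of $(\iota^+,\iota^-)$.

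Finally, the \emph{precisely one} assertion follows because the reparametrization $(a,b)\leftrightarrow(\iota^+,\iota^-)$ is a bijection from admissible pairs onto $\bigsqcup_{ij}S_{ij}(1,\iota)$, and Proposition~\ref{prop:fiqs2matrix-rho1} guarantees that distinct admissible matrices give non-isomorphic surfaces. The hard part is the local Gorenstein index computation: knowing only the orders $4a+4$ and $-4b-4$ of the local class groups is not enough, since the order of $[\mathcal{K}_X]$ inside them depends on the finer group structure and on the position of the canonical class. Capturing precisely why the index jumps from $a+1$ to $2(a+1)$ as $a$ passes from even to odd --- equivalently, why $\iota^\pm \not\equiv 2 \pmod 4$ --- is the delicate point, and it is exactly this behaviour that produces the parity-and-divisibility conditions in the definitions of the $S_{ij}$.
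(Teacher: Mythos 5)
Your outline matches the paper's proof almost step for step: reduction to the normal form of Proposition~\ref{prop:fiqs2matrix-rho1}, the parity dichotomy for $\iota^{\pm}$ in terms of $a$ and $b$, the observation that an even local Gorenstein index is forced to be divisible by $4$, the translation of $a+1\le -b-1$ into the four inequalities defining the $S_{ij}(1,\iota)$, the triviality of the local Gorenstein index at $x_0$, and the uniqueness via the parity separation of the four series. The formulas you ``expect'' are exactly the ones the paper obtains, and your $A_{a-b-1}$ identification of $x_0$ is a legitimate substitute for the paper's citation of \cite{HaHaSp}*{Prop.~8.8}.

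The one real gap is the step you yourself flag as the hard part: you assert the dichotomy $\iota^+\in\{a+1,\,2(a+1)\}$ but do not derive it, and you are right that the local class group \emph{orders} $4a+4$, $-4b-4$ alone cannot give it. The paper closes this by quoting \cite{HaHaSp}*{Prop.~8.9} for an explicit linear form $u^+$ whose integrality is equivalent to $\iota^+\mathcal{K}_X$ being Cartier near $x^+$, namely
\[
u^+ \ = \ \left[\tfrac{a\iota^+}{2a+2},\ \tfrac{a\iota^+}{2a+2},\ \tfrac{\iota^+}{a+1}\right].
\]
Integrality of the last entry gives $(a+1)\mid\iota^+$, and then the first entry equals $\tfrac{a}{2}\cdot\tfrac{\iota^+}{a+1}$, which is integral for $\iota^+=a+1$ exactly when $a$ is even and otherwise first for $\iota^+=2(a+1)$; this is the whole content of the jump and of the condition $\iota^\pm\not\equiv 2\pmod 4$. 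So your proof becomes complete once you either import this representing linear form (equivalently, compute the class of $\mathcal{K}_X=-(w_3+w_4)$ in the cokernel of $[v_1,v_3,v_4]$ rather than just the cokernel's order) or carry out the cokernel computation you sketch. Everything downstream of that point in your argument is sound.
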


\goodbreak

\begin{proof}
Let $X$ be a full intrinsic quadric surface of Picard number one.
We first show $X \cong X(P_\eta)$ with $P_\eta$ from
the above list and check the local Gorenstein indices.
Proposition~\ref{prop:fiqs2matrix-rho1} allows us to assume
$X = X(P)$ with a unique $P$ of the form
\[
P
\ = \ 
\left[
\begin{array}{cccc}
-1 & -1 & 2 & 0  
\\
-1 & -1 & 0 & 2
\\
 a & b & 1 & 1
\end{array}
\right],
\qquad
b \le -2, \ 0 \le a \le -b -2.
\]
According to~\cite{ArDeHaLa}*{Prop.~3.3.3.2}, we have
the anticanonical divisor $-\mathcal{K} = D_3+D_4$
on $X$ and~\cite{HaHaSp}*{Prop.~8.9} tells us
that the linear forms $u^\pm$ representing the $\iota^\pm$-fold
of $-\mathcal{K}$ near $x^\pm$ are given by

{\small
\[
u^+
\ = \ 
\left[
\frac{a\iota^+}{2a+2}, \,  \frac{a\iota^+}{2a+2}, \,  \frac{\iota^+}{a+1},
\right],
\qquad
u^-
\ = \ 
\left[
\frac{b\iota^-}{2b+2}, \,  \frac{b\iota^-}{2b+2}, \,  \frac{\iota^-}{b+1} 
\right].
\]
}

\noindent
By the definition of the local Gorenstein index, these
are primitive integral vectors.
Consequently, the local Gorenstein indices $\iota^\pm$ of
$x^\pm$ are 
\[
\iota^+
\ = \
\begin{cases}
a+1, & a \text{ even},
\\
2a+2,   & a \text{ odd},
\end{cases}
\qquad\qquad
\iota^-
\ = \
\begin{cases}
-b-1, & b \text{ even},
\\
-2b-2,   & b \text{ odd}.
\end{cases}
\]
In particular, $\iota^+$, $\iota^-$ is even (odd) if and only if
$a$, $b$ is odd (even), respectively.
Moreover, if $\iota^\pm$ is even, then it is divisible by four.
Thus, $P$ is one of the matrices~$P_\eta$ with $\eta=(\iota^+,\iota^-)$
listed in the assertion and $\iota^\pm$ is the local Gorenstein
index of~$x^\pm$.

Conversely, all the matrices $P_\eta$ listed in the assertion fit
into the shape of Construction~\ref{constr:matrix2fiqs-rho1}
and thus deliver full intrinsic quadric surfaces $X = X(P_\eta)$.
By~\cite{HaHaSp}*{Prop.~8.8}, the point
$x_0 = [0,0,1,I] \in X$ has local Gorenstein
index one, hence the resulting $X$ is of Gorenstein index
$\iota = \lcm(\iota^+, \iota^-)$.

Finally, we ensure that the matrices $P_\eta$ 
listed in the assertion define pairwise
non-isomorphic $X(P_\eta)$.
By Proposition~\ref{prop:fiqs2matrix-rho1},
this means to show that any two matrices arising
from different $S_{ij}(1,\iota)$ differ from each
other.
This is done by comparing the parity vectors
$(\bar a, \bar b)$ in $\ZZ/2\ZZ \times \ZZ/2\ZZ$
of the first two entries $a,b$ of the third row
of $P_\eta$ for the $\eta \in S_{ij}(1,\iota)$:
\renewcommand{\arraystretch}{1.5}
\begin{center}
\begin{tabular}{c|c|c|c|c}
&
$S_{11}(1,\iota)$
&
$S_{12}(1,\iota)$
&
$S_{21}(1,\iota)$
&
$S_{22}(1,\iota)$
\\
\hline
$(\bar a, \bar b)$
&
$(\bar 0, \bar 0)$
&
$(\bar 0, \bar 1)$
&
$(\bar 1, \bar 0)$
&
$(\bar 1, \bar 1)$
\end{tabular}
\end{center}
\end{proof}

\section{Picard number two}
\label{sec:rho-2}

The main result of this section,
Theorem~\ref{thm:fiqs2matrix-rho2},
provides the description of all
full intrinsic quadric surfaces
of Picard number two in terms of
the local Gorenstein indices of
two of their possibly singular
and the local class group order
of another possibly singular
point.

\begin{construction}
[Full intrinsic quadric
surfaces $X$ of Picard number two
as $\KK^*$-surfaces]
\label{constr:matrix2fiqs-rho2}
Consider an integral matrix of the
form
\[
P
\ := \ 
[v_1,v_2,v_3,v_4,v_5]
\ := \ 
\left[
\begin{array}{ccccc}
-1 & -1 & 1 & 1 & 0 
\\
-1 & -1 & 0 & 0 & 2
\\
a & b & 0 & c & 1
\end{array}
\right],
\quad
\begin{array}{l}
b < a, \ c < 0, \ a \ge 0,
\\[5pt]
b+c \le -1, \ a-b \le -c, 
\\[5pt]
a \le -b-c-1.
\end{array}
\]
Let $Z$ be the toric variety arising from the fan 
$\Sigma$ in $\ZZ^3$ with generator matrix $P$
and the maximal cones 
\[
\sigma^+ := \cone(v_1,v_3,v_5),
\qquad
\sigma^- := \cone(v_2,v_4,v_5),
\]
\[
\tau_0 := \cone(v_1,v_2),
\qquad
\tau_1 := \cone(v_3,v_4).
\]
Denote by $U_1,U_2,U_3$ the coordinate
functions on the standard 3-torus $\TT^3 \subseteq Z$.
Then we obtain a normal, non-toric, rational, projective surface
\[
X \ := \ X(P) \ := \ \overline{V(h)} \ \subseteq \ Z,
\qquad
h \ := \ 1 + U_1 + U_2 \ \in \mathcal{O}(\TT^3),
\]
with a $\KK^*$-action, given on $V(h) \subseteq \TT^3$
by $t \cdot x = (x_1, x_2, tx_3)$.
With $K := \ZZ^5/\im(P^*)$, the divisor class group
of $X$ is given as
\[
\Cl(X) \ \cong \ K \ \cong \ \Cl(Z)
\ \cong \ 
\ZZ^2 \times \ZZ \, / \gcd(2a+1,a-b,-c) \ZZ.
\]
Moreover, denoting by $Q \colon\ZZ^5 \to K$ the projection,
we obtain the following description of Cox ring of $X$
as graded algebra:
\[
\mathcal{R}(X) 
\ \cong \   
\KK[T_1, \ldots, T_5] / \bangle{T_1T_2+T_3T_4+T_5^2},
\qquad
\deg(T_i) = Q(e_i) = [D_i],
\]
where $D_i \subseteq X$ is the prime divisor on $X$ obtained by
intersecting $X$ with the toric prime divisor of $Z$
given by the ray through $v_i$ and $[D_i] \in \Cl(X)$ denotes
its class.
\end{construction}

\begin{proof}
By definition, the columns $v_1,\ldots,v_5$ of $P$ are pairwise
different primitive integral vectors. Moreover, $v_1,\ldots,v_5$
generate $\QQ^3$ as a convex cone, as we have
\[
2v_1 + 2v_3 + v_5 =  [0,0,2a+1], \ a \ge 0,
\]
\[
2v_2 + 2v_4 + v_5 =  [0,0,2b+2c+1], \ b+c \le -1.
\]
Consequently, $P$ is a defining matrix of a rational projective
$\KK^*$-surface in the sense of~\cite{ArDeHaLa}*{Constr.~3.4.3.1}.
The statements follow from those
of~\cite{ArDeHaLa}*{Sec.~3.4.3}.
\end{proof}

\begin{proposition}
\label{prop:fiqs-rho2-props-1}
Let $X = X(P)$ arise from
Construction~\ref{constr:matrix2fiqs-rho2}.
The fixed points of the $\KK^*$-action on $X$ are
given in Cox coordinates by 
\[
x^+ \ := \ [0,1,0,1,0],
\qquad
x^- \ := \ [1,0,1,0,0],
\quad
\]
\[
x_0 \ := \ [0,0,1,1,I],
\qquad
x_1 \ := \ [1,1,0,0,I].
\]
Moreover, the orders of the local class groups of the
fixed points of the $\KK^*$-action on $X$ are given by 
\[
\cl(X,x^+) = 1+2a,
\qquad
\cl(X,x^-) = -1-2b-2c,
\]
\[
\cl(X,x_0) = a-b,
\qquad
\cl(X,x_1) = -c.
\]
Finally, the ordered pairs $(1+2a, \, -1-2b-2c)$
and $(a-b, \, -c)$ are isomorphy invariants of the
algebraic surface $X$.
\end{proposition}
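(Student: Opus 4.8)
The plan is to follow the pattern of the proof of Proposition~\ref{prop:fiqs-rho1-props-1}, now accounting for the two additional fixed points $x_0,x_1$ coming from the two two-dimensional cones $\tau_0,\tau_1$.

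First, for the list of fixed points, I would invoke the description of $\KK^*$-fixed points of a rational projective $\KK^*$-surface in terms of its defining matrix, as in \cite{HaHaSp}*{Rem.~5.6}: the elliptic fixed points $x^+,x^-$ (source and sink) correspond to the full-dimensional cones $\sigma^+ = \cone(v_1,v_3,v_5)$, $\sigma^- = \cone(v_2,v_4,v_5)$, while the two remaining isolated fixed points $x_0,x_1$ correspond to the two-dimensional cones $\tau_0 = \cone(v_1,v_2)$, $\tau_1 = \cone(v_3,v_4)$. Reading off the associated Cox coordinates then yields the four points as stated.

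Second, for the orders of the local class groups I would again apply \cite{ArDeHaLa}*{Lemma~2.1.4.1, Prop.~3.3.1.5}, which expresses $\cl(X,x)$ for a fixed point $x$ as the absolute value of the determinant built from the primitive generators of the cone defining $x$. For the elliptic points this produces the $3 \times 3$ determinants
\[
\cl(X,x^+) = |\det[v_1,v_3,v_5]|, \qquad \cl(X,x^-) = |\det[v_2,v_4,v_5]|,
\]
and for $x_0,x_1$ the $2 \times 2$ determinants attached to the cones $\tau_0,\tau_1$, computed exactly as for the point $x_0$ in the Picard number one case. A direct evaluation gives $1+2a$, $-(1+2b+2c)$, $a-b$ and $-c$; the defining inequalities of Construction~\ref{constr:matrix2fiqs-rho2} (namely $a \ge 0$, $b+c \le -1$, $b < a$ and $c < 0$) fix the signs so that all four quantities are the claimed positive integers.

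For the final statement I would argue exactly as in Proposition~\ref{prop:fiqs-rho1-props-1}, the only new point being that the ``inner'' fixed points now form a pair rather than a single point. By \cite{ArDeHaLa}*{Prop.~5.4.1.9}, the points $x^+$ and $x^-$ are precisely the $\KK^*$-fixed points lying in the closure of infinitely many orbits; hence any $\KK^*$-equivariant isomorphism preserves the set $\{x^+,x^-\}$ as well as its complement $\{x_0,x_1\}$ among the fixed points. Consequently the two multisets $\{\cl(X,x^+),\cl(X,x^-)\}$ and $\{\cl(X,x_0),\cl(X,x_1)\}$ are invariants of the $\KK^*$-surface, and since on a non-toric, rational, projective surface any two $\KK^*$-actions are conjugate in the automorphism group, they are in fact invariants of the abstract algebraic surface $X$. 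Finally, the normalizing inequalities $a \le -b-c-1$ and $a-b \le -c$ translate into $1+2a \le -1-2b-2c$ and $a-b \le -c$, so that in each pair the first entry is the smaller one; this turns the two invariant multisets into the two ordered pairs of the assertion. The step I expect to require the most care is the promotion from $\KK^*$-surface invariants to invariants of the bare surface: one must ensure that reversing the $\KK^*$-action, which may swap $x^+ \leftrightarrow x^-$ and possibly $x_0 \leftrightarrow x_1$, only permutes entries within each pair and never mixes the two pairs. This is precisely what the conjugacy of the two $\KK^*$-actions, combined with the intrinsic characterization of the elliptic fixed points, is designed to guarantee, so the obstacle is conceptual rather than computational.
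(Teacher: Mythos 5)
Your proposal is correct and follows essentially the same route as the paper: the fixed points via \cite{HaHaSp}*{Rem.~5.6}, the local class group orders as the determinants $\det[v_1,v_3,v_5]$, $\det[v_2,v_4,v_5]$ and the two $2\times 2$ minors via \cite{ArDeHaLa}*{Lemma~2.1.4.1, Prop.~3.3.1.5}, and the invariance argument via the characterization of $x^{\pm}$ as the fixed points in the closure of infinitely many orbits together with conjugacy of $\KK^*$-actions. Your explicit remark that the normalizing inequalities $a\le -b-c-1$ and $a-b\le -c$ sort each multiset into the stated ordered pair is a point the paper leaves implicit, but it is the same argument.
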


\begin{proof}
The same references as in the proof of
Proposition~\ref{prop:fiqs-rho1-props-1},
provide us with the description of the
fixed points and show that the local class
group orders of
$x^+$, $x^-$, $x_0$ and $x_1$ compute
as
\[
\det[v_1,v_3,v_5],
\qquad
\det[v_2,v_4,v_5],
\qquad
\det
\left[\renewcommand{\arraystretch}{.7} \arraycolsep=.7\arraycolsep 
  \begin{array}{rr}
  -1 &  -1 \\ a &  b 
  \end{array}
\right],
\qquad
-\det
\left[\renewcommand{\arraystretch}{.7} \arraycolsep=.7\arraycolsep 
  \begin{array}{rr}
  1 &  1 \\ 0 &  c
  \end{array}
\right].
\]
As mentioned in the proof of Proposition~\ref{prop:fiqs-rho1-props-1},
the fixed points $x^+$, $x^-$ are the only ones
lying in the closure of infinitely many orbits.
Moreover, each of the remaining two fixed
points $x_0$, $x_1$ lies in the closure of precisely
two non-trivial orbits.
Thus, $\{\cl(X,x^+),\cl(X,x^-)\}$ as well as
$\{\cl(X,x_0),\cl(X,x_1)\}$ are invariants of
the $\KK^*$-surface $X$.
As before, the assertion follows from the fact
that on a non-toric, rational, projective
surface any two $\KK^*$-actions
are conjugate in the automorphism group.
\end{proof}

\goodbreak

\begin{proposition}
\label{prop:fiqs2matrix-rho2}
Every full intrinsic quadric surface $X$ 
of Picard number two is isomorphic to
an $X(P)$ for precisely one matrix $P$
from Construction~\ref{constr:matrix2fiqs-rho2}.
\end{proposition}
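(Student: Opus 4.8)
The plan is to follow exactly the pattern established in the proof of Proposition~\ref{prop:fiqs2matrix-rho1}, adapting it to the larger matrix format of Construction~\ref{constr:matrix2fiqs-rho2}. By Theorem~\ref{thm:fiqs-cr}, a full intrinsic quadric surface $X$ of Picard number two has Cox ring $\KK[T_1,\ldots,T_5]/\bangle{T_1T_2+T_3T_4+T_5^2}$, so by the classification of defining matrices for $\KK^*$-surfaces (via~\cite{HaHiWr}*{Ex.~7.1} and~\cite{ArDeHaLa}*{Sec.~3.4.3}) the defining matrix $P$ is of format $3 \times 5$ with its first two rows forced to be $(-1,-1,1,1,0)$ and $(-1,-1,0,0,2)$, matching the shape of the construction. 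The free third row $(d_1,\ldots,d_5)$ then must be normalized by admissible operations into the range $b<a$, $c<0$, $a\ge 0$, $b+c\le -1$, $a-b\le -c$, $a\le -b-c-1$ specified there.

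\smallskip

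First I would apply the admissible row and column operations of~\cite{HaHaSp}*{Prop.~6.7} to bring $P$ into the normal form. Adding suitable multiples of the first two rows to the third clears the third-row entries under the columns $v_3,v_4,v_5$, leaving the third row of the form $(a,b,0,c,1)$; this uses that the columns $v_3,v_4,v_5$ span the relevant lattice directions, exactly as in the rank-one case. Next, swapping the two elliptic-fixed-point columns $v_1,v_2$ if necessary achieves slope orderedness $b<a$, and swapping $v_3,v_4$ if needed arranges $c<0$. As in Proposition~\ref{prop:fiqs2matrix-rho1}, slope orderedness for a rational $\KK^*$-surface with two elliptic fixed points forces the sign conditions on the slopes $m^\pm$; multiplying the last row by $-1$ and re-arranging, if necessary, via the previous steps then yields the normalizing inequalities $a\ge 0$, $b+c\le -1$, $a-b\le -c$ and $a\le -b-c-1$. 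This establishes existence of a matrix $P$ in the form of the construction with $X\cong X(P)$.

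\smallskip

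For uniqueness, I would invoke Proposition~\ref{prop:fiqs-rho2-props-1}: the ordered pairs $(1+2a,\,-1-2b-2c)$ and $(a-b,\,-c)$ are isomorphy invariants of $X$. Suppose $X(P)\cong X(P')$ for two normalized matrices $P,P'$ with entries $a,b,c$ and $a',b',c'$. Then
\[
(1+2a,\,-1-2b-2c) = (1+2a',\,-1-2b'-2c'),
\qquad
(a-b,\,-c) = (a'-b',\,-c').
\]
The second equality gives $c=c'$ and $a-b=a'-b'$ immediately; the first equality gives $a=a'$, hence $b=b'$ and $c=c'$, so $P=P'$. The point to check carefully is that the invariants distinguish the two \emph{ordered} pairs rather than only the unordered sets: a priori Proposition~\ref{prop:fiqs-rho2-props-1} yields only that the sets $\{\cl(X,x^+),\cl(X,x^-)\}$ and $\{\cl(X,x_0),\cl(X,x_1)\}$ are preserved. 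I would resolve this exactly as the rank-one proof does, namely by using that the normalizing inequalities pin down which element of each set is which (the slope-ordering and the sign of $c$ fix the roles of $x^+$ versus $x^-$ and of $x_0$ versus $x_1$), so that the unordered invariants suffice to recover $a,b,c$ uniquely. The main obstacle, and the place demanding the most care, is precisely confirming that the normal-form inequalities break all the residual symmetry coming from the admissible operations, so that the two permitted swaps cannot produce a second normalized matrix in the same isomorphy class.
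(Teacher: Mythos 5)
Your proposal follows essentially the same route as the paper: normalize $P$ by the admissible operations of \cite{HaHaSp}*{Prop.~6.7} (row additions, swaps within the pairs $(v_1,v_2)$ and $(v_3,v_4)$, the swap of the blocks $[v_1,v_2]$ and $[v_3,v_4]$ giving $a-b\le -c$, and negation of the last row giving $a\le -b-c-1$), then deduce uniqueness from the invariants of Proposition~\ref{prop:fiqs-rho2-props-1}, whose orderedness is exactly what the normalizing inequalities supply. The only cosmetic slip is calling $v_1,v_2$ the ``elliptic-fixed-point columns'' and folding the block swap into the sign-flip step, but the argument is the paper's argument.
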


\begin{proof}
Using again Theorem~\ref{thm:fiqs-cr}
and~\cite{HaHiWr}*{Ex.~7.1},
we see that the defining matrix~$P$
is of the format $3 \times 5$ and the
first two rows look as in the assertion:
\[
P
\ = \ 
\left[
\begin{array}{ccccc}
-1 & -1 & 1 & 1 & 0 
\\
-1 & -1 & 0 & 0 & 2 
\\
d_1 & d_2 & d_3 & d_4 & d_5
\end{array}
\right].
\]
We achieve the desired shape of $P$
by admissible operations~\cite{HaHaSp}*{Prop.~6.7}.
First, adding suitable multiples of the first two rows
to the last one yields
\[
P
\ = \ 
\left[
\begin{array}{ccccc}
-1 & -1 & 1 & 1 & 0 
\\
-1 & -1 & 0 & 0 & 2
\\
a & b & 0 & c & 1 
\end{array}
\right].
\]
Second, swapping the columns $v_1$ and $v_2$ as well as $v_3$ and $v_4$
if neccesary and re-arranging via the
first step, we achieve that $P$ is slope-ordered, meaning
\[
a > b, \qquad 0 > c.
\]
Third, swapping the first two columns blocks,
that means $[v_1,v_2]$ and $[v_3,v_4]$,
if neccesary and re-adjusting the entries,
we can ensure
\[
a - b  \le - c.
\]
As for any defining matrix of a rational $\KK^*$-surface
with two elliptic fixed points, slope orderedness implies 
\[
a+\frac{1}{2} =: m^+ > 0,
\qquad\qquad
b+c+\frac{1}{2} =: m^- < 0.
\]  
Multiplying the last row by $-1$ turns $m^{\pm}$
into $m^{\mp}$. Doing so, if necessary, and re-arranging
via the first two steps yields
\[
a
\ \le \
-b-c-1.
\]
We show that $X(P) \cong X(P')$ with matrices $P$
and $P'$ as in Construction~\ref{constr:matrix2fiqs-rho2}
implies $P = P'$.
Proposition~\ref{prop:fiqs-rho2-props-1}
yields equality of the ordered tuples 
\[
(1+2a, \, -1-2b-2c)
 = 
(1+2a', \, -1-2b'-2c'),
\quad
(a-b, \, -c)
 = 
(a'-b', -c')
\]
built from the entries of the third row of
$P$ and $P'$ respectively.
From this we directly derive $P = P'$.
\end{proof}

\begin{theorem}
\label{thm:fiqs2matrix-rho2}
For any $\iota \in \ZZ_{\ge 1}$, consider the set~$M_\iota$
of triples
$\eta = (\iota^+,\iota^-,c)$, where
$\iota^+,\iota^- \in \ZZ_{\ge 1}$ with
$\lcm(\iota^+,\iota^-) = \iota$
and $c \in \ZZ_{\le -1}$.
Define subsets
\[
\arraycolsep=.7\arraycolsep
\begin{array}{lcl}
S_{11}(2,\iota)
& := & 
\{
\eta \in M_\iota;
\ 2 \nmid \iota^+,\iota^-,
\ 3 \nmid \iota^+,\iota^-,
\ \iota^+ \le \iota^-,
\ 1-\frac{\iota^++\iota^-}{2} \le c \le -\frac{\iota^++\iota^-}{4}
\},
\\[7pt]       
S_{12}(2,\iota)
& := & 
\{
\eta \in M_\iota;
\ 2 \nmid \iota^+,\iota^-,
\ 3 \nmid \iota^+,
\ \iota^+ \le 3\iota^-,
\ 1-\frac{\iota^++3\iota^-}{2} \le c \le -\frac{\iota^++3\iota^-}{4}
\},
\\[7pt]       
S_{21}(2,\iota)
& := & 
\{
\eta \in M_\iota;
\ 2 \nmid \iota^+,\iota^-,
\ 3 \nmid \iota^-,       
\ 3\iota^+ \le \iota^-,
\ 1-\frac{3\iota^++\iota^-}{2} \le c \le -\frac{3\iota^++\iota^-}{4}
\},
\\[7pt]       
S_{22}(2,\iota)
& := & 
\{
\eta \in M_\iota;
\ 2 \nmid \iota^+,\iota^-,
\ \iota^+ \le \iota^-,
\ 1-\frac{3\iota^++3\iota^-}{2} \le c \le -\frac{3\iota^++3\iota^-}{4}
\}.
\end{array}
\]

\medskip

\noindent
Then each set $S_{ij}(2,\iota)$ provides us with a series
of defining matrices $P_\eta$ of full intrinsic quadric
surfaces:

{\small
\[
\arraycolsep=.8\arraycolsep
\begin{array}{lcl}
\eta = (\iota^+,\iota^-,c) \in S_{11}(2,\iota)\colon
& &
\eta = (\iota^+,\iota^-,c) \in S_{12}(2,\iota)\colon
\\[5pt]
P_\eta =
\left[
\begin{array}{ccccc}
-1 & -1 & 1 & 1 & 0 
\\
-1 & -1 & 0 & 0 & 2
\\
\frac{\iota^+-1}{2} & -\frac{\iota^-+1}{2} -c & 0 & c & 1 
\end{array}
\right],
&  &
P_\eta  =
\left[
\begin{array}{ccccc}
-1 & -1 & 1 & 1 & 0 
\\
-1 & -1 & 0 & 0 & 2
\\
\frac{\iota^+-1}{2} & -\frac{3\iota^-+1}{2} -c & 0 & c & 1
\end{array}
\right],
\\[30pt]
\eta = (\iota^+,\iota^-,c) \in S_{21}(2,\iota)\colon
& &
\eta = (\iota^+,\iota^-,c) \in S_{22}(2,\iota)\colon
\\[5pt]
P_\eta  =
\left[
\begin{array}{ccccc}
-1 & -1 & 1 & 1 & 0 
\\
-1 & -1 & 0 & 0 & 2
\\
\frac{3\iota^+-1}{2}  & -\frac{\iota^-+1}{2} -c  & 0 & c & 1
\end{array}
\right],
&  &
P_\eta  =
\left[
\begin{array}{ccccc}
-1 & -1 & 1 & 1 & 0 
\\
-1 & -1 & 0 & 0 & 2
\\
\frac{3\iota^+-1}{2} &  -\frac{3\iota^-+1}{2} -c & 0 & c & 1 
\end{array}
\right].
\end{array}           
\]
}

\medskip

\noindent
Each surface $X(P_\eta)$ is of Picard number two,
Gorenstein index $\iota = \lcm(\iota^+,\iota^-)$
and $\iota^+$, $\iota^-$ resp. $-c$ are the local
Gorenstein indices resp. local class group order of 
\[
x^+ \ = \ [0,1,0,1,0], \qquad
x^- \ = \ [1,0,1,0,0], \qquad
x_1 \ = \ [1,1,0,0,I].
\]
Finally, every full intrinsic quadric surface of Picard number two
and Gorenstein index~$\iota$ is isomorphic to $X(P_\eta)$ for
precisely one $P_\eta$ from the above list.
\end{theorem}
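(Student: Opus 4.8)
The plan is to follow the blueprint of the Picard number one case in Theorem~\ref{thm:fiqs2matrix-rho1}; the one genuinely new feature is that the anticanonical class now carries a factor of three, and this is exactly what produces the divisibility-by-three conditions appearing in the sets $S_{ij}(2,\iota)$. First I would invoke Proposition~\ref{prop:fiqs2matrix-rho2} to reduce to the case $X = X(P)$ with a unique normalized matrix $P$ from Construction~\ref{constr:matrix2fiqs-rho2}, so that the entries $a,b,c$ of the third row satisfy the six stated inequalities.

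Next I would pin down the anticanonical class. From the relation degree $\mu = 2w_5 = w_1+w_2 = w_3+w_4$ together with $-\mathcal{K}_X = \sum_i w_i - \mu$ (using~\cite{ArDeHaLa}*{Prop.~3.3.3.2}), one computes $-\mathcal{K}_X = 3w_5 = 3[D_5]$. The decisive point is that at the elliptic fixed point $x^+$, corresponding to $\sigma^+ = \cone(v_1,v_3,v_5)$, the restriction of $[D_5]$ generates the local class group, which by Proposition~\ref{prop:fiqs-rho2-props-1} is cyclic of order $\cl(X,x^+) = 2a+1$; analogously for $x^-$ with order $-1-2b-2c$. Feeding this into~\cite{HaHaSp}*{Prop.~8.9} to obtain the primitive linear forms $u^\pm$ representing the $\iota^\pm$-fold of $-\mathcal{K}$ then yields
\[
\iota^+ \ = \ \frac{2a+1}{\gcd(2a+1,3)}, \qquad \iota^- \ = \ \frac{-1-2b-2c}{\gcd(-1-2b-2c,3)}.
\]
Since $\cl(X,x^\pm)$ is always odd, so is $\iota^\pm$, and the four possibilities for the pair $\bigl(\gcd(2a+1,3),\,\gcd(-1-2b-2c,3)\bigr) \in \{1,3\}^2$ correspond precisely to the four families $S_{11}, S_{12}, S_{21}, S_{22}$.

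Inverting these formulas expresses $a$ and $b$ through $\iota^+,\iota^-,c$, giving exactly the third rows of the four listed matrices, and I would then translate the six inequalities of Construction~\ref{constr:matrix2fiqs-rho2} one by one into the defining inequalities of $S_{ij}(2,\iota)$: for instance $a \le -b-c-1$ becomes the ordering condition ($\iota^+ \le \iota^-$ in the $S_{11}$ case, $\iota^+ \le 3\iota^-$ in $S_{12}$, and so forth), the condition $a-b \le -c$ becomes the upper bound on $c$, and $b < a$ together with $c \le -1$ becomes its lower bound. To confirm that each $X(P_\eta)$ has Gorenstein index $\lcm(\iota^+,\iota^-)$, I would check via~\cite{HaHaSp}*{Prop.~8.8} that the two hyperbolic fixed points $x_0,x_1$ carry local Gorenstein index one, so that the global index is governed solely by $x^+$ and $x^-$; the point $x_1$ additionally furnishes the local class group order $-c$.

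The uniqueness and disjointness is where the bookkeeping must be watertight. The four families are mutually non-isomorphic because, by Proposition~\ref{prop:fiqs-rho2-props-1}, the ordered pair $(\cl(X,x^+),\cl(X,x^-)) = (2a+1,\,-1-2b-2c)$ is an isomorphy invariant, and its divisibility-by-three pattern, namely $(3\nmid,3\nmid)$, $(3\nmid,3\mid)$, $(3\mid,3\nmid)$, $(3\mid,3\mid)$, distinguishes $S_{11}, S_{12}, S_{21}, S_{22}$; in particular each normalized $P$ lands in exactly one family. Within a single family the assignment $\eta \mapsto P_\eta$ is visibly injective, and Proposition~\ref{prop:fiqs2matrix-rho2} guarantees that distinct normalized matrices give non-isomorphic surfaces, while the inequality translation of the previous step yields surjectivity onto the normalized matrices of Construction~\ref{constr:matrix2fiqs-rho2}. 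I expect the main obstacle to be exactly this final combinatorial verification: checking that the range $1-\tfrac{\,\cdot\,}{2} \le c \le -\tfrac{\,\cdot\,}{4}$ and the ordering conditions cut out precisely the admissible triples, with no overlap between families and no admissible normalized matrix omitted, which demands careful tracking of the oddness and three-divisibility constraints simultaneously across all four cases.
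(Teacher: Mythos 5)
Your proposal is correct and follows essentially the same route as the paper: reduction to the normalized matrices via Proposition~\ref{prop:fiqs2matrix-rho2}, a four-way case split governed by divisibility by three at $x^+$ and $x^-$, translation of the inequalities of Construction~\ref{constr:matrix2fiqs-rho2} into the defining conditions of the $S_{ij}(2,\iota)$, local Gorenstein index one at $x_0,x_1$ via \cite{HaHaSp}*{Prop.~8.8}, and disjointness via the ordered invariants of Proposition~\ref{prop:fiqs-rho2-props-1}. Your derivation of $\iota^\pm = \cl(X,x^\pm)/\gcd(\cl(X,x^\pm),3)$ from the observation that $-\mathcal{K}=3[D_5]$ with $[D_5]$ generating the cyclic local class group is a slightly cleaner packaging of the paper's primitivity argument for the linear forms $u^\pm$ (which yields $y^\pm z^\pm=3$), but it produces the identical case analysis and conclusions.
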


\goodbreak

\begin{proof}
Let $X$ be a full intrinsic quadric surface
of Picard number two.  
Then Proposition~\ref{prop:fiqs2matrix-rho2}
allows us to assume $X = X(P)$ with 
\[
P
\ = \ 
\left[
\begin{array}{ccccc}
-1 & -1 & 1 & 1 & 0 
\\
-1 & -1 & 0 & 0 & 2
\\
a & b & 0 & c & 1
\end{array}
\right],
\quad
\begin{array}{l}
b < a, \ c < 0, \ a \ge 0,
\\[5pt]
b+c \le -1, \ a-b \le -c, 
\\[5pt]
a \le -b-c-1.
\end{array}
\]
Consider the anticanonical divisor $-\mathcal{K}=D_3+D_4+D_5$
on $X(P)$.
The linear forms~$u^\pm$ representing the $\iota^\pm$-fold
of $-\mathcal{K}$ near $x^\pm$ are given by
{\small
\[
u^+
 = 
\left[
\iota^+, \,  \frac{(a-1)\iota^+}{1+2a}, \,  \frac{3\iota^+}{1+2a}
\right],
\quad
u^-
 = 
\left[
\frac{(2b-c+1)\iota^-}{2b + 2c + 1}, \,  \frac{(b+c-1)\iota^-}{2b+2c+1}, \,  -\frac{3\iota^-}{2b+2c+1} 
\right].
\]
}

\noindent
By the definition of the local Gorenstein index, these 
are primitive integral vectors.
Together with the fact that $\iota^\pm$ divides
$\cl(X,x^\pm)$, we obtain
\[
3\iota^+ = y^+(1+2a),
\qquad
1+2a = z^+ \iota^+,
\]
\[
3\iota^- = -y^-(2b+2c+1),
\qquad
-(2b+2c+1) = z^- \iota^-
\]
with positive integers $y^\pm$ and $z^\pm$.
We conclude $y^+z^+=3$ and $y^-z^-=3$.
This leaves us with the following four cases:

\medskip

\noindent
\emph{Case 1.1: $y^+=3$, $y^-=3$}.
Then we have $\iota^+ = 1+2a$ and
$\iota^- = -2b-2c-1$.
Solving for $a$ in the first equation,
for $b$ in the second one and substituting
gives
{\small
\[
u^+
 = 
\left[
\iota^+, \,  \frac{\iota^+-3}{2}, \,  3
\right],
\qquad
u^-
 = 
\left[
\iota^- + 3c, \,  \frac{\iota^-+3}{2}, \,  -3 
\right].
\]
}

\noindent
We conclude that $\iota^+$ as well as  $\iota^-$
is odd and none of them is divisible by three.
Substituting also in $P$ and the conditions on
its entries leads to setting $S_{11}(2,\iota)$.

\medskip

\noindent
\emph{Case 1.2: $y^+=3$, $y^-=1$}.
Then we have $\iota^+ = 1+2a$ and
$3\iota^- = -2b-2c-1$.
Solving for $a$ in the first equation,
for $b$ in the second one and substituting
gives
{\small
\[
u^+
 = 
\left[
\iota^+, \,  \frac{\iota^+-3}{2}, \,  3
\right],
\qquad
u^-
 = 
\left[
\iota^- + c, \,  \frac{\iota^-+1}{2}, \,  -1 
\right].
\]
}

\noindent
We conclude that $\iota^+$ as well as  $\iota^-$
is odd and $\iota^+$ is not divisible by three.
Substituting also in $P$ and the conditions on
its entries leads to setting $S_{12}(2,\iota)$.

\medskip

\noindent
\emph{Case 2.1: $y^+=1$, $y^-=3$}.
Then we have $3\iota^+ = 1+2a$ and
$\iota^- = -2b-2c-1$.
Solving for $a$ in the first equation,
for $b$ in the second one and substituting
gives
{\small
\[
u^+
 = 
\left[
\iota^+, \,  \frac{\iota^+-1}{2} \,  1,
\right],
\qquad
u^-
 = 
\left[
\iota^- + 3c, \,  \frac{\iota^-+3}{2}, \,  -3 
\right].
\]
}

\noindent
We conclude that $\iota^+$ as well as  $\iota^-$
is odd and $\iota^-$ is not divisible by three.
Substituting also in $P$ and the conditions on
its entries leads to setting $S_{21}(2,\iota)$.

\medskip

\noindent
\emph{Case 2.2: $y^+=1$, $y^-=1$}.
Then we have $3\iota^+ = 1+2a$ and
$3\iota^- = -2b-2c-1$.
Solving for $a$ in the first equation,
for $b$ in the second one and substituting
gives
{\small
\[
u^+
 = 
\left[
\iota^+, \,  \frac{\iota^+-1}{2}, \,  1
\right],
\qquad
u^-
 = 
\left[
\iota^- + c, \,  \frac{\iota^-+1}{2}, \,  -1
\right].
\]
}

\noindent
We conclude that $\iota^+$ as well as  $\iota^-$
is odd.
Substituting also in $P$ and the conditions on
its entries leads to setting $S_{22}(2,\iota)$.

\medskip

We showed that every full intrinsic quadric
surface of Picard number two is isomorphic
to some $X(P_\eta)$ with $P_\eta$ as in the assertion.
Moreover, $x_0$ and $x_1$ are of
local Gorenstein index one,
see~\cite{HaHaSp}*{Prop.~8.8~(iii)},
we obtain that 
$X(P_\eta)$ has Gorenstein index
$\iota = \lcm(\iota^+,\iota^-)$.
Conversely, one directly checks that every
matrix~$P$ from the assertion defines a
full intrinsic quadric
surface of Picard number two and
Gorenstein index
$\iota = \lcm(\iota^+,\iota^-)$.

Finally, we want to see that the matrices $P_\eta$ 
listed in the assertion define pairwise non-isomorphic $X(P_\eta)$.
Due to Proposition~\ref{prop:fiqs2matrix-rho2},
this means to show that the sets
$S_{ij}(2,\iota)$ are pairwise disjoint.
With the aid of Proposition~\ref{prop:fiqs-rho2-props-1},
we compare the local Gorenstein indices $\iota^\pm$
and the local class group orders $\cl(X,x^\pm)$:
\renewcommand{\arraystretch}{1.5}
\begin{center}
\begin{tabular}{c|c|c|c|c}
&
$S_{11}(2,\iota)$
&
$S_{12}(2,\iota)$
&
$S_{21}(2,\iota)$
&
$S_{22}(2,\iota)$
\\
\hline
$(\iota^+,\cl(X,x^+))$
&
$(\iota^+,\iota^+)$
&
$(\iota^+,\iota^+)$
&
$(\iota^+,3\iota^+)$
&
$(\iota^+,3\iota^+)$
\\
\hline
$(\iota^-,\cl(X,x^-))$
&
$(\iota^-,\iota^-)$
&
$(\iota^-,3\iota^-)$
&
$(\iota^-,\iota^-)$
&
$(\iota^-,3\iota^-)$
\end{tabular}
\end{center}

\noindent
The listed pairs are invariants of the surface
$X(P_\eta)$ up to switching $x^+$ and $x^-$.
Thus, we see that $S_{11}(2,\iota)$ as well as
$S_{22}(2,\iota)$ has trivial intersection
with any other $S_{ij}(2,\iota)$.
For $S_{12}(2,\iota)$ observe $\iota^+ < 3 \iota^-$,
as we have $3 \nmid \iota^+$.
Similarly, $3\iota^+ < \iota^-$ holds for
$S_{21}(3,\iota)$.
Thus, in both cases, $\cl(X,x^+)$ is the strictly
smallest of $\cl(X,x^\pm)$.
Consequently, $S_{12}(2,\iota)$ and $S_{21}(2,\iota)$
intersect trivially.
\end{proof}

\section{Picard number three}
\label{sec:rho-3}

The main result of this section,
Theorem~\ref{thm:fiqs2matrix-rho3},
provides the description of all
full intrinsic quadric surfaces
of Picard number three in terms of
the local Gorenstein indices of
two of their possibly singular
and the local class group orders
of two further possibly singular
points.

\begin{construction}[Full intrinsic quadric
surfaces $X$ of Picard number three
as $\KK^*$-surfaces]
\label{constr:matrix2fiqs-rho3}
Consider an integral matrix of the
form
\[
P
 := 
[v_1,v_2,v_3,v_4,v_5,v_6]
 :=  
\left[
\begin{array}{cccccc}
-1 & -1 & 1 & 1 & 0 & 0
\\
-1 & -1 & 0 & 0 & 1 & 1
\\
a & b  & 0 & c & 0 & d
\end{array}
\right],
\quad
\begin{array}{l}
a > b, \ 0 > c, \ 0 > d,
\\
a - b \ge -c \ge -d,
\\
b+c+d < 0 < a,
\\
a \le -b-c-d.
\end{array}
\]
Let $Z$ be the toric variety arising from the fan 
$\Sigma$ in $\ZZ^3$ with generator matrix $P$
and the maximal cones 
\[
\sigma^+ := \cone(v_1,v_3,v_5),
\qquad
\sigma^- := \cone(v_2,v_4,v_6),
\]
\[
\tau_0 := \cone(v_1,v_2),
\quad
\tau_1 := \cone(v_3,v_4),
\quad
\tau_2 := \cone(v_5,v_6).
\]
Denote by $U_1,U_2,U_3$ the coordinate
functions on the standard 3-torus $\TT^3 \subseteq Z$.
Then we obtain a normal, non-toric, rational, projective
surface
\[
X \ := \ X(P) \ := \ \overline{V(h)} \ \subseteq \ Z,
\qquad
h \ := \ 1 + U_1 + U_2 \ \in \mathcal{O}(\TT^3)
\]
with a $\KK^*$-action, given on $V(h) \subseteq \TT^3$ by
$t \cdot x = (x_1,x_2,tx_3)$.
The divisor class group of~$X$ equals that
of $Z$ and is given by 
\[
\Cl(X) \ \cong \ \Cl(Z)
\ \cong \ 
K
\ := \
\ZZ^3 \times \ZZ \, / \gcd(a,b,c,d) \ZZ.
\]
Moreover, denoting by $Q \colon\ZZ^6 \to K$ the projection,
we obtain the following description of the Cox ring of $X$
as a graded algebra:
\[
\mathcal{R}(X) 
\ \cong \   
\KK[T_1, \ldots, T_6] / \bangle{T_1T_2+T_3T_4+T_5T_6},
\qquad
\deg(T_i) = Q(e_i) = [D_i],
\]
where $D_i \subseteq X$ is the prime divisor on $X$ obtained by
intersecting $X$ with the toric prime divisor of $Z$
given by the ray through $v_i$ and $[D_i] \in \Cl(X)$ denotes
its class.
\end{construction}

\begin{proof}
By definition, the columns $v_1,\ldots,v_6$ of $P$ are pairwise
different primitive integral vectors. Moreover, $v_1,\ldots,v_6$
generate $\QQ^3$ as a convex cone, as we have
\[
v_1 + v_3 + v_5 =  [0,0,a], \ a > 0,
\qquad
v_2 + v_4 + v_6 =  [0,0,b+c+d], \ b+c+d < 0.
\]
Consequently, $P$ is a defining matrix of a rational projective
$\KK^*$-surface in the sense of~\cite{ArDeHaLa}*{Constr.~3.4.3.1}.
The statements follow from those
of~\cite{ArDeHaLa}*{Sec.~3.4.3}.
\end{proof}

\begin{proposition}
\label{prop:fiqs-rho3-props-1}
Let $X = X(P)$ arise from
Construction~\ref{constr:matrix2fiqs-rho3}.
The fixed points of the $\KK^*$-action on $X$ are
given in Cox coordinates by
\[
x^+ \ := \ [0,1,0,1,0,1],
\qquad
x^- \ := \ [1,0,1,0,1,0],
\]
\[
x_0 \ := \ [0,0,1,1,1,-1],
\quad
x_1 \ := \ [1,1,0,0,1,-1],
\quad
x_2 \ := \ [1,1,1,-1,0,0].
\]
Moreover, the orders of the local class groups of
the fixed points of the $\KK^*$-action
are given by 
\[
\cl(X,x^+) = a,
\qquad
\cl(X,x^-) = -b-c-d,
\]
\[
\cl(X,x_0) = a-b,
\quad
\cl(X,x_1) = -c,
\quad
\cl(X,x_2) = -d.
\]
Finally, the ordered tuples $(a, \, -b-c-d)$
and $(a-b, \, -c, \, -d)$ are isomorphy invariants
of the algebraic surface $X$.
\end{proposition}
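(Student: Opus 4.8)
The plan is to follow the same template that carried the proofs of Propositions~\ref{prop:fiqs-rho1-props-1} and~\ref{prop:fiqs-rho2-props-1}, now with five fixed points instead of three or four. First I would invoke~\cite{HaHaSp}*{Rem.~5.6} to justify the Cox-coordinate description of the fixed points $x^\pm$, $x_0$, $x_1$, $x_2$: the two \emph{elliptic} fixed points $x^+$, $x^-$ sit at the tips of the cones $\sigma^+ = \cone(v_1,v_3,v_5)$ and $\sigma^- = \cone(v_2,v_4,v_6)$, while the three \emph{hyperbolic} fixed points $x_0$, $x_1$, $x_2$ correspond to the two-dimensional cones $\tau_0 = \cone(v_1,v_2)$, $\tau_1 = \cone(v_3,v_4)$, $\tau_2 = \cone(v_5,v_6)$. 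In Cox coordinates each $x_j$ has support exactly on the two columns spanning $\tau_j$ together with the pattern forced by the quadric $T_1T_2+T_3T_4+T_5T_6$, which pins down the entry $I$ (or $-1$) in the last slot; this is a direct transcription of the lower Picard-number cases.

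Next I would compute the local class group orders via~\cite{ArDeHaLa}*{Lemma~2.1.4.1, Prop.~3.3.1.5}, exactly as before. For the elliptic points, $\cl(X,x^\pm)$ is the lattice index of the cone generated by the three rays of $\sigma^\pm$, i.e.
\[
\cl(X,x^+) = \det[v_1,v_3,v_5], \qquad \cl(X,x^-) = \det[v_2,v_4,v_6].
\]
Plugging in the columns of $P$, one gets $\det[v_1,v_3,v_5]=a$ and $\det[v_2,v_4,v_6]=-b-c-d$ (up to the sign fixed by the orientation, controlled by the inequalities $a>0$ and $b+c+d<0$). For the three hyperbolic points, the order is computed from the $2\times 2$ minor built from the last two rows of the two columns spanning $\tau_j$, giving
\[
\cl(X,x_0) = \det\!\begin{bmatrix} -1 & -1 \\ a & b\end{bmatrix} = a-b,
\quad
\cl(X,x_1) = -\det\!\begin{bmatrix} 1 & 1 \\ 0 & c\end{bmatrix} = -c,
\quad
\cl(X,x_2) = -\det\!\begin{bmatrix} 1 & 1 \\ 0 & d\end{bmatrix} = -d,
\]
all positive thanks to $a>b$, $c<0$, $d<0$. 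These are routine determinant evaluations.

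The genuinely new content, and the main obstacle, is the invariance statement: that $(a,-b-c-d)$ and $(a-b,-c,-d)$ are isomorphy invariants of the abstract surface $X$, not merely of the chosen presentation. I would argue as in the earlier cases using~\cite{ArDeHaLa}*{Prop.~5.4.1.9}: the elliptic points $x^\pm$ are the unique $\KK^*$-fixed points lying in the closure of infinitely many orbits, so the unordered pair $\{\cl(X,x^+),\cl(X,x^-)\}$ is a $\KK^*$-invariant. The new wrinkle is that there are now \emph{three} hyperbolic points rather than one, so I cannot simply say ``the remaining fixed point is intrinsic.'' Instead I would note that each hyperbolic point lies in the closure of exactly two non-trivial orbits, so $\{\cl(X,x_0),\cl(X,x_1),\cl(X,x_2)\}$ is an unordered $\KK^*$-invariant; the passage from unordered to ordered tuples, and from $\KK^*$-invariant to isomorphy invariant, then follows from the fact that on a non-toric rational projective surface any two $\KK^*$-actions are conjugate in the automorphism group, exactly as invoked in Proposition~\ref{prop:fiqs-rho2-props-1}. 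The subtle point I would want to check carefully is that the inequalities of Construction~\ref{constr:matrix2fiqs-rho3} (notably $a-b\ge -c\ge -d$) genuinely order the three hyperbolic invariants, so that recovering the ordered triple $(a-b,-c,-d)$ from the unordered set is unambiguous; this is what makes the tuple, and not just the set, an invariant.
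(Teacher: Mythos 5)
Your proposal is correct and follows essentially the same route as the paper: the same references for the fixed-point description, the same determinant computations for the local class group orders, and the same invariance argument via the orbit-closure characterization of $x^\pm$ versus $x_0,x_1,x_2$ combined with conjugacy of $\KK^*$-actions. Your closing remark that the normalization inequalities $a\le -b-c-d$ and $a-b\ge -c\ge -d$ are what upgrade the unordered (multi)sets to ordered tuples is a point the paper leaves implicit, and it is the right thing to check.
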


\begin{proof}
As in the previous section,
the references from the proof of
Proposition~\ref{prop:fiqs-rho1-props-1}
deliver the description of the fixed points
and show that the local class
group orders of
$x^+$, $x^-$, $x_0$, $x_1$ and $x_2$ are
\[
\det[v_1,v_3,v_5],
\qquad\qquad
\det[v_2,v_4,v_6],
\]
\[
\det
\left[\renewcommand{\arraystretch}{.7} \arraycolsep=.7\arraycolsep 
  \begin{array}{rr}
  -1 &  -1 \\ a &  b 
  \end{array}
\right],
\qquad
-\det
\left[\renewcommand{\arraystretch}{.7} \arraycolsep=.7\arraycolsep 
  \begin{array}{rr}
  1 &  1 \\ 0 &  c
  \end{array}
\right],
\qquad
-\det
\left[\renewcommand{\arraystretch}{.7} \arraycolsep=.7\arraycolsep 
  \begin{array}{rr}
  1 &  1 \\ 0 &  d
  \end{array}
\right].
\]
Simlarly as in the corresponding earlier proofs,
$x^+$, $x^-$ are the only fixed points
lying in the closure of infinitely many orbits
and each of $x_0$, $x_1$, $x_2$ lies in the
closure of precisely two non-trivial orbits.
Thus, the sets $\{\cl(X,x^+),\cl(X,x^-)\}$ and
$\{\cl(X,x_0),\cl(X,x_1),\cl(X,x_2)\}$ are
invariants of the $\KK^*$-surface $X$.
Again, the assertion follows from the fact
that on a non-toric, rational, projective,
surface any two $\KK^*$-actions
are conjugate in the automorphism group.
\end{proof}

\begin{proposition}
\label{prop:fiqs2matrix-rho3}
Every full intrinsic quadric surface $X$ 
of Picard number three is isomorphic to
an $X(P)$ for precisely one matrix $P$
from Construction~\ref{constr:matrix2fiqs-rho3}.
\end{proposition}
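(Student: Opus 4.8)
The plan is to follow the exact pattern established for Picard numbers one and two (Propositions~\ref{prop:fiqs2matrix-rho1} and~\ref{prop:fiqs2matrix-rho2}), adapting each step to the $3 \times 6$ format. First I would invoke Theorem~\ref{thm:fiqs-cr} together with~\cite{HaHiWr}*{Ex.~7.1} to fix the format of the defining matrix as $3 \times 6$ with the first two rows already in the prescribed shape, leaving the third row $(d_1,\ldots,d_6)$ free. The Cox ring relation $T_1T_2+T_3T_4+T_5T_6$ pairs the columns into three blocks $[v_1,v_2]$, $[v_3,v_4]$, $[v_5,v_6]$, and the $\KK^*$-surface interpretation forces $d_3 = d_5 = 0$ among one representative of each of two blocks after normalization.

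Next I would apply the admissible operations of~\cite{HaHaSp}*{Prop.~6.7} in sequence, exactly paralleling the $\rho = 2$ argument. Adding suitable integer multiples of the first two rows to the third clears the appropriate entries and brings $P$ into the form with third row $(a,b,0,c,0,d)$. Slope-orderedness is then imposed by swapping columns \emph{within} each block (i.e. $v_1 \leftrightarrow v_2$, $v_3 \leftrightarrow v_4$, $v_5 \leftrightarrow v_6$) and re-adjusting, yielding $a > b$, $0 > c$, $0 > d$. The block-permutation freedom—swapping the two ``negative'' blocks $[v_3,v_4]$ and $[v_5,v_6]$—is what enforces the ordering $-c \ge -d$, and swapping these against the $[v_1,v_2]$ block enforces $a - b \ge -c$. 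Finally, multiplying the third row by $-1$ and re-sorting via the earlier steps pins down the sign normalization $a \le -b-c-d$, just as the $m^\pm$ analysis via~\cite{HaHaSp}*{Rem.~7.5} produced $a \le -b-c-d-1$ plus the strict inequalities $b+c+d < 0 < a$. This places $P$ squarely in the shape of Construction~\ref{constr:matrix2fiqs-rho3}.

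For uniqueness, I would argue that $X(P) \cong X(P')$ with both matrices already in the normalized form of the construction forces $P = P'$. Proposition~\ref{prop:fiqs-rho3-props-1} supplies the isomorphy invariants: the ordered pair $(a, -b-c-d)$ and the ordered triple $(a-b, -c, -d)$. Unwinding these equalities of tuples across $P$ and $P'$ recovers each entry $a, b, c, d$ individually, giving $P = P'$.

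The main obstacle I anticipate is the bookkeeping in the normalization step: unlike the $\rho = 2$ case, there are now \emph{two} negative blocks $[v_3,v_4]$ and $[v_5,v_6]$ rather than one, so the admissible column permutations form a larger group and one must verify that the inequality chain $a - b \ge -c \ge -d$ together with the sign-flip normalization $a \le -b-c-d$ genuinely selects a \emph{unique} representative from each orbit, with no residual symmetry left unbroken. In particular one must check that after fixing slope-orderedness within blocks, the remaining freedom is exactly the $S_2$-action permuting the two negative blocks plus the global sign flip, and that the stated inequalities constitute a strict fundamental domain for this action—so that the invariants from Proposition~\ref{prop:fiqs-rho3-props-1} suffice to distinguish all normalized matrices. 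The delicate boundary cases, where some of the inequalities $-c \ge -d$ or $a-b \ge -c$ become equalities, require separate attention to confirm that the invariant tuples still pin down $P$ without ambiguity.
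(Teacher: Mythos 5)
Your proposal follows the paper's proof essentially verbatim: the same sequence of admissible operations (row additions, in-block column swaps for slope-ordering, block swaps for $a-b \ge -c \ge -d$, and the sign flip for $a \le -b-c-d$), followed by the same uniqueness argument via the ordered invariant tuples $(a,-b-c-d)$ and $(a-b,-c,-d)$ from Proposition~\ref{prop:fiqs-rho3-props-1}. The residual-symmetry worry you raise at the end is already resolved by that invariant argument (note only that in the $\rho=3$ case $m^\pm$ carry no $\tfrac{1}{2}$-shifts, so the normalization is $a \le -b-c-d$ rather than $a \le -b-c-d-1$).
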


\begin{proof}
Applying once more Theorem~\ref{thm:fiqs-cr}
and~\cite{HaHiWr}*{Ex.~7.1} yields
that the defining matrix~$P$
is of the format $3 \times 6$ and the
first two rows look as wanted:
\[
P
\ = \ 
\left[
\begin{array}{cccccc}
-1 & -1 & 1 & 1 & 0 & 0
\\
-1 & -1 & 0 & 0 & 1 & 1
\\
d_1 & d_2 & d_3 & d_4 & d_5 & d_6
\end{array}
\right].
\]
Again, suitable admissible operations
bring us to the setting of 
Construction~\ref{constr:matrix2fiqs-rho3}. 
First, adding suitable multiples of the first two rows
to the last one, we achieve
\[
P
\ = \ 
\left[
\begin{array}{cccccc}
-1 & -1 & 1 & 1 & 0 & 0
\\
-1 & -1 & 0 & 0 & 1 & 1
\\
a & b & 0 & c & 0 & d
\end{array}
\right].
\]
Second, swapping columns inside the pairs $(v_1,v_2)$,
$(v_3,v_4)$ and $(v_5,v_6)$ and re-arranging via the
first step, we achieve that $P$ is slope-ordered, meaning
\[
a > b, \quad 0 > c, \quad 0 > d.
\]
Third, suitable swapping the columns blocks
$[v_1,v_2]$, $[v_3,v_4]$ and $[v_5,v_6]$
and re-adjusting the entries, we can ensure
\[
a - b  \ge - c \ge -d.
\]
As for any defining matrix of a rational $\KK^*$-surface
with two elliptic fixed points, slope orderedness implies 
\[
a =: m^+ > 0,
\qquad\qquad
b+c+d =: m^- < 0.
\]  
Multiplying the last row by $-1$ turns $m^{\pm}$
into $m^{\mp}$. Doing so, if necessary, and re-arranging
via the first two steps yields
\[
a
\ \le \
-b-c-d.
\]

We show that $X(P) \cong X(P')$ with matrices $P$
and $P'$ as in Construction~\ref{constr:matrix2fiqs-rho3}
implies $P = P'$.
Proposition~\ref{prop:fiqs-rho3-props-1}
yields equality of the ordered tuples 
\[
(a, \, -b-c-d)
\ = \
(a', \, -b'-c'-d'),
\qquad
(a-b, \, -c, \, -d)
\ = \
(a'-b', \, -c', \, -d')
\]
built from the entries of the third row of
$P$ and $P'$ respectively.
From this we directly derive $P = P'$.
\end{proof}

\begin{theorem}
\label{thm:fiqs2matrix-rho3}
For any $\iota \in \ZZ_{\ge 1}$, consider
the set~$M_\iota$ of 4-tuples
$\eta = (\iota^+,\iota^-,c,d)$, where
$\iota^+,\iota^- \in \ZZ_{\ge 1}$ with
$\lcm(\iota^+,\iota^-) = \iota$
and $c,d \in \ZZ_{\le -1}$.
Define subsets
\[
\begin{array}{lcl}
S_{11}(3,\iota)
& := & 
\{
\eta \in M_\iota;
\ 2 \nmid \iota^+,\iota^-,
\ \iota^+ \le \iota^-,
\ - \iota^+ - \iota^- \le 2c + d,
\ c \le d  \le -1
\},
\\[7pt]       
S_{12}(3,\iota)
& := &
\{
\eta \in M_\iota;
\ 2 \nmid \iota^+,
\ \iota^+ \le 2\iota^-,
\ - \iota^+ - 2\iota^- \le 2c + d,
\ c \le d  \le -1
\},
\\[7pt]       
S_{21}(3,\iota)
& := &
\{
\eta \in M_\iota;
\ 2 \nmid \iota^-,
\ 2\iota^+ \le \iota^-,
\ - 2\iota^+ - \iota^- \le 2c + d,
\ c \le d  \le -1
\}, 
\\[7pt]       
S_{22}(3,\iota)
& := & 
\{
\eta \in M_\iota;
\ \iota^+ \le \iota^-,
\ - 2\iota^+ - 2\iota^- \le 2c + d,
\ c \le d  \le -1
\}.
\end{array}
\]

\noindent
Then each set $S_{ij}(3,\iota)$ provides us with a series
of defining matrices $P_\eta$ of full intrinsic quadric
surfaces:

{\small
\[
\arraycolsep=.7\arraycolsep
\begin{array}{lcl}
\eta = (\iota^+,\iota^-,c,d) \in S_{11}(3,\iota)\colon
& &
\eta = (\iota^+,\iota^-,c,d) \in S_{12}(3,\iota)\colon
\\[5pt]
P_\eta =
\left[
\begin{array}{cccccc}
-1 & -1 & 1 & 1 & 0 & 0
\\
-1 & -1 & 0 & 0 & 1 & 1
\\
\iota^+ & -\iota^- -c-d & 0 & c & 0 & d 
\end{array}
\right],
&  &
P_\eta  =
\left[
\begin{array}{cccccc}
-1 & -1 & 1 & 1 & 0 & 0
\\
-1 & -1 & 0 & 0 & 1 & 1
\\
\iota^+ & -2\iota^- -c-d & 0 & c & 0 & d 
\end{array}
\right],
\end{array}
\]


\[
\arraycolsep=.7\arraycolsep
\begin{array}{lcl}
\eta = (\iota^+,\iota^-,c,d) \in S_{21}(3,\iota)\colon
& &
\eta = (\iota^+,\iota^-,c,d) \in S_{22}(3,\iota)\colon
\\[5pt]
P_\eta  =
\left[
\begin{array}{cccccc}
-1 & -1 & 1 & 1 & 0 & 0
\\
-1 & -1 & 0 & 0 & 0 & 0
\\
2\iota^+ & -\iota^- -c-d & 0 & c & 0 & d 
\end{array}
\right],
&  &
P_\eta  =
\left[
\begin{array}{cccccc}
-1 & -1 & 1 & 1 & 0 & 0
\\
-1 & -1 & 0 & 0 & 1 & 1
\\
2\iota^+ & -2\iota^- -c -d & 0 & c & 0 & d 
\end{array}
\right].
\end{array}           
\]
}

\noindent
Each $X(P_\eta)$ is of Picard number three,
Gorenstein index $\iota = \lcm(\iota^+,\iota^-)$
and $\iota^+$,~$\iota^-$, resp. $-c$, $-d$ are the local
Gorenstein indices resp. local class group orders
of 
\[
x^+ \ = \ [0,1,0,1,0,1], \qquad
x^- \ = \ [1,0,1,0,1,0],
\]
\[
x_1 \ = \ [1,1,0,0,1,-1], \qquad 
x_2 \ = \ [1,1,1,-1,0,0].
\]
Finally, every full intrinsic quadric surface of Picard number three
and Gorenstein index~$\iota$ is isomorphic to $X(P_\eta)$ for
precisely one $P_\eta$ from the above list.
\end{theorem}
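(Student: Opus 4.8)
The plan is to follow the established template from the Picard numbers one and two cases, which the paper explicitly notes follow a common pattern. By Proposition~\ref{prop:fiqs2matrix-rho3}, every full intrinsic quadric surface of Picard number three is isomorphic to a unique $X(P)$ with $P$ in the normalized shape of Construction~\ref{constr:matrix2fiqs-rho3}, so the entire argument reduces to re-expressing the constraints on $(a,b,c,d)$ in terms of the local invariants. First I would invoke the anticanonical divisor $-\mathcal{K} = D_3 + D_4 + D_5 + D_6$ (coming from~\cite{ArDeHaLa}*{Prop.~3.3.3.2}) and use~\cite{HaHaSp}*{Prop.~8.9} to write down explicit linear forms $u^\pm$ representing the $\iota^\pm$-fold of $-\mathcal{K}$ near the elliptic fixed points $x^\pm$. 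Since these are primitive integral vectors by definition of the local Gorenstein index, and since $\iota^\pm$ divides $\cl(X,x^\pm) = a$ resp. $-b-c-d$ (Proposition~\ref{prop:fiqs-rho3-props-1}), the integrality and primitivity force divisibility relations of the form $a = z^+\iota^+$ and a denominator-clearing equation yielding $y^+ z^+ = k$ for some small constant $k$ determined by the third coordinate of $u^+$.

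The key step is the case analysis driven by these products $y^\pm z^\pm$. In Picard number two the relevant constant was $3$, giving two choices each and hence four cases matching $S_{11},\dots,S_{22}$; here I expect the analogous constant to be $2$ (since the fifth and sixth columns carry a $1$ rather than a $2$ in the second row, the third coordinate of $u^\pm$ should scale by $2$ instead of $3$), so that $y^+z^+ = 2$ and $y^-z^- = 2$ each split into exactly two subcases according to whether $\iota^\pm$ is odd or even-with-a-parity-refinement. This produces the four families indexed by $(i,j)$. In each case I would solve for $a$ (resp. $b$) from the divisibility relation, substitute back into $P$ and into the normalization inequalities of Construction~\ref{constr:matrix2fiqs-rho3}, and read off exactly the defining inequalities of $S_{ij}(3,\iota)$, including the constraint $-\alpha\iota^+ - \beta\iota^- \le 2c+d$ and $c \le d \le -1$ that encode $a \le -b-c-d$ and the slope-ordering $a-b \ge -c \ge -d$.

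For the converse I would check directly that each listed $P_\eta$ satisfies the hypotheses of Construction~\ref{constr:matrix2fiqs-rho3}, hence defines a full intrinsic quadric surface of Picard number three; then, using that $x_0$, $x_1$, $x_2$ have local Gorenstein index one by~\cite{HaHaSp}*{Prop.~8.8}, conclude that the global Gorenstein index is $\lcm(\iota^+,\iota^-) = \iota$, with $-c$, $-d$ recovered as $\cl(X,x_1)$, $\cl(X,x_2)$ via Proposition~\ref{prop:fiqs-rho3-props-1}. Finally, to see that distinct $S_{ij}(3,\iota)$ give non-isomorphic surfaces, I would tabulate the pairs $(\iota^\pm, \cl(X,x^\pm))$ as in the rank-two proof: the families are separated by whether $\cl(X,x^+)/\iota^+$ and $\cl(X,x^-)/\iota^-$ equal $1$ or $2$, and the borderline overlaps between $S_{12}$ and $S_{21}$ are ruled out by a parity/size argument analogous to the one using $3 \nmid \iota^+$ in the rank-two case.

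The main obstacle I anticipate is twofold. First, the bookkeeping is genuinely heavier than in lower Picard number: there are now two free class-group parameters $c,d$ rather than one, and translating the three-term normalization chain $a-b \ge -c \ge -d$ together with $a \le -b-c-d$ into the single clean inequality $-\alpha\iota^+-\beta\iota^- \le 2c+d$ with $c \le d \le -1$ requires care about which substitution of $a,b$ is being used in each case. Second, and more subtly, I must verify that the separating invariants in the disjointness table genuinely distinguish all four families even at the boundary; the rank-two argument relied on $3 \nmid \iota^+$ to make one class group order strictly smaller, and I would need to confirm the correct divisibility/parity hypothesis ($2 \nmid \iota^+$ or $2 \nmid \iota^-$) appearing in $S_{12}$ and $S_{21}$ plays the same role of breaking the potential symmetry between $x^+$ and $x^-$.
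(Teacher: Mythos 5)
Your proposal matches the paper's proof essentially step for step: the same reduction via Proposition~\ref{prop:fiqs2matrix-rho3}, the same linear forms $u^\pm$ whose third coordinates $2\iota^\pm/\cl(X,x^\pm)$ force $y^\pm z^\pm = 2$ and hence the four-case split, the same converse check using that $x_0,x_1,x_2$ have local Gorenstein index one, and the same disjointness table of $(\iota^\pm,\cl(X,x^\pm))$ with the oddness of $\iota^+$ (resp.\ $\iota^-$) breaking the $S_{12}$/$S_{21}$ symmetry. You even correctly anticipated that the relevant constant drops from $3$ to $2$ relative to the Picard number two case, so there is nothing substantive to add.
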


\goodbreak

\begin{proof}
Let $X$ be a full intrinsic quadric surface
of Picard number three.  
Then Construction~\ref{constr:matrix2fiqs-rho3}
and Proposition~\ref{prop:fiqs2matrix-rho3}
allow us to assume $X = X(P)$ with
\[
P
\ = \ 
\left[
\begin{array}{cccccc}
-1 & -1 & 1 & 1 & 0 & 0
\\
-1 & -1 & 0 & 0 & 1 & 1
\\
a & b  & 0 & c & 0 & d
\end{array}
\right],
\qquad
\begin{array}{l}
a > b, \ 0 > c, \ 0 > d,
\\
a - b \ge -c \ge -d,
\\
b+c+d < 0 < a,
\\
a \le -b-c-d.
\end{array}
\]
Consider anticanonical divisor
$-\mathcal{K} = D_3+D_4+D_5+D_6$
on $X(P)$.
The linear forms $u^\pm$ representing
the $\iota^\pm$-fold of $-\mathcal{K}$
near $x^\pm$ are given as

{\small
\[
u^+
 = 
\left[
\iota^+, \,  \iota^+, \,  \frac{2\iota^+}{a}
\right],
\qquad
u^-
 = 
\left[
\frac{(b-c+d)\iota^-}{b+c+d}, \,  \frac{(b+c-d)\iota^-}{b+c+d}, \,  -\frac{2\iota^-}{b+c+d} 
\right].
\]
}

\noindent
By the definition of the local Gorenstein index, these 
are primitive integral vectors.
Together with the fact that $\iota^\pm$ divides
$\cl(X,x^\pm)$, we obtain
\[
2\iota^+ = y^+a,
\qquad
a = z^+ \iota^+,
\]
\[
2\iota^- = -y^-(b+c+d),
\qquad
-(b+c+d) = z^- \iota^-
\]
with positive integers $y^\pm$ and $z^\pm$.
We conclude $y^+z^+=2$ and $y^-z^-=2$.
The possible constellations of $(y^+,y^-)$
yield the following four cases:

\medskip

\noindent
\emph{Case 1.1: $a=\iota^+$, $b = -\iota^--c-d$}.
Inserting this, we see that $P$
arises from $S_{11}(3,\iota)$ and its entries
satisfy the required estimates.
Moreover, $u^\pm$ become

{\small
\[
u^+
 = 
\left[
\iota^+, \,  \iota^+, \,  2 
\right],
\qquad\qquad
u^-
 = 
\left[
2c+\iota^-, \,  2d+\iota^-, \,  -2 
\right].
\]
}

\noindent
As these are integral primitive vectors,
we see that $\iota^+$ as well as $\iota^-$
are odd and that $\iota^\pm$ are indeed
the local Gorenstein indices of $x^\pm$.

\medskip

\noindent
\emph{Case 1.2: $a=\iota^+$, $b = -2\iota^--c-d$}.
As in the previous subcase, inserting shows that
$P$ stems from $S_{12}(3,\iota)$.
Note that this time  we have

{\small
\[
u^+
 = 
\left[
\iota^+, \,  \iota^+, \,  2 
\right],
\qquad\qquad
u^-
 = 
\left[
c+\iota^-, \,  d+\iota^-, \,  -1
\right].
\]
}

\noindent
Thus, $\iota^+$ is odd and we have no divisibility
condition on $\iota^-$.
As before, we obtain that $\iota^\pm$ are indeed
the local Gorenstein indices of $x^\pm$.

\medskip

\noindent
\emph{Case 2.1: $a=2\iota^+$, $b = -\iota^--c-d$}.
Inserting shows that $P$ is given by 
$S_{21}(3,\iota)$ and its entries
satisfy the required estimates.
Moreover, we have 

{\small
\[
u^+
 = 
\left[
\iota^+, \,  \iota^+, \,  1
\right],
\qquad\qquad
u^-
 = 
\left[
2c+\iota^-, \,  2d+2\iota^++\iota^-, \,  -2 
\right].
\]
}

\noindent
These must be integral primitive vectors.
Consequently,  $\iota^-$ is odd and we
obtain that $\iota^\pm$ are indeed
the local Gorenstein indices of $x^\pm$.

\medskip

\noindent
\emph{Case 2.2: $a=2\iota^+$, $b = -2\iota^--c-d$}.
Inserting shows that the matrix $P$ arises from
$S_{22}(3,\iota)$.
Moreover, the linear forms $u^\pm$ are given by

{\small
\[
u^+
 = 
\left[
\iota^+, \,  \iota^+, \,  1
\right],
\qquad\qquad
u^-
 = 
\left[
c+\iota^-, \,  d+\iota^++\iota^-, \,  -1
\right].
\]
}

\noindent
Thus, there are no divisibility
conditions on $\iota^\pm$ and we
see that $\iota^\pm$ are indeed
the local Gorenstein indices of $x^\pm$.

\medskip

We showed that every full intrinsic quadric
surface of Picard number three is isomorphic
to some $X(P)$ with $P$ as in the assertion.
Moreover, as $x_0,x_1,x_2 \in X(P)$ are all of
local Gorenstein index one,
see~\cite{HaHaSp}*{Prop.~8.9~(iii)},
we obtain that 
$X(P)$ has Gorenstein index
$\iota = \lcm(\iota^+,\iota^-)$.
Conversely, one directly checks that every
matrix~$P$ from the assertion defines a
full intrinsic quadric
surface of Picard number three and
Gorenstein index
$\iota = \lcm(\iota^+,\iota^-)$.

Finally, we want to see that the matrices $P$ 
listed in the assertion define pairwise non-isomorphic $X(P)$.
According to Proposition~\ref{prop:fiqs2matrix-rho3},
this amounts to showing that the sets
$S_{ij}(3,\iota)$ are pairwise disjoint.
We use Proposition~\ref{prop:fiqs-rho3-props-1}
to compare the local Gorenstein indices $\iota^\pm$
and the local class group orders $\cl(X,x^\pm)$:
\renewcommand{\arraystretch}{1.5}
\begin{center}
\begin{tabular}{c|c|c|c|c}
&
$S_{11}(3,\iota)$
&
$S_{12}(3,\iota)$
&
$S_{21}(3,\iota)$
&
$S_{22}(3,\iota)$
\\
\hline
$(\iota^+,\cl(X,x^+))$
&
$(\iota^+,\iota^+)$
&
$(\iota^+,\iota^+)$
&
$(\iota^+,2\iota^+)$
&
$(\iota^+,2\iota^+)$
\\
\hline
$(\iota^-,\cl(X,x^-))$
&
$(\iota^-,\iota^-)$
&
$(\iota^-,2\iota^-)$
&
$(\iota^-,\iota^-)$
&
$(\iota^-,2\iota^-)$
\end{tabular}
\end{center}

\noindent
The listed pairs are invariants of the surface
up to switching $x^+$ and $x^-$.
Thus, we see that $S_{11}(3,\iota)$ as well as
$S_{22}(3,\iota)$ has trivial intersection
with any other $S_{ij}(3,\iota)$.
For $S_{12}(3,\iota)$ observe $\iota^+ < 2 \iota^-$
as $\iota^+$ is odd.
Similarly, for $S_{21}(3,\iota)$, we have
$2\iota^+ < \iota^-$.
Thus, in both cases, $\cl(X,x^+)$ is the strictly
smallest of $\cl(X,x^\pm)$.
It follows that  $S_{12}(3,\iota)$ and $S_{21}(3,\iota)$
intersect trivially.
\end{proof}

\section{Geometry of full intrinsic quadric surfaces}
\label{sec:geom-app}

We present direct applications of
Theorems~\ref{thm:fiqs2matrix-rho1},~\ref{thm:fiqs2matrix-rho2}
and~\ref{thm:fiqs2matrix-rho3},
exploring the geometry of full intrinsic quadric surfaces.
In Corollary~\ref{cor:resolvesing} we determine the
weighted resolution graphs for the canonical resolution
of singularities.
Moreover, Corollaries~\ref{cor:anticandeg}, ~\ref{cor:maxlc}
and~\ref{cor:picind} give explicit upper and lower bounds
on the degree, the log canonicity
and the Picard index in terms of the Gorenstein index.
Finally, Corollary~\ref{cor:ke-metrics} characterizes
the existence of K\"ahler-Einstein metrics
in terms of the Gorenstein index.

First, recall that a \emph{del Pezzo surface} is a
normal projective surface admitting an ample anticanonical
divisor $-\mathcal{K}_X$.
Moreover, a del Pezzo surface is \emph{log terminal}
if all the exceptional divisors of its minimal resolution
of singularities have discrepancies strictly bigger
than $-1$.

\goodbreak

\begin{proposition}
\label{prop:fiqs2fano}
Every full intrinsic quadric surface $X$ is a log del Pezzo 
surface.
\end{proposition}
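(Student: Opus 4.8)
The claim is that every full intrinsic quadric surface $X$ is a log del Pezzo surface, which means two things: (i) $-\mathcal{K}_X$ is ample, and (ii) $X$ is log terminal. The plan is to verify both using the explicit classification furnished by Theorems~\ref{thm:fiqs2matrix-rho1},~\ref{thm:fiqs2matrix-rho2} and~\ref{thm:fiqs2matrix-rho3}, together with the general machinery for rational projective $\KK^*$-surfaces of~\cite{ArDeHaLa}*{Sec.~3.4.3} that underlies all three Constructions. Since every such $X$ is, up to isomorphism, one of the surfaces $X(P_\eta)$, it suffices to check the two conditions uniformly in terms of the defining matrix $P$.

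\emph{Log terminality.} First I would handle condition (ii). By Theorem~\ref{thm:fiqs-cr} the Cox ring of $X$ is, in each Picard number, a quadric of full rank, so the total coordinate space $\bar X = V(g) \subseteq \KK^{n}$ has an isolated singularity at the origin and is smooth elsewhere. Hence $X$ is quasismooth, and quasismoothness forces $X$ to be log terminal; this is exactly the argument already invoked for the excluded case $n=0$ in the proof of Theorem~\ref{thm:fiqs-cr}. So log terminality comes essentially for free from the Cox ring description and requires no case analysis.

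\emph{Ampleness of the anticanonical class.} The remaining work is condition (i). Recall from~\cite{ArDeHaLa}*{Prop.~3.3.3.2} (used already in the proofs of the main theorems) that $-\mathcal{K}$ is the sum of the torus-invariant prime divisors $D_i$ coming from the non-elliptic columns $v_i$ of $P$, namely $-\mathcal{K} = D_3 + D_4$ for $\rho=1$, $-\mathcal{K}=D_3+D_4+D_5$ for $\rho=2$, and $-\mathcal{K}=D_3+D_4+D_5+D_6$ for $\rho=3$. The plan is to test ampleness via the Mori/GIT picture for $\KK^*$-surfaces: on a $\QQ$-factorial projective $\KK^*$-surface of Picard number one, every nontrivial effective class is automatically ample, which settles $\rho=1$ immediately since $-\mathcal{K}\neq 0$. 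For $\rho = 2, 3$ I would instead verify that the class $[-\mathcal{K}] = \deg(T_3)+\deg(T_4)+\cdots$ lies in the interior of the ample cone $\Ample(X)$, computed from the columns of the degree matrix $Q$ as in~\cite{ArDeHaLa}*{Prop.~3.3.2.3}; the defining inequalities on the entries $a,b,c,d$ built into each Construction are precisely what guarantee that the fan $\Sigma$ is the one cutting out the ample chamber containing $-\mathcal{K}$.

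\emph{Main obstacle.} I expect the genuine difficulty to be condition (i) in Picard numbers two and three, where ampleness is a strict-interior condition on a cone rather than an automatic consequence of nontriviality. The delicate point is showing, uniformly across the infinite families $S_{ij}(2,\iota)$ and $S_{ij}(3,\iota)$, that $[-\mathcal{K}]$ never falls on a wall of the GIT fan. Rather than grinding through the chamber structure, I would prefer to exploit the intrinsic geometry of $\KK^*$-surfaces with two elliptic fixed points $x^\pm$: for such surfaces there is a concrete positivity criterion for $-\mathcal{K}$ in terms of the slopes $m^\pm$ and the local data at the hyperbolic/parabolic fixed points (see~\cite{HaHaSp}), and the slope-orderedness together with the estimates $a \le -b-c-d$ etc. imposed in each Construction should translate directly into the required positivity. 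The cleanest route, and the one I would ultimately follow, is to reduce ampleness to a single numerical inequality—positivity of the self-intersection $\mathcal{K}_X^2$ together with positivity of the intersection of $-\mathcal{K}$ with each of the finitely many invariant curves—and then read off both from the explicit $P_\eta$; this also dovetails with the degree bound $\mathcal{K}_X^2 \ge 2/\iota_X > 0$ announced in the introduction.
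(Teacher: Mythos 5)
Your split into log terminality plus ampleness matches the paper's, and your log terminality argument (quasismoothness of $\bar X = V(g)$ away from the origin because $g$ has full rank, hence log terminality of $X$) is sound; it is in fact the very argument the paper itself uses inside the proof of Theorem~\ref{thm:fiqs-cr}, while the paper's proof of this proposition simply cites \cite{HaHaSp}*{Cor.~8.12}. The real divergence is in the ampleness half. The paper avoids all case analysis with one observation: writing $\mu = \deg(g) \in \Cl(X)$, the relations $\mu = w_1+w_2 = 2w_3 = \dots$ show that $[-\mathcal{K}_X] = w_1 + \dots + w_{\rho+3} - \mu$ is a positive rational multiple of $\mu$, and $\mu$ lies in the interior of every cone $\cone(w_j;\ j\neq i)$, hence in the interior of $\Mov(X)$, which by \cite{ArDeHaLa}*{Prop.~3.3.2.9, Thm.~4.3.3.5} is the ample cone. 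This is uniform in $\rho$ and uses nothing beyond the shape of the relation.

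Your route for $\rho=2,3$ --- Nakai--Moishezon on the invariant curves, reading positivity off the explicit matrices --- is viable in principle, but as written it has two gaps. First, the actual verification is deferred (``should translate directly into the required positivity''); on your approach that verification across all the infinite families is the entire content of the proof, and it is not carried out. Second, reducing ampleness to positivity against the finitely many invariant curves needs justification (for instance that the Mori cone of a projective $\KK^*$-surface is polyhedral and generated by invariant curves, or the Mori-dream-space description of the ample cone); relatedly, your remark that the inequalities on $a,b,c,d$ ``guarantee the fan cuts out the ample chamber containing $-\mathcal{K}$'' is not accurate --- those inequalities are normalization conditions ensuring uniqueness of $P$, and the reason chamber membership is a non-issue here is that $\QQ$-factorial small modifications of a surface are isomorphisms, so $\Mov(X)$ carries a single chamber. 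Adopting the paper's $\mu$-argument removes both issues at once.
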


\begin{proof}
We may assume $X = X(P)$.
Then log terminality is direct a conseqence
of~\cite{HaHaSp}*{Cor.~8.12}.
According to the possible values of the Picard
number $\rho = \rho(X)$, the degree
$\mu \in \Cl(X)$ of the defining
quadric of $X$ is given as
\[
\mu \ = \ 
\begin{cases}
w_1+w_2 = 2w_3 = 2w_4,           & \rho = 1,
\\
w_1+w_2 = w_3+w_4 = 2w_5,        & \rho = 2,
\\
w_1+w_2 = w_3+w_4 = w_5+w_6 ,    & \rho = 3,
\end{cases}
\]
where $w_i = \deg(T_i) \in \Cl(X)$.
Due to~\cite{ArDeHaLa}*{Prop.~3.3.3.2},
the anticanonical class of $X$ equals
$w_1+\ldots+w_{\rho+3}-\mu$ and thus is
a positive multiple of $\mu$.
From~\cite{ArDeHaLa}*{Prop.~3.3.2.9}
we infer that the cone of movable
divisor classes of $X$ is given by
\[
\Mov(X)
\ = \
\bigcap_{i=1}^{\varrho+3} \tau_i,
\qquad
\tau_i
\ := \ 
\cone(w_j; \ j \ne i)
\ \subseteq \
\Cl_\QQ(X).
\]
Observe that $\mu$ is an interior point 
of each $\tau_i$.
As all involved cones are of full dimension,
we obtain that $\mu$ is an interior point
of $\Mov(X)$.
Thus,~\cite{ArDeHaLa}*{Prop.~3.3.2.9, Thm.~4.3.3.5}
show that $\mu$, and hence the anticanonical class of
$X$, is ample.
\end{proof}

The \emph{(anticanonical) degree} of a del Pezzo surface
is the self intersection number $\mathcal{K}_X^2$ of an
anticanonical divisor of $X$.
For the full intrinsic quadric surfaces,
we obtain the following relations between the degree
and the local Gorenstein indices.

\begin{corollary}
\label{cor:anticandeg}
Consider a full intrinsic quadric surface
$X = X(P_\eta)$ with $P_\eta$ as in
Theorem~\ref{thm:fiqs2matrix-rho1},~\ref{thm:fiqs2matrix-rho2}
or~\ref{thm:fiqs2matrix-rho3}.
Then the degree $\mathcal{K}_X^2$ of $X$ is given as 
\[
\begin{array}{lcllcll}
\rho=1: 
&&
\mathcal{K}_X^2 = \frac{1}{\iota^+} + \frac{1}{\iota^-},
&
\eta \in S_{11}(1,\iota),
&&
\mathcal{K}_X^2 = \frac{1}{\iota^+} + \frac{2}{\iota^-},
&
\eta \in S_{12}(1,\iota),
\\[5pt]
&&
\mathcal{K}_X^2 = \frac{2}{\iota^+} + \frac{1}{\iota^-},
&
\eta \in S_{21}(1,\iota),
&&
\mathcal{K}_X^2 = \frac{2}{\iota^+} + \frac{2}{\iota^-},
&
\eta \in S_{22}(1,\iota),
\\[10pt]
\rho=2: 
&&
\mathcal{K}_X^2 = \frac{9}{2\iota^+} + \frac{9}{2\iota^-},
&
\eta \in S_{11}(2,\iota),
&&
\mathcal{K}_X^2 = \frac{9}{2\iota^+} + \frac{3}{2\iota^-},
&
\eta \in S_{12}(2,\iota),
\\[5pt]
&&
\mathcal{K}_X^2 = \frac{3}{2\iota^+} + \frac{9}{2\iota^-},
&
\eta \in S_{21}(2,\iota),
&&
\mathcal{K}_X^2 = \frac{3}{2\iota^+} + \frac{3}{2\iota^-},
&
\eta \in S_{22}(2,\iota),
\\[10pt]
\rho=3: 
&&
\mathcal{K}_X^2 = \frac{4}{\iota^+} + \frac{4}{\iota^-},
&
\eta \in S_{11}(3,\iota),
&&
\mathcal{K}_X^2 = \frac{4}{\iota^+} + \frac{2}{\iota^-},
&
\eta \in S_{12}(3,\iota),
\\[5pt]
&&
\mathcal{K}_X^2 = \frac{2}{\iota^+} + \frac{4}{\iota^-},
&
\eta \in S_{21}(3,\iota),
&&
\mathcal{K}_X^2 = \frac{2}{\iota^+} + \frac{2}{\iota^-},
&
\eta \in S_{22}(3,\iota).
\\
\end{array}
\]
Here, $\rho$ is the Picard number, $\iota$ the Gorenstein index
of $X$ and $\iota^\pm$ the local Gorenstein index of  $x^\pm \in X$.
Moreover, we obtain the following upper and lower
bounds:
\[
\begin{array}{ll}
\rho= 1: & \frac{2}{\iota} \le \mathcal{K}_X^2 \le  1 + \frac{4}{\iota},
\\[5pt]
\rho= 2: & \frac{3}{\iota} \le \mathcal{K}_X^2 \le  \frac{9}{2} + \frac{9}{2\iota},
\\[5pt] 
\rho= 3: & \frac{4}{\iota} \le \mathcal{K}_X^2 \le  4 + \frac{4}{\iota}.
\end{array}
\]
\end{corollary}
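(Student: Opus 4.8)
The plan is to reduce the entire statement to one self-intersection number and then obtain the four case-formulas and the bounds by substitution and elementary estimation. First I would pin down the anticanonical class up to a rational factor. By \cite{ArDeHaLa}*{Prop.~3.3.3.2} we have $-\mathcal{K}_X = w_1 + \cdots + w_{\rho+3} - \mu$ in $\Cl_\QQ(X)$, where $w_i = \deg(T_i)$ and $\mu = \deg(g)$ is the degree of the defining quadric. Substituting $\mu = w_1+w_2$ together with $2w_3 = 2w_4 = \mu$ for $\rho=1$, with $w_3+w_4 = 2w_5 = \mu$ for $\rho=2$, and with $w_3+w_4 = w_5+w_6 = \mu$ for $\rho=3$, a direct computation gives $-\mathcal{K}_X = \kappa_\rho\,\mu$ with $\kappa_1 = 1$, $\kappa_2 = \tfrac 32$ and $\kappa_3 = 2$. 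Hence $\mathcal{K}_X^2 = \kappa_\rho^2\,\mu^2$, and it remains to compute the single number $\mu^2$.

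The second step is the technical heart. Here I would feed the defining matrix into the intersection theory of the $\KK^*$-surfaces of Constructions~\ref{constr:matrix2fiqs-rho1},~\ref{constr:matrix2fiqs-rho2} and~\ref{constr:matrix2fiqs-rho3}, as developed in \cite{HaHaSp}: this machinery computes the self-intersection of an invariant $\QQ$-Cartier divisor from its local linear forms at the fixed points. The self-intersection $\mu^2$ localises at the two elliptic fixed points $x^+$, $x^-$, whose local description is read off exactly as the $u^\pm$ were in the proofs of Theorems~\ref{thm:fiqs2matrix-rho1}--\ref{thm:fiqs2matrix-rho3}, while the remaining points $x_0$, $x_1$, $x_2$ of local Gorenstein index one enter only in a controlled way. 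The outcome I expect is the uniform identity $\mu^2 = \tfrac{1}{m^+} - \tfrac{1}{m^-}$, where $m^\pm$ are the slopes of $P$, namely $m^+ = a+1$, $m^- = b+1$ for $\rho=1$, $m^+ = a+\tfrac 12$, $m^- = b+c+\tfrac 12$ for $\rho=2$, and $m^+ = a$, $m^- = b+c+d$ for $\rho=3$. Since $m^- < 0 < m^+$ this is positive, in accordance with the ampleness of $\mu$ from Proposition~\ref{prop:fiqs2fano}, and we obtain $\mathcal{K}_X^2 = \kappa_\rho^2\bigl(\tfrac{1}{m^+} - \tfrac{1}{m^-}\bigr)$.

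Granting this, the tabulated formulas follow by substitution. For each series $S_{ij}$ the parity and divisibility conditions fix $m^\pm$ as explicit multiples of $\iota^\pm$: reading the third row of $P_\eta$ one finds, for $\rho=1$, that $m^+$ equals $\iota^+$ or $\tfrac{\iota^+}{2}$ according to whether $a$ is even or odd, and symmetrically for $m^-$, turning $\kappa_1^2\bigl(\tfrac 1{m^+}-\tfrac 1{m^-}\bigr)$ into $\tfrac{1}{\iota^+}+\tfrac{1}{\iota^-}$, $\tfrac{1}{\iota^+}+\tfrac{2}{\iota^-}$, and so on. The cases $\rho=2,3$ are the same in spirit, using the relations $2m^+ \in \{\iota^+, 3\iota^+\}$ and $-2m^- \in \{\iota^-, 3\iota^-\}$ for $\rho=2$, respectively $m^+ \in \{\iota^+, 2\iota^+\}$ and $-m^- \in \{\iota^-, 2\iota^-\}$ for $\rho=3$, which are precisely the identities recorded in the case distinctions of the respective theorem proofs.

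Finally I would read off the bounds by elementary optimisation under the constraint $\iota = \lcm(\iota^+,\iota^-)$. The lower bounds are immediate from $|m^\pm| \le M_\rho\,\iota$ with $M_1 = 1$, $M_2 = \tfrac 32$, $M_3 = 2$ the largest multipliers occurring above: this gives $\mathcal{K}_X^2 \ge \kappa_\rho^2\cdot\tfrac{2}{M_\rho\,\iota}$, which evaluates to $\tfrac 2\iota$, $\tfrac 3\iota$ and $\tfrac 4\iota$ respectively. For the upper bounds I would use that minimising a slope forces the opposite local index to equal $\iota$: taking $\iota^+ = 1$, the only way to maximise $\tfrac 1{m^+}$, forces $\iota^- = \iota$, whence $|m^-|$ is the corresponding multiple of $\iota$; this produces the maxima $\tfrac 92 + \tfrac{9}{2\iota}$ and $4 + \tfrac 4\iota$, attained in $S_{11}$ for $\rho=2,3$, and a value dominated by $1 + \tfrac 4\iota$ for $\rho=1$. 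I expect the second step — extracting the clean localised value $\mu^2 = \tfrac 1{m^+} - \tfrac 1{m^-}$ from the $\KK^*$-surface intersection theory and confirming that the non-elliptic fixed points contribute as claimed — to be the sole genuine obstacle; everything else is bookkeeping across the four series together with a short optimisation.
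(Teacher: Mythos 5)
Your proposal is correct and follows essentially the same route as the paper: it computes $\mathcal{K}_X^2$ from the defining matrix via the intersection theory of $\KK^*$-surfaces in \cite{HaHaSp}*{Summary~7.7} (your $\kappa_\rho^2\bigl(\tfrac{1}{m^+}-\tfrac{1}{m^-}\bigr)$ is exactly the paper's three case formulas $\tfrac{1}{a+1}-\tfrac{1}{b+1}$, $\tfrac{9}{4a+2}-\tfrac{9}{2+4b+4c}$, $\tfrac{4}{a}-\tfrac{4}{b+c+d}$), then substitutes the entries from Theorems~\ref{thm:fiqs2matrix-rho1}--\ref{thm:fiqs2matrix-rho3} and verifies the estimates. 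The only cosmetic difference is that you factor the computation through $-\mathcal{K}_X=\kappa_\rho\mu$ and the slopes $m^\pm$, which the paper leaves implicit.
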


\begin{proof}
First assume $X = X(P)$ with $P$ from
Construction~\ref{constr:matrix2fiqs-rho1},~\ref{constr:matrix2fiqs-rho2}
or~\ref{constr:matrix2fiqs-rho3}.
Taking any representative of the anticanonical class
as listed in the proof of the preceding proposition,
we use the intersection numbers provided by~\cite{HaHaSp}*{Summary~7.7},
and compute according to the cases $\rho=1,2,3$:
\[
\mathcal{K}_X^2 = \frac{1}{a+1} - \frac{1}{b+1},
\qquad
\mathcal{K}_X^2 = \frac{9}{4a+2} - \frac{9}{2+4b+4c},
\qquad
\mathcal{K}_X^2 = \frac{4}{a} - \frac{4}{b+c+d}.
\]
Inserting the values for $a,b,c,d$ from 
Theorems~\ref{thm:fiqs2matrix-rho1},~\ref{thm:fiqs2matrix-rho2}
and~\ref{thm:fiqs2matrix-rho3} accordingly,
we obtain the desired presentations of the anticanonical
self intersection number.
The estimates are then directly verified.
\end{proof}

\def\ellgraph#1{
\begin{tikzpicture}[scale=0.6]
\sffamily
\draw[thick] (0,0) circle (2pt);
\node[] at (0,.5) {#1};
\draw[thick] (-1.5,0) circle (2pt);
\node[] at (-1.5,.5) {$\scriptscriptstyle -2$};
\draw[thick] (1.5,0) circle (2pt);
\node[] at (1.5,.5)  {$\scriptscriptstyle -2$};
\draw[] (-1.4,0) edge (-.1,0);
\draw[] (.1,0) edge (1.4,0);
\end{tikzpicture}
}

\def\eellgraph#1{
\begin{tikzpicture}[scale=0.6]
\sffamily
\draw[thick] (0,0) circle (2pt);
\node[] at (0,.5) {#1};
\draw[thick] (-1.5,0) circle (2pt);
\node[] at (-1.5,.5) {$\scriptscriptstyle -2$};
\draw[] (-1.4,0) edge (-.1,0);
\end{tikzpicture}
}

\def\eeellgraph#1{
\begin{tikzpicture}[scale=0.6]
\sffamily
\draw[thick] (0,0) circle (2pt);
\node[] at (0,.5) {#1};
\end{tikzpicture}
}

\def\hypgraph{
\begin{tikzpicture}[scale=0.6]
\sffamily
\draw[thick] (-1.5,0) circle (2pt);
\node[] at (-1.5,.5) {$\scriptscriptstyle -2$};
\draw[thick] (-.5,0) circle (2pt);
\node[] at (-.5,.5) {$\scriptscriptstyle -2$};
\draw[thick] (1.5,0) circle (2pt);
\node[] at (1.5,.5)  {$\scriptscriptstyle -2$};
\draw[] (-1.4,0) edge (-.6,0);
\draw (-.4,0) edge (-.1,0);
\draw[dotted] (0,0) edge (1,0);
\draw (1.1,0) edge (1.4,0);
\end{tikzpicture}
}

\def\sij#1{
\begin{tikzpicture}[scale=0.6]
\sffamily
\node[] at (0,0) {#1};
\end{tikzpicture}
}

We turn to the singularities of full intrinsic
quadrics and consider the canonical resolution
of singularities in the sense of ~\cite{OrWa};
see also~\cite{ArDeHaLa}*{Sec.~5.4.3}.

\begin{corollary}
\label{cor:resolvesing}
Consider a full intrinsic quadric surface
$X = X(P_\eta)$ with $P_\eta$ as in
Theorem~\ref{thm:fiqs2matrix-rho1},~\ref{thm:fiqs2matrix-rho2}
or~\ref{thm:fiqs2matrix-rho3}
and its canonical resolution of singularities.
Then the possible singularities $x^+,x^-,x_0,x_1,x_2 \in X$
have the following resolution graphs:

{\small
\[
\begin{array}{lccc}    
\rho=1:
&
\scriptstyle x^+, \ 3
&
\scriptstyle x^-, \ 3
&
\scriptstyle x_0, \ \iota^++\iota^--1
\\[10pt]
\sij{$\scriptstyle S_{11}(1,\iota)$}
&
\ellgraph{$\scriptscriptstyle -1-\iota^+$}
&
\ellgraph{$\scriptscriptstyle -1-\iota^-$}
&
\hypgraph
\\[5pt]
\sij{$\scriptstyle S_{12}(1,\iota)$}
&
\ellgraph{$\scriptscriptstyle -1-\iota^+$}
&
\ellgraph{$\scriptscriptstyle -1-\frac{\iota^-}{2}$}
&
\hypgraph
\\[5pt]
\sij{$\scriptstyle S_{21}(1,\iota)$}
&
\ellgraph{$\scriptscriptstyle -1-\frac{\iota^+}{2}$}
&
\ellgraph{$\scriptscriptstyle -1-\iota^-$}
&
\hypgraph
\\[5pt]
\sij{$\scriptstyle S_{22}(1,\iota)$}
&
\ellgraph{$\scriptscriptstyle -1-\frac{\iota^+}{2}$}
&
\ellgraph{$\scriptscriptstyle -1-\frac{\iota^-}{2}$}
&
\hypgraph
\end{array}
\]

\medskip

\[
\renewcommand{\arraycolsep}{5pt}
\begin{array}{lcccc}    
\rho=2:
&
\scriptstyle x^+, \ 2
&
\scriptstyle x^-, \ 2
&
\scriptstyle x_0, \ \frac{\iota^++\iota^-}{2}+c-1+\varepsilon
&
\scriptstyle x_1, \ -1-c
\\[10pt]
\sij{$\scriptstyle S_{11}(2,\iota)$}
&
\eellgraph{$\scriptscriptstyle -\frac{1}{2}-\frac{\iota^+}{2}$}
&
\eellgraph{$\scriptscriptstyle -\frac{1}{2}-\frac{\iota^-}{2}$}
&
\hypgraph {\scriptscriptstyle \varepsilon=0}
&
\hypgraph
\\[5pt]
\sij{$\scriptstyle S_{12}(2,\iota)$}
&
\eellgraph{$\scriptscriptstyle -\frac{1}{2}-\frac{\iota^+}{2}$}
&
\eellgraph{$\scriptscriptstyle -\frac{1}{2}-\frac{3\iota^-}{2}$}
&
\hypgraph {\scriptscriptstyle \varepsilon=\iota^-}
&
\hypgraph
\\[5pt]
\sij{$\scriptstyle S_{21}(2,\iota)$}
&
\eellgraph{$\scriptscriptstyle -\frac{1}{2}-\frac{3\iota^+}{2}$}
&
\eellgraph{$\scriptscriptstyle -\frac{1}{2}-\frac{\iota^-}{2}$}
&
\hypgraph {\scriptscriptstyle \varepsilon=\iota^+}
&
\hypgraph
\\[5pt]
\sij{$\scriptstyle S_{22}(2,\iota)$}
&
\eellgraph{$\scriptscriptstyle -\frac{1}{2}-\frac{3\iota^+}{2}$}
&
\eellgraph{$\scriptscriptstyle -\frac{1}{2}-\frac{3\iota^-}{2}$}
&
\hypgraph {\scriptscriptstyle \varepsilon=\iota^++\iota^-}
&
\hypgraph
\end{array}
\]

\medskip

\[
\renewcommand{\arraycolsep}{4pt}
\begin{array}{lccccc}    
\rho=3:
&
\scriptstyle x^+, \ 1
&
\scriptstyle x^-, \ 1
&
\scriptstyle x_0, \ \iota^++\iota^-+c+d-1+\varepsilon
&
\scriptstyle x_1, \ -1-c
&
\scriptstyle x_2, \ -1-d
\\[10pt]
\sij{$\scriptstyle S_{11}(3,\iota)$}
&
\eeellgraph{$\scriptscriptstyle -\iota^+$}
&
\eeellgraph{$\scriptscriptstyle -\iota^-$}
&
\hypgraph {\scriptscriptstyle \varepsilon=0}
&
\hypgraph
&
\hypgraph
\\[5pt]
\sij{$\scriptstyle S_{12}(3,\iota)$}
&
\eeellgraph{$\scriptscriptstyle -\iota^+$}
&
\eeellgraph{$\scriptscriptstyle -\iota^-$}
&
\hypgraph {\scriptscriptstyle \varepsilon=\iota^-}
&
\hypgraph
&
\hypgraph
\\[5pt]
\sij{$\scriptstyle S_{21}(3,\iota)$}
&
\eeellgraph{$\scriptscriptstyle -2\iota^+$}
&
\eeellgraph{$\scriptscriptstyle -\iota^-$}
&
\hypgraph {\scriptscriptstyle \varepsilon=\iota^+}
&
\hypgraph
&
\hypgraph
\\[5pt]
\sij{$\scriptstyle S_{22}(3,\iota)$}
&
\eeellgraph{$\scriptscriptstyle -2\iota^+$}
&
\eeellgraph{$\scriptscriptstyle -2\iota^-$}
&
\hypgraph {\scriptscriptstyle \varepsilon=\iota^++\iota^-}
&
\hypgraph
&
\hypgraph
\end{array}
\]
}

\noindent
Here, the weights of the vertices are the self intersection numbers of the
corresponding exceptional curves.
The canonical resolution is minimal unless $x^+ \in X$ is smooth,
where the latter happens if and only if
\[
\iota^+=1,
\
\eta \in S_{11}(2,\iota) \cup S_{12}(2,\iota) \cup S_{11}(3,\iota) \cup S_{12}(3,\iota).
\]
\end{corollary}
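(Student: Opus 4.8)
The plan is to reduce to the explicit matrices $P_\eta$ furnished by Theorems~\ref{thm:fiqs2matrix-rho1},~\ref{thm:fiqs2matrix-rho2} and~\ref{thm:fiqs2matrix-rho3}, and then to run the canonical resolution of~\cite{OrWa} in its combinatorial, Cox-ring form~\cite{ArDeHaLa}*{Sec.~5.4.3}. Since $X=X(P_\eta)$ is a $\KK^*$-surface, its singular locus consists precisely of the fixed points $x^+,x^-,x_0,x_1,x_2$, and the canonical resolution is a stellar refinement of the defining fan $\Sigma$: one inserts the rays prescribed by Hirzebruch-Jung continued fractions at the elliptic fixed points $x^\pm$ and the rays subdividing the two-orbit configurations at the hyperbolic fixed points $x_0,x_1,x_2$. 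Each inserted ray yields one exceptional curve, whose self-intersection number I would read off from the intersection calculus~\cite{HaHaSp}*{Summary~7.7} applied to the refined surface.

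First I would dispose of the hyperbolic fixed points. Each of $x_0,x_1,x_2$ lies in the closure of exactly two $\KK^*$-orbits, hence is a cyclic quotient singularity of type $A_n$, resolved by a chain of $(-2)$-curves; its length is $\cl(X,x_i)-1$. Reading the local class group orders off Propositions~\ref{prop:fiqs-rho1-props-1},~\ref{prop:fiqs-rho2-props-1} and~\ref{prop:fiqs-rho3-props-1}, namely $\cl(X,x_0)=a-b$, $\cl(X,x_1)=-c$ and $\cl(X,x_2)=-d$, and substituting the third-row entries of $P_\eta$ from the respective series, gives the stated chain lengths $-1-c$, $-1-d$ and, after the elementary rewriting encoded by $\varepsilon$, the length $\cl(X,x_0)-1$ at $x_0$. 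This accounts for every \emph{hypgraph} in the three tables.

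The substantial work lies at the elliptic fixed points $x^\pm$. Here the local singularity is again a cyclic quotient $\tfrac{1}{\cl(X,x^\pm)}(1,q^\pm)$, whose order is recorded in the same propositions and whose second invariant $q^\pm$ is fixed by the local toric data at $x^\pm$ (equivalently by the slopes $m^\pm$ and the torsion of $\Cl(X)$, as reflected in the primitive forms $u^\pm$ of the classification proofs); its resolution graph is the Hirzebruch-Jung chain of the pair. Illustrating with the series $S_{11}$, in Picard number one the expansion is the symmetric chain $[2,\,1+\iota^+,\,2]$ of value $4\iota^+/(2\iota^++1)$, matching $\cl(X,x^+)=4a+4$ and explaining the graph $(-2,\,-1-\iota^+,\,-2)$; in Picard number two it is the two-term chain of value $\iota^+/2$, giving the weights $-\tfrac12-\tfrac{\iota^+}{2}$ and $-2$; in Picard number three it is the single curve $[\iota^+]$ of weight $-\iota^+$. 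The remaining series and the point $x^-$ are handled identically, the only change being a substitution $\iota^\pm\mapsto\iota^\pm/2$ or $\iota^\pm\mapsto 3\iota^\pm,2\iota^\pm$ dictated by the case distinctions of the classification. I would run this bookkeeping through all four series and each Picard number, invoking the parities $2\nmid\iota^\pm$ present in Picard numbers two and three to ensure the half-integer expressions are genuine integers, and checking in each line that the continued fraction reproduces the tabulated self-intersections.

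Finally, for the minimality claim I would search the completed graphs for $(-1)$-curves. Every hypgraph consists of $(-2)$-curves, and the $x^-$-weights are bounded above by $-2$ throughout the admissible ranges; for instance in $S_{11}(2,\iota)$ the constraint on $c$ forces $\iota^-\ge 3$ whenever $\iota^+=1$, and similarly in the other series with $\iota^+\le\iota^-$, so the distinguished curve over $x^+$ is the only candidate. Setting its weight to $-1$ yields exactly $\iota^+=1$ with $\eta$ in $S_{11}$ or $S_{12}$ of Picard number two or three; in Picard number three this curve is already a $(-1)$-curve, while in Picard number two its contraction turns the adjacent $(-2)$-curve into a further $(-1)$-curve, so in both cases the whole fibre over $x^+$ collapses and $x^+$ is smooth. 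In all other cases no $(-1)$-curve occurs and the canonical resolution is minimal. The main obstacle is precisely this uniform continued-fraction analysis at $x^\pm$: one must correctly extract the second invariant $q^\pm$ of each cyclic quotient from the local data and match its Hirzebruch-Jung expansion to the prescribed chain across all twelve configurations.
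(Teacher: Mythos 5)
Your proposal is correct and follows essentially the same route as the paper: determine the canonical resolution of~\cite{OrWa} combinatorially from the defining matrix $P_\eta$ (chains of $(-2)$-curves at the hyperbolic fixed points of length one less than the local class group order, Hirzebruch--Jung data at the elliptic fixed points), compute the self-intersection numbers of the exceptional curves via \cite{ArDeHaLa}*{Sec.~5.4.2--5.4.3} and \cite{HaHaSp}*{Summary~7.7}, and substitute the values of $a,b,c,d$ from Theorems~\ref{thm:fiqs2matrix-rho1},~\ref{thm:fiqs2matrix-rho2} and~\ref{thm:fiqs2matrix-rho3}. The paper's own proof is only a reference to this machinery and is silent on the minimality claim, so your explicit $(-1)$-curve search and the verification that the admissible parameter ranges rule out $\iota^-=1$ is a faithful (indeed more complete) execution of the intended argument; the only point to tighten is that ``two-orbit fixed point'' alone does not force type $A_n$ --- one must also observe that the relevant two-dimensional cones here are spanned by columns sharing the same first two entries, which is what makes them $A_{n-1}$ with $n$ the local class group order.
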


\begin{proof}
For $X = X(P)$ with $P$ as in 
Construction~\ref{constr:matrix2fiqs-rho1},~\ref{constr:matrix2fiqs-rho2}
or~\ref{constr:matrix2fiqs-rho3},
we use~\cite{ArDeHaLa}*{Sec.~5.4.3} to determine the canonical
resolution of singularities of $X$; see also~\cite{HaHaSp}*{Summary~8.2}.
Then we compute the self intersection numbers of the exceptional
divisors according to~\cite{ArDeHaLa}*{Sec.~5.4.2};
see also~\cite{HaHaSp}*{Summary~7.7}
and insert the values of $a,b,c,d$ from
Theorems~\ref{thm:fiqs2matrix-rho1},~\ref{thm:fiqs2matrix-rho2}
and~\ref{thm:fiqs2matrix-rho3}.
\end{proof}

\def\accrhoone{
\begin{tikzpicture}[scale=0.4]
\sffamily
\coordinate(ooo) at (0,0);
\coordinate(cplus) at (0,2);
\coordinate(c01) at (-1.4,1);
\coordinate(c02) at (-1.4,-1);
\coordinate(c11) at (1.4,-.7);
\coordinate(c21) at (1.8,1);
\coordinate(cminus) at (0,-2);
\path[fill, color=gray!30] (ooo) -- (cplus) -- (c21) -- (cminus) -- (ooo);
\path[fill, color=gray!10] (ooo) -- (cplus) -- (c01) -- (c02) -- (cminus) -- (ooo);
\draw[thick, color=black] (ooo) to (c01);
\draw[thick, color=black] (ooo) to (c02);
\draw[thick, color=black] (ooo) to (c21);
\draw[thick, color=black] (ooo) to (cplus);
\draw[thick, color=black] (ooo) to (cminus);
\draw[thick, color=black] (cplus) to (c01);
\draw[thick, color=black] (cplus) to (c21);
\draw[thick, color=black] (c21) to (cminus);
\draw[thick, color=black] (c01) to (c02);
\path[fill, opacity=.9, color=gray!20] (ooo) -- (cplus) -- (c11) -- (cminus) -- (ooo);
\draw[thick, color=black] (ooo) to (c11);
\draw[thick, color=black] (cplus) to (c11);
\draw[thick, color=black] (c02) to (cminus);
\draw[thick, color=black] (c11) to (cminus);
\end{tikzpicture}
}

\def\accrhotwo{
\begin{tikzpicture}[scale=0.4]
\sffamily
\coordinate(ooo) at (0,0);
\coordinate(cplus) at (0,2);
\coordinate(c01) at (-1.4,1);
\coordinate(c02) at (-1.4,-1);
\coordinate(c11) at (.9,1);
\coordinate(c12) at (.9,-1);
\coordinate(c21) at (1.6,.8);
\coordinate(cminus) at (0,-2);
\path[fill, color=gray!30] (ooo) -- (cplus) -- (c21) -- (cminus) -- (ooo);
\path[fill, color=gray!10] (ooo) -- (cplus) -- (c01) -- (c02) -- (cminus) -- (ooo);
\draw[thick, color=black] (ooo) to (c01);
\draw[thick, color=black] (ooo) to (c02);
\draw[thick, color=black] (ooo) to (c21);
\draw[thick, color=black] (ooo) to (cplus);
\draw[thick, color=black] (ooo) to (cminus);
\draw[thick, color=black] (cplus) to (c01);
\draw[thick, color=black] (cplus) to (c21);
\draw[thick, color=black] (c21) to (cminus);
\draw[thick, color=black] (c01) to (c02);
\path[fill, opacity=.9, color=gray!20] (ooo) -- (cplus) -- (c11) -- (c12) -- (cminus) -- (ooo);
\draw[thick, color=black] (ooo) to (c11);
\draw[thick, color=black] (ooo) to (c12);
\draw[thick, color=black] (cplus) to (c11);
\draw[thick, color=black] (c11) to (c12);
\draw[thick, color=black] (c02) to (cminus);
\draw[thick, color=black] (cminus) to (c12);
\end{tikzpicture}
}

\def\accrhothree{
\begin{tikzpicture}[scale=0.4]
\sffamily
\coordinate(ooo) at (0,0);
\coordinate(cplus) at (0,2);
\coordinate(c01) at (-1.4,1);
\coordinate(c02) at (-1.4,-1);
\coordinate(c11) at (1,.7);
\coordinate(c12) at (1,-.9);
\coordinate(c21) at (.8,1.8);
\coordinate(c22) at (.8,-.1);
\coordinate(cminus) at (0,-2);
\path[fill, color=gray!30] (ooo) -- (cplus) -- (c21) -- (c22) -- (cminus) -- (ooo);
\path[fill, color=gray!10] (ooo) -- (cplus) -- (c01) -- (c02) -- (cminus) -- (ooo);
\draw[thick, color=black] (ooo) to (c01);
\draw[thick, color=black] (ooo) to (c02);
\draw[thick, color=black] (ooo) to (c21);
\draw[thick, color=black] (ooo) to (c22);
\draw[thick, color=black] (ooo) to (cplus);
\draw[thick, color=black] (ooo) to (cminus);
\draw[thick, color=black] (cplus) to (c01);
\draw[thick, color=black] (cplus) to (c21);
\draw[thick, color=black] (c21) to (c22);
\draw[thick, color=black] (c22) to (cminus);
\draw[thick, color=black] (c01) to (c02);
\path[fill, opacity=.9, color=gray!20] (ooo) -- (cplus) -- (c11) -- (c12) -- (cminus) -- (ooo);
\draw[thick, color=black] (ooo) to (c11);
\draw[thick, color=black] (ooo) to (c12);
\draw[thick, color=black] (cplus) to (c11);
\draw[thick, color=black] (c11) to (c12);
\draw[thick, color=black] (c02) to (cminus);
\draw[thick, color=black] (cminus) to (c12);
\end{tikzpicture}
}

By the \emph{log canonicity} of a log terminal
projective surface $X$, we mean the number 
$\varepsilon_X := a_E+1$, where $a_E$ is the
minimal possible discrepancy appearing among the
exceptional divisors $E \subseteq \tilde X$ of its
minimal resolution of singularities.
Note that $1/\iota_X$ is bounded by the
log canonicity.
Alexeev's results~\cite{Al} show in particular
that bounding the log canonicity
gives finiteness for log del Pezzo surfaces.

\begin{corollary}
\label{cor:maxlc}
Consider a full intrinsic quadric surface
$X = X(P_\eta)$ with $P_\eta$ as in
Theorem~\ref{thm:fiqs2matrix-rho1},~\ref{thm:fiqs2matrix-rho2}
or~\ref{thm:fiqs2matrix-rho3}.
Then the log canonicity $\varepsilon_X$
of $X$ is given by
\[
\renewcommand{\arraycolsep}{3pt}
\begin{array}{lclcl}
\rho = 1:
&&
\varepsilon_X = \frac{1}{\iota^-}, 
\
\eta \in S_{11}(1,\iota) \cup S_{21}(1,\iota),
&&
\varepsilon_X = \frac{2}{\iota^-}, 
\
\eta \in S_{12}(1,\iota) \cup S_{22}(1,\iota),
\\[5pt]
\rho = 2:
&&
\varepsilon_X = \frac{3}{\iota^-}, 
\
\eta \in S_{11}(2,\iota) \cup S_{21}(2,\iota),
&&
\varepsilon_X = \frac{1}{\iota^-}, 
\
\eta \in S_{12}(2,\iota) \cup S_{22}(2,\iota),
\\[5pt]
\rho = 3:
&&
\varepsilon_X = \frac{2}{\iota^-}, 
\
\eta \in S_{11}(3,\iota) \cup S_{21}(3,\iota),
&&
\varepsilon_X = \frac{1}{\iota^-}, 
\
\eta \in S_{12}(3,\iota) \cup S_{22}(3,\iota).
\end{array}
\]
In particular, we obtain the following upper and lower
bounds for the log canonicity $\varepsilon_X$ of $X$:
\[
\rho = 1: \ \frac{1}{\iota} \le \varepsilon_X \le \frac{2}{\sqrt{\iota}},
\qquad  
\rho = 2: \ \frac{1}{\iota} \le \varepsilon_X \le \frac{3}{\sqrt{\iota}},
\qquad  
\rho = 3: \ \frac{1}{\iota} \le \varepsilon_X \le \frac{2}{\sqrt{\iota}}.
\]
\end{corollary}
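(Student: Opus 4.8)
The plan is to read the minimal discrepancy off the resolution graphs of Corollary~\ref{cor:resolvesing}, which give the minimal resolution except where $x^+$ is smooth (there the superfluous $x^+$-curve is contracted and $x^+$ contributes nothing). On a minimal resolution of quotient surface singularities every discrepancy is $\le 0$, and the divisors over $x_0,x_1,x_2$ form $A_n$-chains of $(-2)$-curves, hence have discrepancy exactly $0$. Thus the minimal discrepancy, and with it $\varepsilon_X=1+a_{\min}$, is governed entirely by the exceptional curves over $x^+$ and $x^-$.

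Next I would compute these discrepancies from the tridiagonal system $\sum_j(E_i\cdot E_j)\,a_j=E_i^2+2$, coming from $K_{\tilde X}=\pi^*\mathcal K_X+\sum_j a_jE_j$ together with adjunction $K_{\tilde X}\cdot E_i=-E_i^2-2$ on the rational curves $E_i$. With the self-intersections from Corollary~\ref{cor:resolvesing}, a single curve $(-m)$ gives $a=2/m-1$, the two-chain $(-2,-m)$ gives central value $a=2(2-m)/(2m-1)$, and the three-chain $(-2,-m,-2)$ gives central value $a=(2-m)/(m-1)$. In the latter two cases the side $(-2)$-curves receive exactly half the central discrepancy and are thus less negative, so the central curve always attains the minimum. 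Writing $a_{x^\pm}$ for this central discrepancy and inserting the tabulated central self-intersections collapses the result to
\[
a_{x^\pm}+1=\frac{c^\pm}{\iota^\pm},\qquad c^\pm\in\{1,2,3\},
\]
where the two possible values of $c^\pm$ ($1,2$ for $\rho=1,3$ and $1,3$ for $\rho=2$) correspond to the $S_{1*}/S_{2*}$ resp.\ $S_{*1}/S_{*2}$ branch.

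The crux is then a clean bookkeeping observation: after clearing denominators, the defining inequality of each $S_{ij}(\rho,\iota)$ is exactly $c^-/\iota^-\le c^+/\iota^+$. For instance $S_{21}(3,\iota)$ carries $c^+=1,\ c^-=2$ and the condition $2\iota^+\le\iota^-$, which is literally $2/\iota^-\le 1/\iota^+$; the remaining eleven cases match in the same way. Hence the minimum over $x^\pm$ is always realised at $x^-$, yielding $\varepsilon_X=c^-/\iota^-$ as tabulated. The only boundary phenomenon is $\iota^+=1$: there Corollary~\ref{cor:resolvesing} tells us $x^+$ is smooth and drops out, so $x^-$ trivially provides the minimum and the formula persists. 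I expect this identification of the set-defining inequalities with the minimum-selection condition $c^-/\iota^-\le c^+/\iota^+$ to be the decisive point; everything else is a finite linear-algebra computation.

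Finally, the estimates follow from elementary bounds on $\iota^-$. From $\iota^-\mid\iota$ we get $\iota^-\le\iota$, hence $\varepsilon_X=c^-/\iota^-\ge 1/\iota^-\ge 1/\iota$, with equality already in the branches where $c^-=1$ and $\iota^-=\iota$; this gives the lower bound. For the upper bound, the branch realising the largest $c^-$, namely $S_{22}$ for $\rho=1$ and $S_{11}$ for $\rho=2,3$ (with $c^-=2,3,2$), satisfies $\iota^+\le\iota^-$, so $\iota=\lcm(\iota^+,\iota^-)\le\iota^+\iota^-\le(\iota^-)^2$ forces $\iota^-\ge\sqrt\iota$ and thus $\varepsilon_X\le c^-/\sqrt\iota$. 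The remaining branches obey the stronger conditions $2\iota^+\le\iota^-$ or $\iota^+\le 2\iota^-$, which only lower $\varepsilon_X$ further, so the maximal $c^-$ produces the asserted bounds $2/\sqrt\iota,\ 3/\sqrt\iota,\ 2/\sqrt\iota$.
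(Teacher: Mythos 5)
Your argument is correct and reaches the stated formulas in all twelve cases, but it travels a different road than the paper. The paper's proof bypasses the resolution graphs entirely: it invokes the anticanonical complex $\mathcal{A}_X$ of \cite{BeHaHuNi} and the explicit cell description of \cite{HaHaSp}*{Thm.~9.17} to read off the discrepancies of the two divisors $E^\pm$ over the rays through $\pm e_3$ as $\tfrac{1}{a+1}-1$, $\tfrac{1}{-b-1}-1$ (resp.\ $\tfrac{3}{2a+1}-1$, $-\tfrac{3}{2b+2c+1}-1$ and $\tfrac{2}{a}-1$, $-\tfrac{2}{b+c+d}-1$), asserts without further comment that these are the minimal discrepancies, and substitutes the parameters. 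You instead solve the adjunction system on the chains of Corollary~\ref{cor:resolvesing}; this buys a genuinely self-contained justification of two points the paper leaves implicit, namely that the side $(-2)$-curves over $x^\pm$ carry exactly half the central discrepancy and that the $A_n$-chains over $x_0,x_1,x_2$ contribute discrepancy $0$, and your observation that each defining inequality of $S_{ij}$ is literally $c^-/\iota^-\le c^+/\iota^+$ makes the selection of $x^-$ transparent where the paper just inserts values. You also supply the $\lcm(\iota^+,\iota^-)\le\iota^+\iota^-\le(\iota^-)^2$ argument for the $1/\sqrt{\iota}$-type upper bounds, which the paper states without proof. Two cautions: your general equation should read $\sum_j(E_i\cdot E_j)a_j=-E_i^2-2$ (the per-case formulas $2/m-1$, $2(2-m)/(2m-1)$, $(2-m)/(m-1)$ you derive from it are nevertheless correct); and the table in Corollary~\ref{cor:resolvesing} lists the weight $-\iota^-$ at $x^-$ for $S_{12}(3,\iota)$, whereas consistency with $\cl(X,x^-)=-b-c-d=2\iota^-$ and with the claimed value $\varepsilon_X=1/\iota^-$ requires $-2\iota^-$ there --- your stated $c^-=1$ for the $S_{*2}$ branches shows you used the corrected weight, but a literal reading of the printed table would derail that one case, so the discrepancy should be flagged rather than silently absorbed.
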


\begin{proof}
For $X = X(P)$ with $P$ as in 
Construction~\ref{constr:matrix2fiqs-rho1},~\ref{constr:matrix2fiqs-rho2}
or~\ref{constr:matrix2fiqs-rho3}.
We use the anticanonical complex $\mathcal{A}_X$
introduced in~\cite{BeHaHuNi} to determine the
minimal discrepancies.
According to~\cite{HaHaSp}*{Thm.~9.17~(i) and~(ii)},
the maximal cells of
$\mathcal{A}_X$ are given in terms of the
columns $v_i$ of the matrix $P$ as 

{\small
\[
\begin{array}{cc}    
\begin{array}{ll}    
\rho=1: & \tilde v^+ = (a+1)e_3, \ \tilde v^- = (b+1)e_3,
\\[3pt]            
& \conv(0,\tilde v^+,v_1), \ \conv(0,v_1,v_2), \ \conv(0,v_2, \tilde v^-),
\\[3pt]
& \conv(0,\tilde v^+,v_3), \ \conv(0,v_3, \tilde v^-),
\\[3pt]
& \conv(0,\tilde v^+,v_3), \ \conv(0,v_3, \tilde v^-),  
\end{array}
&
\begin{array}{c}
\accrhoone
\end{array}
\\    
\\
\begin{array}{ll}    
\rho=2: & \tilde v^+ = \frac{2a+1}{3} e_3, \ \tilde v^- = \frac{2b+2c+1}{3} e_3,
\\[3pt]            
& \conv(0,\tilde v^+,v_1), \ \conv(0,v_1,v_2), \ \conv(0,v_2, \tilde v^-),
\\[3pt]
& \conv(0,\tilde v^+,v_3), \ \conv(0,v_3,v_4),\ \conv(0,v_4, \tilde v^-),
\\[3pt]
& \conv(0,\tilde v^+,v_5), \ \conv(0,v_5, \tilde v^-),  
\end{array}
&
\begin{array}{c}
\accrhotwo
\end{array}
\\            
\\
\begin{array}{ll}    
\rho=3: & \tilde v^+ = \frac{a}{2} e_3, \ \tilde v^- = \frac{b+c+d}{2}e_3,
\\[3pt]            
& \conv(0,\tilde v^+,v_1), \ \conv(0,v_1,v_2), \ \conv(0,v_2, \tilde v^-),
\\[3pt]
& \conv(0,\tilde v^+,v_3), \ \conv(0,v_3,v_4),\ \conv(0,v_4, \tilde v^-),
\\[3pt]
& \conv(0,\tilde v^+,v_5), \ \conv(0,v_5,v_6), \ \conv(0,v_6, \tilde v^-). 
\end{array}
&
\begin{array}{c}
\accrhothree
\end{array}
\end{array}
\]
}

\noindent
Now~\cite{HaHaSp}*{Thm.~9.17~(iii)} tells us that 
the discrepancies of the exceptional divisors $E^+$, $E^-$
given by the rays through $e_3$, $-e_3$ are given
for $\rho= 1,2,3$ by
\[
\frac{1}{a+1}-1, \  \frac{1}{-b-1}-1,
\qquad
\frac{3}{2a+1}-1, \  -\frac{3}{2b+2c+1}-1,
\qquad
\frac{2}{a}-1, \  -\frac{2}{b+c+d}-1.
\]
Moreover, these are obviously the minimal
discrepancies of the canonical resolution.
Inserting the values of $a,b,c,d$ from
Theorems~\ref{thm:fiqs2matrix-rho1},~\ref{thm:fiqs2matrix-rho2}
and~\ref{thm:fiqs2matrix-rho3},
we arrive at the assertion.
\end{proof}

The \emph{Picard index $\mathfrak{p}_X$} of a normal
variety $X$ is the index $[\Cl(X):\Pic(X)]$ of its
Picard group in its divisor class group.
Note that the Gorenstein index always divides the
Picard index.
Bounding the Picard yields finiteness for
del Pezzo surfaces of Picard number one
with torus action~\cite{Sp}; see also~\cite{HaHeSu}
for a higher dimensional analogue in the special
case of divisor class group $\ZZ$.

\goodbreak

\begin{corollary}
\label{cor:picind}
Consider a full intrinsic quadric surface
$X = X(P_\eta)$ with $P_\eta$ as in
Theorem~\ref{thm:fiqs2matrix-rho1},~\ref{thm:fiqs2matrix-rho2}
or~\ref{thm:fiqs2matrix-rho3}.
Then, according to the Picard number $\rho=\rho(X)$,
the Picard index $\mathfrak{p} = \mathfrak{p}_X$
of $X$ is given by
\[
\renewcommand{\arraycolsep}{5pt}
\begin{array}{llcll}
\rho = 1:
\\[5pt]
\mathfrak{p} = \frac{8 \iota^+\iota^-(\iota^+ + \iota^-)}{\gcd(2\iota^+, \, \iota^++\iota^-)},
&
\scriptstyle \eta \ \in \  S_{11}(1,\iota),   
&&
\mathfrak{p} = \frac{4 \iota^+\iota^-(2\iota^+ + \iota^-)}{\gcd(4\iota^+, \, 2\iota^++\iota^-)},
&
\scriptstyle \eta \ \in \  S_{12}(1,\iota),    
\\[8pt]
\mathfrak{p} = \frac{4 \iota^+\iota^-(\iota^+ + 2\iota^-)}{\gcd(2\iota^+, \, \iota^++2\iota^-)},
&
\scriptstyle \eta \ \in \  S_{21}(1,\iota),   
&&
\mathfrak{p} = \frac{2\iota^+\iota^-(\iota^+ + \iota^-)}{\gcd(2\iota^+, \, \iota^++\iota^-)},
&
\scriptstyle \eta \ \in \  S_{22}(1,\iota),    
\end{array}
\]

\[
\renewcommand{\arraycolsep}{4pt}
\begin{array}{llcll}
\rho = 2:
\\[5pt]
\mathfrak{p} = -\frac{c\iota^+\iota^-(\iota^+ + \iota^- + 2c)}{\gcd(2\iota^+, \,  \iota^+ + \iota^-, \,  2c)},
&
\scriptstyle \eta \ \in \  S_{11}(2,\iota),   
&&
\mathfrak{p} = -\frac{3c\iota^+\iota^-(\iota^+ + 3\iota^-+2c)}{\gcd(2\iota^+, \,  \iota^+ + 3\iota^-, \,  2c)},
&
\scriptstyle \eta \ \in \  S_{12}(2,\iota),    
\\[8pt]
\mathfrak{p} = -\frac{3c\iota^+\iota^-(3\iota^+ + \iota^- + 2c)}{\gcd(6\iota^+, \,  3\iota^+ + \iota^-, \,  2c)},
&
\scriptstyle \eta \ \in \  S_{21}(2,\iota),   
&&
\mathfrak{p} = -\frac{9c\iota^+\iota^-(3\iota^+ + 3\iota^-+2c)}{\gcd(6\iota^+, \,  3\iota^+ + 3\iota^-, \,  2c)},
&
\scriptstyle \eta \ \in \  S_{22}(2,\iota),    
\end{array}
\]

\[
\renewcommand{\arraycolsep}{4pt}
\begin{array}{llcll}
\rho = 3:
\\[5pt]
\mathfrak{p} = \frac{cd\iota^+\iota^-(\iota^+ + \iota^- + c +d)}{\gcd(\iota^+, \, \iota^-, \, c, \, d)},
&
\scriptstyle \eta \ \in \  S_{11}(3,\iota),   
&&
\mathfrak{p} = \frac{2cd\iota^+\iota^-(\iota^+ + 2\iota^- + c +d)}{\gcd(\iota^+, \, 2\iota^-, \, c, \, d)},
&
\scriptstyle \eta \ \in \  S_{12}(3,\iota),   
\\[8pt]
\mathfrak{p} = \frac{2cd\iota^+\iota^-(2\iota^+ + \iota^- + c +d)}{\gcd(2\iota^+, \, \iota^-, \, c, \, d)},
&
\scriptstyle \eta \ \in \  S_{21}(3,\iota),   
&&
\mathfrak{p} = \frac{4cd\iota^+\iota^-(2\iota^+ + 2\iota^- + c +d)}{\gcd(2\iota^+, \, 2\iota^-, \, c, \, d)},
&
\scriptstyle \eta \ \in \  S_{22}(3,\iota),
\end{array}
\]

\noindent
where $\iota$ is the Gorenstein index of $X$ and $\iota^\pm$ the local
Gorenstein index of $x^\pm \in X$ and $-c$ the local class group order
of $x_1 \in X$.
In particular, we obtain the following upper and lower bounds:
\[
\begin{array}{ll}
\rho=1: &  \iota \le \mathfrak{p} \le 8 \iota^2,
\\[5pt]
\rho=2: &  \iota \le \mathfrak{p} \le \frac{27}{2} \iota^3(3\iota-1),
\\[5pt]
\rho=3: &  \iota \le \mathfrak{p} \le 2 \iota^2(4\iota-1)^2(2\iota-1). 
\end{array}
\]
\end{corollary}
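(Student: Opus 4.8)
The plan is to reduce to $X = X(P_\eta)$ by Propositions~\ref{prop:fiqs2matrix-rho1},~\ref{prop:fiqs2matrix-rho2} and~\ref{prop:fiqs2matrix-rho3}, and then to read off $\mathfrak{p}_X$ from the local class group orders recorded in Propositions~\ref{prop:fiqs-rho1-props-1},~\ref{prop:fiqs-rho2-props-1} and~\ref{prop:fiqs-rho3-props-1}. The starting point is that a Weil divisor class on $X$ is Cartier if and only if it is locally principal at every singular point, and the singularities of $X$ all sit at the fixed points $x^+,x^-,x_0,x_1,x_2$. Thus $\Pic(X)$ is the kernel of the natural homomorphism
\[
\Cl(X) \ \longrightarrow \ \bigoplus_x \Cl(X,x),
\]
the sum taken over these fixed points, and $\mathfrak{p}_X$ is the order of its image. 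Since $X$ is rational, $\QQ$-factorial and complete, $\Pic(X)$ is a torsion-free subgroup of full rank $\rho$ in $\Cl(X)$, so the above map has finite cokernel $C$ and $\mathfrak{p}_X = (\prod_x \cl(X,x))/|C|$. Following~\cite{HaHaSp}, and confirmed directly from the explicit data below, one has $|C| = |\Cl(X)_{\mathrm{tors}}|$, whence
\[
\mathfrak{p}_X \ = \ \frac{\prod_x \cl(X,x)}{|\Cl(X)_{\mathrm{tors}}|}.
\]

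With this formula the explicit expressions follow by substitution: for each series $S_{ij}$ we insert the local class group orders from the relevant Proposition, the torsion order of $\Cl(X)$ read off from the relevant Construction, and finally the values of $a,b,c,d$ given in Theorems~\ref{thm:fiqs2matrix-rho1},~\ref{thm:fiqs2matrix-rho2} and~\ref{thm:fiqs2matrix-rho3}. For instance, for $\eta \in S_{11}(3,\iota)$ one finds $\cl(X,x^+) = \iota^+$, $\cl(X,x^-) = \iota^-$, $\cl(X,x_0) = \iota^++\iota^-+c+d$, $\cl(X,x_1) = -c$, $\cl(X,x_2) = -d$ and $|\Cl(X)_{\mathrm{tors}}| = \gcd(\iota^+,\iota^-,c,d)$, which yields exactly the stated value of $\mathfrak{p}$; the other eleven cases are analogous and differ only in the bookkeeping of the factors $2$ and $3$ produced by the four shapes of $P_\eta$.

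It then remains to prove the bounds. The lower bound $\iota \le \mathfrak{p}$ is immediate, since the Gorenstein index always divides the Picard index. For the upper bounds I would first eliminate the greatest common divisor in the denominator: writing $\iota^+ = gu$, $\iota^- = gv$ with $g = \gcd(\iota^+,\iota^-)$, $\gcd(u,v)=1$ and $\iota = guv$, the coprimality of $u,v$ together with the parity and $3$-divisibility conditions defining $S_{ij}$ collapses each occurring $\gcd$ to an explicit value; for example $\gcd(2\iota^+,\iota^++\iota^-) = 2g$ on $S_{11}(1,\iota)$, which turns the formula into $\mathfrak{p} = 4\iota(\iota^++\iota^-)$ and hence gives $\mathfrak{p} \le 8\iota^2$ from $\iota^\pm \le \iota$. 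In general each formula simplifies to $\iota$ times a multilinear expression in $\iota^+,\iota^-$ and, for $\rho \ge 2$, in $c$ and $d$, which one maximizes over the region cut out by the inequalities of $S_{ij}$, taking the largest value among the four series.

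This last optimization is the \emph{main obstacle}. For $\rho = 1$ it is short, but for $\rho = 2,3$ the admissible range of $-c$ (and of $-d$) grows linearly in $\iota$, so the numerators become multilinear of degree three or four and their extrema need not be attained in the same series as in the simplest case. The delicate point is to track how the collapsed $\gcd$ depends on $u,v,c,d$ across the four shapes and to compare the resulting four maxima, in order to confirm that the global maximum is $\tfrac{27}{2}\iota^3(3\iota-1)$ for $\rho = 2$ and $2\iota^2(4\iota-1)^2(2\iota-1)$ for $\rho = 3$.
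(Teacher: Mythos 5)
Your proposal follows essentially the same route as the paper: the identity $\mathfrak{p}_X = \bigl(\prod_x \cl(X,x)\bigr)/|\Cl(X)_{\mathrm{tors}}|$ that you assert is exactly Springer's formula, which the paper invokes as \cite{Sp}*{Thm.~1.1} (your attribution to \cite{HaHaSp} is off, but the step is a citation either way), and the explicit expressions then drop out by inserting the local class group orders from Propositions~\ref{prop:fiqs-rho1-props-1}--\ref{prop:fiqs-rho3-props-1}, the torsion orders from the Constructions, and the parameter values from the three Theorems — your sample computation for $S_{11}(3,\iota)$ matches the paper's intermediate formulas. The one place where you overcomplicate matters is the final ``optimization'': the stated upper bounds do not require comparing joint maxima over the regions $S_{ij}$, but follow from bounding each factor separately. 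For instance, for $\rho=3$ one has $\gcd \ge 1$, $\iota^\pm \le \iota$, and the inequalities $2c+d \ge -2\iota^+-2\iota^-$, $d \le -1$, $c \le d$ give $-c \le \tfrac{4\iota-1}{2}$, $-d \le -c$ and $2\iota^++2\iota^-+c+d \le 4\iota-2$, whence $\mathfrak{p} \le 4\cdot\tfrac{(4\iota-1)^2}{4}\cdot\iota^2\cdot(4\iota-2) = 2\iota^2(4\iota-1)^2(2\iota-1)$; the $\rho=2$ bound is obtained the same way. So the ``main obstacle'' you flag is in fact routine, and with that filled in your argument is complete and coincides with the paper's.
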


\begin{proof}
First assume $X = X(P)$ with $P$ from
Construction~\ref{constr:matrix2fiqs-rho1},~\ref{constr:matrix2fiqs-rho2}
or~\ref{constr:matrix2fiqs-rho3}.
Then Springer's formula~\cite{Sp}*{Thm.~1.1} gives us the Picard indices
\[
\textstyle
-\frac{8(a+1)(b+1)(a-b)}{\gcd(2a+2,a-b)},
\qquad
\frac{c(1+2a)(1+2b+2c)(a-b)}{\gcd(1+2a,a-b,c)},
\qquad
-\frac{acd(b+c+d)(a-b))}{\gcd(a,b,c,d)},
\]
according to the possible Picard numbers $\rho=1,2,3$. The assertion is
obtained by inserting the values of $a,b,c,d$ from
Theorems~\ref{thm:fiqs2matrix-rho1},~\ref{thm:fiqs2matrix-rho2}
and~\ref{thm:fiqs2matrix-rho3}.
\end{proof}

\goodbreak

A \emph{K\"ahler-Einstein metric} on a rational
projective del Pezzo surface is a K\"ahler
orbifold metric $g$ such that the associated
K\"ahler form~$\omega_g$ equals
its Ricci form $\mathrm{Ric}(\omega_g)$.
The smooth del Pezzo surfaces with
af K\"ahler-Einstein metric are
$\PP_2$, its blowing
up in $k = 3, \ldots, 8$ points in
general position and $\PP_1 \times \PP_1$;
see~\cites{TiYa,Ti1}.
The case of quasismooth del Pezzo
surfaces coming anticanonically embedded
into a three-dimensional weighted projective
space is understood as well;
see~\cites{KoJo,Arau,ChPaSh1,ChPaSh2}.
We settle the case of full intrinsic quadric surfaces.

\goodbreak

\begin{corollary}
\label{cor:ke-metrics}
Let $X$ be a complex full intrinsic quadric surface
admitting a K\"ahler-Einstein metric.
Then $X \cong X(P)$ for precisely one $P$
from the following:

{\small
\[
\begin{array}{lcl}
\begin{array}{l}
\rho = 1, \ 2 \nmid \iota:
\\
\  
\end{array}
&\hspace*{1.2cm}&
\begin{array}{l}
\rho = 3,
\quad
2 \nmid \iota,
\quad
-2\iota \, \le \, 2c +d,
\\
c \, \le \, d \, \le \, -1,
\
c+d \, \le \, -\iota-1:
\end{array}
\\[10pt]
\ 
\left[
\begin{array}{cccc}
-1 & -1 & 2 & 0 
\\
-1 & -1 & 0 & 2
\\
\iota-1 & -\iota-1 & 1 & 1 
\end{array}
\right],
&\hspace*{1.2cm}&
\ 
\left[
\begin{array}{cccccc}
-1 & -1 & 1 & 1 & 0 & 0
\\
-1 & -1 & 0 & 0 & 1 & 1
\\
\iota & -\iota -c-d & 0 & c & 0 & d 
\end{array}
\right],
\\[20pt]
%
%
%
%
\begin{array}{l}
\rho = 1, \ 4 \mid \iota:
\\
\
\end{array}
&\hspace*{1.2cm}&
\begin{array}{l}
\rho=3,
\quad
-4\iota \, \le \, 2c +d, \ c \le d \, \le \, -1,
\\
c+d \, \le \, -2\iota -1:
\end{array}
\\[10pt]
\ 
\left[
\begin{array}{cccc}
-1 & -1 & 2 & 0 
\\
-1 & -1 & 0 & 2
\\
\frac{\iota}{2}-1 & -\frac{\iota}{2}-1 & 1 & 1 
\end{array}
\right],
&\hspace*{1.2cm}&
\ 
\left[
\begin{array}{cccccc}
-1 & -1 & 1 & 1 & 0 & 0
\\
-1 & -1 & 0 & 0 & 1 & 1
\\
2\iota & -2\iota -c -d & 0 & c & 0 & d 
\end{array}
\right],
\end{array}    
\]
}

\medskip
\noindent
where $\rho$ denotes the Picard number
and $\iota$ the Gorenstein index of $X(P)$.
Conversely, each $X(P)$ with $P$ from the
above list admits a K\"ahler-Einstein metric.
\end{corollary}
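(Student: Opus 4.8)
The plan is to convert the analytic question into K-stability and then to feed the explicit matrices of Theorems~\ref{thm:fiqs2matrix-rho1},~\ref{thm:fiqs2matrix-rho2} and~\ref{thm:fiqs2matrix-rho3} into a combinatorial stability criterion. By Proposition~\ref{prop:fiqs2fano} every full intrinsic quadric surface $X$ is a log del Pezzo surface, so the Yau--Tian--Donaldson correspondence for log Fano surfaces applies: $X$ admits a K\"ahler-Einstein metric if and only if it is K-polystable. As $X = X(P)$ carries the effective $\KK^*$-action of Construction~\ref{constr:matrix2fiqs-rho1} and its Picard number two and three analogues, the equivariant reduction of K-stability lets us test polystability on $\KK^*$-invariant special test configurations alone. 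For a rational projective $\KK^*$-surface these degenerations, and the associated Donaldson--Futaki invariants, are encoded by the combinatorial data of $P$ together with the intersection numbers of~\cite{HaHaSp}*{Summary~7.7} already exploited in Corollaries~\ref{cor:anticandeg} and~\ref{cor:maxlc}.

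I would organise the computation in two layers. The first is the torus-direction obstruction: since $\KK^*$ is one-dimensional, the Futaki invariant is a single number, the barycenter of the Duistermaat--Heckman measure along the moment interval $[m^-,m^+]$, whose endpoints are the slopes $m^+, m^-$ read off from $P$. Its vanishing is necessary for a K\"ahler-Einstein metric, and a direct evaluation shows that it forces the two elliptic fixed points $x^+, x^-$ into the \emph{same} combinatorial type with equal local Gorenstein indices $\iota^+ = \iota^-$; equivalently the defining bound $a+1 \le -b-1$ (resp.\ $a \le -b-c-1$, resp.\ $a \le -b-c-d$) is saturated and the moment interval is symmetric. Running this through the four series of each theorem keeps only the symmetric types $S_{11}$ and $S_{22}$ along their diagonal $\iota^+ = \iota^-$, which is exactly what produces the matrices displayed for $\rho = 1$ and $\rho = 3$; the mixed types $S_{12}, S_{21}$ have ends of different type and hence non-vanishing Futaki invariant. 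The second layer is genuine K-polystability against the remaining, horizontal test configurations coming from the fibre structure parametrised by $c,d$.

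The case analysis then splits as follows. In Picard number one there are no free parameters $c,d$, so the symmetric surfaces are automatically K-polystable, the exchange of $x^+$ and $x^-$ providing the symmetry that both forces Futaki vanishing and, by the symmetric-Fano principle, produces the metric; this leaves precisely the two matrices with $\iota^+ = \iota^-$ realised in the parity class $S_{11}$ (for odd $\iota$) and $S_{22}$ (for $4 \mid \iota$). In Picard number three the orders $-c,-d$ of $x_1,x_2$ survive as moduli, and testing the horizontal degenerations produces exactly the displayed inequalities $-2\iota \le 2c+d$ and $c+d \le -\iota-1$ (resp.\ $-4\iota \le 2c+d$ and $c+d \le -2\iota-1$) on top of $c \le d \le -1$, cutting out the stability region. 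In Picard number two the single square variable $T_5^2$ forces the two ends to be of distinct type under the oddness of $\iota^\pm$ imposed by Theorem~\ref{thm:fiqs2matrix-rho2}, so the barycenter can never be centred and no surface qualifies.

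The main obstacle is the sufficiency direction: establishing honest K-polystability, not merely vanishing of the Futaki invariant, for the surviving families, and pinning down the exact stability region in Picard number three. Once the Donaldson--Futaki invariants are written combinatorially in $a,b,c,d$, the Futaki computation selecting $\iota^+ = \iota^-$ and the parity bookkeeping that empties Picard number two are routine; the delicate point is to verify that no further $\KK^*$-equivariant degeneration destabilises the symmetric surfaces and that the $(c,d)$-inequalities are simultaneously necessary and sufficient for polystability.
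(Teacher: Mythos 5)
Your overall strategy---reduce to K-polystability and test it combinatorially against the $\KK^*$-equivariant degenerations encoded by $P$---is the same as the paper's, which invokes the criterion of \cite{HaHaSu}: for each \emph{special} toric degeneration $\mathcal{X}_\kappa \to \CC$ one computes the moment polytope $\mathcal{B}_\kappa$ of the central fiber and requires its barycenter $b_\kappa$ to satisfy $b_{\kappa,1}=0$ \emph{and} $b_{\kappa,2}>0$. However, your two-layer decomposition into ``Futaki vanishing'' plus separate ``horizontal test configurations'' does not match this criterion, and the mismatch produces two genuine gaps. First, your exclusion of Picard number two is argued incorrectly: you claim the barycenter ``can never be centred'' because the two elliptic ends are of distinct type, but in fact $b_{2,1}=0$ is perfectly achievable (it amounts to $a+b+c+1=0$). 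The actual obstruction is that the barycenter computes to $b_2 = \bigl(-2\tfrac{a+b+c+1}{(2a+1)(2b+2c+1)}, -\tfrac{a+b+c+1}{(2a+1)(2b+2c+1)}\bigr)$, so $b_{2,1}=2\,b_{2,2}$ identically; centering the first coordinate forces the second to vanish, violating the strict positivity requirement. Your stated mechanism would not survive scrutiny, whereas the correct one is a one-line consequence of the explicit barycenter formula.

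Second, the sufficiency direction is not established. For $\rho=1$ you appeal to an unnamed ``symmetric-Fano principle,'' and for $\rho=3$ you assert without derivation that the horizontal degenerations ``produce exactly the displayed inequalities.'' In the paper both points are settled by the same barycenter computation: for $\rho=1$ one checks that $b_{\kappa,1}=0$ forces $b=-2-a$ and then $b_{\kappa,2}=1/6>0$ automatically; for $\rho=3$ there are three special degenerations $\kappa=0,1,2$, the common vanishing condition is $d=-a-b-c$, and the three positivity conditions $b_{\kappa,2}>0$ translate into $b>0$, $b+d<0$, $a+d>0$, which after substituting the entries from Theorem~\ref{thm:fiqs2matrix-rho3} become the displayed bounds on $c,d$. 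Without carrying out these computations (or citing a theorem that genuinely delivers polystability from symmetry alone), your argument establishes at best necessity of $\iota^+=\iota^-$ and leaves both the positive existence claims and the exact shape of the $(c,d)$-region unproven. You would need to identify the special degenerations via \cite{HaHaSu}*{Prop.~5.3}, compute all moment polytopes and barycenters, and verify both the vanishing and the positivity conditions in each case.
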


\begin{proof}
We may assume $X = X(P)$ with $P$ as in
Construction~\ref{constr:matrix2fiqs-rho1},~\ref{constr:matrix2fiqs-rho2}
or~\ref{constr:matrix2fiqs-rho3}.
This allows us to use the combinatorial $K$-stability
criterion for K\"ahler-Einstein metrics
for $X(P)$ provided by~\cite{HaHaSu};
see also~\cite{IlSu}.
The first step is to pick from the
toric degenerations $\mathcal{X}_\kappa \to \CC$
of $X(P)$, where $\kappa = 0,1,2$,
provided by~\cite{HaHaSu}*{Constr.~4.1}
the special ones, that means, those with
a normal central fiber $\mathcal{X}_{\kappa,0}$;
use~\cite{HaHaSu}*{Prop.~5.3}.
Then, with the aid of~\cite{HaHaSu}*{Prop.~5.6},
one computes the Fano polygons
$\mathcal{A}_\kappa$ of $\mathcal{X}_{\kappa,0}$, 
their duals $\mathcal{B}_\kappa$,
also called moment polytopes,
and the barycenters $b_\kappa$
of the $\mathcal{B}_\kappa$.
Finally,~\cite{HaHaSu}*{Thm.~6.2}
tells us that $X(P)$ admits a K\"ahler-Einstein metric
if and only if $b_{\kappa,1}=0$ and $b_{\kappa,2}>0$
hold for all special $\kappa$.

For Picard number $\rho=1$, consider $X = X(P)$
with $P$ as in Construction~\ref{constr:matrix2fiqs-rho1}.
In this setting, we obtain we obtain special
toric degenerations for $\kappa=1,2$ and we compute
for both cases
\[
\begin{array}{lcl}
\mathcal{A}_\kappa
& = &
\conv\left( (1,-2), \, (1+2a,2), \, (1+2b,b) \right),
\\[5pt]
\mathcal{B}_\kappa
& = &
\conv\left( (0,\frac{1}{2}), \, (-\frac{1}{1+b}, \frac{b}{2+2b}), \, (-\frac{1}{1+a}), \frac{a}{2+2a}) \right),
\\[5pt]
b_\kappa
& = & 
\left(-\frac{2+a+b}{3(1+a)(1+b)}, \, \frac{ab-1}{6(1+a)(1+b)} \right).
\end{array}
\]
Thus, $b_{\kappa,1}$ vanishes if and only if $b = -2-a$.
In this case, we have $b_{\kappa,2} = 1/6 > 0$.
Comparing with Theorem~\ref{thm:fiqs2matrix-rho1},
we arrive at the shapes given by $S_{11}(1,\iota)$ and
$S_{22}(1,\iota)$ with $\iota^+ = \iota^-$, where
$S_{12}(1,\iota)$, $S_{21}(1,\iota)$ are ruled out
by $\iota^+$, $\iota^-$ being odd.

For Picard number $\rho=2$, take $P$ as
in Construction~\ref{constr:matrix2fiqs-rho2}.
Then  $\kappa = 2$ yields a special degeneration
and we end up with barycenter $b_2 = (0,0)$, as
soon as $b_{2,1}=0$. Thus, none of the $X(P)$
admits a K\"ahler-Einstein metric.
For completeness we list the intermediate steps:
\[
\begin{array}{lcl}
\mathcal{A}_2
& = &
\conv\left( (1,-2), \, (a,1), \, (b+c,1) \right),
\\[5pt]
\mathcal{B}_2
& = &
\conv\left( (-\frac{3}{2a+1}, \frac{a-1}{2a+1}), \, (0,-1), \, (-\frac{3}{2b+2c+1}), \frac{b+c-1}{2b+2c+1}) \right),
\\[5pt]
b_2
& = & 
\left(-2\frac{a+b+c+1}{(2a+1)(2b+2c+1)}, \, -\frac{a+b+c+1}{(2a+1)(2b+2c+1)} \right).
\end{array}
\]

We turn to Picard number $\rho=3$. Let $X(P)$ arise from
Construction~\ref{constr:matrix2fiqs-rho3}.
Then we have special toric degenerations for
$\kappa=1,2,3$. The computation results are 

\[
\begin{array}{lcl}
\mathcal{A}_0
& = &
\conv\left( (0,1), \, (c+d,1), \, (b,-1), \, (a,-1) \right),
\\[7pt]
\mathcal{B}_0
& = &
\conv\left( (0,-1), \, \left( -\frac{2}{b+c+d}, \frac{c+d-b}{b+c+d} \right), \, (0,1), \, \left( -\frac{2}{a}, -1 \right) \right),
\\[7pt]
b_0
& = & 
\left( -\frac{2(a+b+c+d)}{3a(b+c+d)}, \, \frac{(b+2c+2d-a)b+(c+a+d)(c+d))}{3(a-b-c-d)(b+c+d)} \right),
\\[14pt]
\mathcal{A}_1
& = &
\conv\left( (a,1), \, (b+d,1), \, (c,-1), \, (0,-1) \right),
\\[7pt]
\mathcal{B}_1
& = &
\conv\left( (0,-1), \, \left( -\frac{2}{b+c+d}, \frac{b+d-c}{b+c+d} \right), \, (0,1), \, \left( -\frac{2}{a}, 1 \right)  \right),
\\[7pt]
b_1
& = & 
\left( -\frac{2(a+b+c+d)}{3a(b+c+d)}, \, -\frac{(a-b-2c-2d)b-(a+c+2d)c+(a-d)d) }{3(a-b-c-d)(b+c+d)} \right),
\\[14pt]
\mathcal{A}_2
& = &
\conv\left( (a,1), \, (b+c,1), \, (d,-1), \, (0,-1) \right),
\\[7pt]
\mathcal{B}_2
& = &
\conv\left( (0,-1), \, \left( -\frac{2}{b+c+d}, \frac{b+c-d}{b+c+d} \right), \, (0,1), \, \left( -\frac{2}{a}, 1 \right) \right),
\\[7pt]
b_2
& = & 
\left( -\frac{2(a+b+c+d)}{3a(b+c+d)}, \, -\frac{(a-b-2c-2d)b - (a+c-2d)c - (a+d)d }{ 3(a-b-c-d)(b+c+d) } \right).
\end{array}
\]

\medskip
\noindent
We conclude that $X(P)$ admits a K\"ahler-Einstein metric if and only if we have
$d=-a-b-c$, reflecting $b_{\kappa,1}=0$, and
\[
b > 0,   \quad
b+d < 0, \quad
a+d > 0, \quad
\]
reflecting $b_{\kappa,2}>0$. Substituting $a$, $b$ with the
corresponding entries from Theorem~\ref{thm:fiqs2matrix-rho3},
we arrive at
\[
\iota^+ \ = \ \iota^-,
\qquad
\iota^+ \ = \ 2\iota^-,
\qquad
2\iota^+ \ = \ \iota^-,
\qquad
\iota^+ \ = \ \iota^-,  
\]
according to the shapes defined by $S_{11}(3,\iota)$, $S_{12}(3,\iota)$, $S_{21}(3,\iota)$ and $S_{22}(3,\iota)$.
Note that $S_{12}(3,\iota)$, $S_{21}(3,\iota)$ are ruled out by $\iota^+$, $\iota^-$ being odd,
respectively.
\end{proof}


\begin{bibdiv}
\begin{biblist}

  \bib{Al}{article}{
   author={Alexeev, Valery},
   title={Boundedness and $K^2$ for log surfaces},
   journal={Internat. J. Math.},
   volume={5},
   date={1994},
   number={6},
   pages={779--810},
   issn={0129-167X},
}

\bib{Arau}{article}{
 author = {Araujo, Carolina},
 title = {K\"ahler-Einstein Metrics for Some Quasi-Smooth
          Log Del Pezzo Surfaces},
 journal = {Transactions of the American Mathematical Society},
 number = {11},
 pages = {4303--4312},
 publisher = {American Mathematical Society},
 volume = {354},
 year = {2002},
 issn = {00029947},
}

\bib{ArDeHaLa}{book}{
   author={Arzhantsev, Ivan},
   author={Derenthal, Ulrich},
   author={Hausen, J\"urgen},
   author={Laface, Antonio},
   title={Cox rings},
   series={Cambridge Studies in Advanced Mathematics},
   volume={144},
   publisher={Cambridge University Press, Cambridge},
   date={2015},
   pages={viii+530},
}

\bib{BeHaHuNi}{article}{
   author={Bechtold, Benjamin},
   author={Hausen, J\"{u}rgen},
   author={Huggenberger, Elaine},
   author={Nicolussi, Michele},
   title={On terminal Fano 3-folds with 2-torus action},
   journal={Int. Math. Res. Not. IMRN},
   date={2016},
   number={5},
   pages={1563--1602},
   issn={1073-7928},
}

\bib{BeHa}{article}{
   author={Berchtold, Florian},
   author={Hausen, J\"{u}rgen},
   title={Cox rings and combinatorics},
   journal={Trans. Amer. Math. Soc.},
   volume={359},
   date={2007},
   number={3},
   pages={1205--1252},
   issn={0002-9947},
}

\bib{Bou}{article}{
   author={Bourqui, David},
   title={La conjecture de Manin g\'{e}om\'{e}trique
          pour une famille de quadriques intrins\`eques},
   language={French, with English and French summaries},
   journal={Manuscripta Math.},
   volume={135},
   date={2011},
   number={1-2},
   pages={1--41},
   issn={0025-2611},
}

\bib{ChPaSh1}{article}{
   author={Cheltsov, Ivan},
   author={Park, Jihun},
   author={Shramov, Constantin},
   title={Exceptional del Pezzo hypersurfaces},
   journal={J. Geom. Anal.},
   volume={20},
   date={2010},
   number={4},
   pages={787--816},
   issn={1050-6926},
}

\bib{ChPaSh2}{article}{
   author={Cheltsov, Ivan},
   author={Park, Jihun},
   author={Shramov, Constantin},
   title={Delta invariants of singular del Pezzo surfaces},
   journal={J. Geom. Anal.},
   volume={31},
   date={2021},
   number={3},
   pages={2354--2382},
   issn={1050-6926},
 }

\bib{CoLiSc}{book}{
   author={Cox, David A.},
   author={Little, John B.},
   author={Schenck, Henry K.},
   title={Toric varieties},
   series={Graduate Studies in Mathematics},
   volume={124},
   publisher={American Mathematical Society, Providence, RI},
   date={2011},
   pages={xxiv+841},
}

\bib{Dan}{article}{
   author={Danilov, V. I.},
   title={The geometry of toric varieties},
   language={Russian},
   journal={Uspekhi Mat. Nauk},
   volume={33},
   date={1978},
   number={2(200)},
   pages={85--134, 247},
}

\bib{FaHa}{article}{
   author={Fahrner, Anne},
   author={Hausen, J\"{u}rgen},
   title={On intrinsic quadrics},
   journal={Canad. J. Math.},
   volume={72},
   date={2020},
   number={1},
   pages={145--181},
   issn={0008-414X},
}

\bib{Ful}{book}{
   author={Fulton, William},
   title={Introduction to toric varieties},
   series={Annals of Mathematics Studies},
   volume={131},
   note={The William H. Roever Lectures in Geometry},
   publisher={Princeton University Press, Princeton, NJ},
   date={1993},
   pages={xii+157},
   isbn={0-691-00049-2},
}


\bib{HaHaSp}{article}{
  author = {H\"{a}ttig, Daniel},
  author = {Hausen, J\"{u}rgen},
  author = {Springer, Justus},
  title = {Classifying log del Pezzo surfaces with torus action},
  year = {2023},
  eprint={arXiv:2302.03095},
}

\bib{LDPDB}{article}{
  author = {H\"{a}ttig, Daniel},
  author = {Hausen, J\"{u}rgen},
  author = {Springer, Justus},
  title = {A database for log del Pezzo surfaces with torus action},
  year = {2023},
  eprint={https://www.math.uni-tuebingen.de/forschung/algebra/ldp-database/index.html},
  }

\bib{HaHaSu}{article}{
  author = {H\"{a}ttig, Daniel},
  author = {Hausen, J\"{u}rgen},
  author = {S\"{u}\ss, Hendrik},
  title={Log del Pezzo $\CC^*$-surfaces, K\"{a}hler-Einstein metrics,
         K\"{a}hler-Ricci solitons and Sasaki-Einstein metrics},
  eprint={arXiv:2306.03796},
  }

\bib{HaHe}{article}{
   author={Hausen, J\"{u}rgen},
   author={Herppich, Elaine},
   title={Factorially graded rings of complexity one},
   conference={
      title={Torsors, \'{e}tale homotopy and applications to rational points},
   },
   book={
      series={London Math. Soc. Lecture Note Ser.},
      volume={405},
      publisher={Cambridge Univ. Press, Cambridge},
   },
   date={2013},
   pages={414--428},
}

\bib{HaHeSu}{article}{
   author={Hausen, J\"{u}rgen},
   author={Herppich, Elaine},
   author={S\"{u}\ss, Hendrik},
   title={Multigraded factorial rings and Fano varieties with torus action},
   journal={Doc. Math.},
   volume={16},
   date={2011},
   pages={71--109},
   issn={1431-0635},
}
  

\bib{HaHiWr}{article}{
   author={Hausen, J\"{u}rgen},
   author={Hische, Christoff},
   author={Wrobel, Milena},
   title={On torus actions of higher complexity},
   journal={Forum Math. Sigma},
   volume={7},
   date={2019},
   pages={e38},
}

\bib{HaKe}{article}{
   author={Hausen, J\"{u}rgen},
   author={Keicher, Simon},
   title={A software package for Mori dream spaces},
   journal={LMS J. Comput. Math.},
   volume={18},
   date={2015},
   number={1},
   pages={647--659},
}

\bib{HaKeLa}{article}{
   author={Hausen, J\"{u}rgen},
   author={Keicher, Simon},
   author={Laface, Antonio},
   title={Computing Cox rings},
   journal={Math. Comp.},
   volume={85},
   date={2016},
   number={297},
   pages={467--502},
   issn={0025-5718},
}

\bib{HaSu}{article}{
   author={Hausen, J\"{u}rgen},
   author={S\"{u}\ss , Hendrik},
   title={The Cox ring of an algebraic variety with torus action},
   journal={Adv. Math.},
   volume={225},
   date={2010},
   number={2},
   pages={977--1012},
   issn={0001-8708},
}


\bib{Hi}{article}{
   author={Hische, Christoff},
   title={On canonical Fano intrinsic quadrics},
   journal={Glasg. Math. J.},
   volume={65},
   date={2023},
   number={2},
   pages={288--309},
   issn={0017-0895},
}

\bib{IlSu}{article}{
   author={Ilten, Nathan},
   author={S\"{u}\ss, Hendrik},
   title={K-stability for Fano manifolds with torus action of complexity~1},
   journal={Duke Math. J.},
   volume={166},
   date={2017},
   number={1},
   pages={177--204},
   issn={0012-7094},
}

\bib{KoJo}{article}{
   author={Johnson, J. M.},
   author={Koll\'{a}r, J.},
   title={K\"{a}hler-Einstein metrics on log del Pezzo surfaces in weighted
   projective 3-spaces},
   language={English, with English and French summaries},
   journal={Ann. Inst. Fourier (Grenoble)},
   volume={51},
   date={2001},
   number={1},
   pages={69--79},
   issn={0373-0956},
}

	
\bib{OrWa}{article}{
   author={Orlik, P.},
   author={Wagreich, P.},
   title={Algebraic surfaces with $k\sp*$-action},
   journal={Acta Math.},
   volume={138},
   date={1977},
   number={1-2},
   pages={43--81},
   issn={0001-5962},
}

\bib{Sp}{article}{
  author = {Springer, Justus},
  title = {The Picard index of a surface with torus action},
  year = {2023},
  eprint={arXiv:2308.08879},
}

\bib{Ti1}{article}{
   author={Tian, G.},
   title={On Calabi's conjecture for complex surfaces with positive first
   Chern class},
   journal={Invent. Math.},
   volume={101},
   date={1990},
   number={1},
   pages={101--172},
   issn={0020-9910},
}

\bib{TiYa}{article}{
   author={Tian, Gang},
   author={Yau, Shing-Tung},
   title={K\"{a}hler-Einstein metrics on complex surfaces with $C_1>0$},
   journal={Comm. Math. Phys.},
   volume={112},
   date={1987},
   number={1},
   pages={175--203},
 }

\end{biblist}
\end{bibdiv}

\

\end{document}